\theoremstyle{plain}
\newtheorem{theorem}{Theorem}[section]
\newtheorem{def-thm}[theorem]{Definition-Theorem}
\newtheorem{lemma}[theorem]{Lemma}
\newtheorem*{thai}{Theorem I}
\newtheorem*{thaii}{Theorem II}
\newtheorem*{thaiii}{Theorem III}
\newtheorem*{thaiv}{Theorem IV}
\newtheorem*{thav}{Theorem V}
\newtheorem*{thavi}{Theorem VI}
\newtheorem*{thavii}{Theorem VII}
\theoremstyle{definition}
\def\min{\mathop{\mathrm{min}}}
\begin{document}
\title[Fermat functional equations over Riemann surfaces]{Fermat functional equations over Riemann surfaces}
\author[X.J. Dong and L.W. Liao and K. Liu] {Xianjing Dong and Liangwen Liao and Kai Liu}

\address{Academy of Mathematics and Systems Sciences \\ Chinese Academy of Sciences \\ Beijing, 100190, P.R. China}
\email{xjdong@amss.ac.cn}

\address{Department of Mathematics \\ Nanjing University \\ Nanjing, 210093, P.R. China}
\email{maliao@nju.edu.cn}

\address{Department of Mathematics \\ Nanchang University \\ Nanchang, 330031, P.R. China}
\email{liukai@ncu.edu.cn}

  %\thanks{This work was partially supported by the
%NSFC (No. 11301260,11101201), the NSF of Jiangxi (No.
%20132BAB211003) and the YFED of Jiangxi (No. GJJ13078) of China.}

\subjclass[2010]{30D05, 30D35.} \keywords{Fermat functional equations;  Meromorphic solutions; Riemann surfaces; Jet differentials; Vanishing Theorem; Logarithmic Derivative Lemma}
%\thanks{The second author was supported in part by the Simon Foundation (Grant No. 531604)}
\date{}
\maketitle \thispagestyle{empty} \setcounter{page}{1}

\begin{abstract} We investigate the existence of non-trivial holomorphic and meromorphic solutions of Fermat functional equations over an  open Riemann surface $S$.
When $S$ is hyperbolic, we prove that any $k$-term Fermat functional equation always exists non-trivial holomorphic and meromorphic solution. 
When $S$ is a general  open Riemann surface,  
we  prove that every non-trivial holomorphic or meromorphic solution  satisfies a growth condition, provided that  the power exponents of  the equations are bigger  than some certain positive integers.
\end{abstract}

\vskip\baselineskip

\setlength\arraycolsep{2pt}

\section{Introduction}

It is   natural to study holomorphic and meromorphic solutions of  a $k$-term Fermat functional equation
\begin{equation}\label{asdf}
f_1^n+\cdots+f_k^n=1
\end{equation}
over an open Riemann surface. This is a  generalization of  Fermat functional equations over $\mathbb C$ which were treated  by  many researchers such as Baker \cite{ba}, Gross \cite{gros, gro},  Gundersen-Tohge \cite{gund, gun, gun-toh},  Hayman \cite{hay},  Iyer \cite{iye},  Ishizaki \cite{ish}, Lehmer \cite{leh},  Toda \cite{tod} and Yang \cite{yang}, etc.
We first  review some important developments in the $\mathbb C$-case. When $k=2,$ Iyer \cite{iye} proved the non-existence of non-trivial holomorphic solutions for $n\geq3$ and proved that all holomorphic solutions are of the form $f_1=\cos\circ\alpha, f_2=\sin\circ\alpha$ for $n=2,$ here $\alpha$ is a holomorphic function on $\mathbb C.$ Gross \cite{gros} proved the non-existence of 
non-trivial meromorphic solutions for $n\geq4$ and proved that all meromorphic solutions are of the form $f_1=2\beta/(1+\beta^2), f_2=(1-\beta^2)/(1+\beta^2)$ for $n=2,$ here $\beta$ is a meromorphic function on $\mathbb C.$ Baker \cite{ba} proved that every meromorphic solution is of the form $f_1=p\circ\alpha, f_2=q\circ\alpha,$ here $\alpha$ is a holomorphic function on $\mathbb C$ and  
$$p=\frac{1}{2\wp}\Big(1-3^{-1/2}\wp'\Big), \ \  \
 q=\frac{\varpi}{2\wp}\Big(1+3^{-1/2}\wp'\Big),$$
 where $\varpi$ is a cube root of unity, $\wp$ is a Weierstrass elliptic function satisfying  $(\wp')^2=4\wp^3-1.$ When $k=3,$ Hayman \cite{hay} proved the non-existence of non-trivial holomorphic solutions for $n\geq7$  and  meromorphic solutions for  $n\geq9.$ Some examples of non-trivial holomorphic solutions for $2\leq n\leq 5$ as well as  
  meromorphic solutions for $2\leq n\leq 6$ were constructed by Gundersen-Tohge \cite{gund, gun, gun-toh},  Green \cite{green} and Lehmer \cite{leh}, etc. However, up to now, we still don't know  that whether there exist  non-trivial holomorphic solutions when $n=6$ and non-holomorphic meromorphic solutions when $n=7,8.$

In this paper, we treat Fermat functional equations over Riemann surfaces from a geometric point of view. More specifically,  one describes the  existence of non-trivial holomorphic and meromorphic solutions via Gauss curvature.

Let $S$ be an open (connected) Riemann surface. Due to the uniformization theorem,  the (analytic) universal covering $\tilde S$ of $S$ is either $\mathbb C$ or $\mathbb D.$
Thus, one can  endow $S$ with a complete  Hermitian metric $ds^2=2gdzd\bar{z}$   such that  
 the 
   Gauss curvature
$K_S\leq0$  associated to $g,$  here $K_S$ is given  by
$$K_S=-\frac{1}{4}\Delta_S\log g=-\frac{1}{g}\frac{\partial^2\log g}{\partial z\partial\bar z}.$$
\ \ \ \  Fix $o\in S$ as a reference point. Denote by $D_o(r)$  the geodesic disc centered at $o$ with radius $r,$ and by $\partial D_o(r)$  the boundary of $D_o(r).$
By Sard's theorem, $\partial D_o(r)$ is a submanifold of $S$ for almost all $r>0.$
Set
\begin{equation}\label{kappa}
  \kappa(t)=\min\big\{K_S(x): x\in \overline{D_o(t)}\big\},
\end{equation}
which gives a non-positive and decreasing and continuous function  on $[0,\infty).$  
Let $g_r(o,x)$  be the Green function of $\Delta_S/2$ for  $D_o(r)$ with Dirichlet boundary condition and a pole at $o$, namely
$$-\frac{1}{2}\Delta_{S}g_r(o,x)=\delta_o(x), \ x\in D_o(r); \ \ g_r(o,x)=0, \ x\in \partial D_o(r).$$
For an integer $\nu$ with $1\leq\nu\leq k,$ define
$$\mathfrak T_{f_1,\cdots, f_\nu}(r):=\frac{1}{4}\int_{D_o(r)}g_r(o,x)\Delta_S\log(1+ |f_1(x)|^2+\cdots+|f_\nu(x)|^2)dV(x),$$
which describes  the growth of $f_1,\cdots,f_\nu.$

Firstly, we investigate the  equation (\ref{asdf}) over $S$. 
\begin{thai}\label{sss}  If $S$ is hyperbolic, then there  exist  non-trivial  holomorphic and meromorphic  solutions of  $(\ref{asdf})$ 
\end{thai}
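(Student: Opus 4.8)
To prove Theorem I the plan is to recast the problem as one about holomorphic maps from $S$ into a Fermat curve. A $2$-term solution gives a $k$-term solution of $(\ref{asdf})$ by setting $f_3=\cdots=f_k=0$ (and a genuinely $k$-term solution either by splitting one function further, or --- when $k$ is large relative to $n$ --- by composing an arbitrary non-constant $h\in\mathcal{O}(S)$ with a linear or rational subvariety of $\{x_1^n+\cdots+x_k^n=1\}\subset\mathbb{C}^k$). So it suffices to treat $k=2$, and $n\le 2$ is elementary ($f_1=\cos\circ h,\ f_2=\sin\circ h$ for $n=2$, valid over any open Riemann surface). Assume $n\ge 3$. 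Let $\bar F_n\subset\mathbb{P}^2$ be the smooth curve $x_1^n+x_2^n=x_0^n$, of genus $(n-1)(n-2)/2$, with affine part $F_n=\{x_1^n+x_2^n=1\}$ and finite set of points at infinity $\bar F_n\setminus F_n$. Then a meromorphic solution of $f_1^n+f_2^n=1$ over $S$ is exactly a holomorphic map $\phi=(\phi_1,\phi_2):S\to\bar F_n$, a holomorphic solution is such a map avoiding $\bar F_n\setminus F_n$, and the solution is non-trivial precisely when $\phi$ is non-constant. (Equivalently, since $S$ is Stein and all its holomorphic line bundles are trivial, it is enough to produce a non-constant $f_1\in\mathcal{O}(S)$ meeting each $n$-th root of unity with multiplicity divisible by $n$, subject to the vanishing of a monodromy class in $H^1(S;\mathbb{Z}/n)$; but the map-theoretic statement is the clean one.) Thus Theorem I reduces to: \emph{every hyperbolic open Riemann surface admits a non-constant holomorphic map into $F_n$, hence into $\bar F_n$.}

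The construction of this map is the heart of the matter and the place where hyperbolicity of $S$ is indispensable. I would argue through uniformization: write $S=\mathbb{D}/\Gamma$ and $\bar F_n=\mathbb{D}/\bar\Gamma_n$ with $\Gamma,\bar\Gamma_n$ Fuchsian, and exhibit a non-trivial homomorphism $\rho:\Gamma\to\bar\Gamma_n$ carried by a $\rho$-equivariant holomorphic map $\mathbb{D}\to\mathbb{D}$; this descends to a non-constant holomorphic map $S\to\bar F_n$, and a suitable $\rho$ (or a small deformation of it) keeps the image off the finitely many points at infinity, producing the map into $F_n$ as well. When $S$ happens to carry a non-constant bounded holomorphic function $h:S\to\mathbb{D}$, one shortcuts all of this by composing $h$ with the universal covering $\mathbb{D}\to\bar F_n$.

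The main obstacle is therefore to produce a non-constant holomorphic map from a completely general hyperbolic $S$ into a Riemann surface of genus $\ge 2$ (which $\bar F_n$ is once $n\ge 4$). This is exactly what cannot be done over the two non-hyperbolic open surfaces: over $\mathbb{C}$ or $\mathbb{C}^*$ every holomorphic map to such a curve is constant (Liouville, via the universal covers), and this is the source of the classical non-existence of non-trivial Fermat solutions over $\mathbb{C}$ in precisely these ranges. So Theorem I amounts to the claim that hyperbolic surfaces are supple enough to carry such maps, and carrying out that construction in full generality is the crux.
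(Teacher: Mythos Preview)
Your proposal leaves the key step unfinished: you identify the ``crux'' as producing a non-constant holomorphic map $S\to\bar F_n$ via a $\rho$-equivariant map $\mathbb D\to\mathbb D$, but you do not carry it out. More seriously, after your reduction to $k=2$ this step is not merely hard but impossible in the generality you aim for. For $n\ge 4$ the curve $\bar F_n$ is compact of genus $(n-1)(n-2)/2\ge 3$, hence Kobayashi hyperbolic, so by the big-Picard extension theorem any holomorphic map from a punctured disk into $\bar F_n$ extends across the puncture. Taking $S=\mathbb C\setminus\{0,1\}$ (which has $\tilde S=\mathbb D$), every holomorphic $S\to\bar F_n$ then extends to $\mathbb P^1\to\bar F_n$ and is constant by Riemann--Hurwitz. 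No equivariant construction can circumvent this; your reduction to the $2$-term equation discards exactly the flexibility that larger $k$ provides.

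The paper's route is both different and far shorter. It first observes (Theorem~3.1) that a non-trivial solution over $\tilde S=\mathbb D$ pulls back to one over $S$ via any non-constant holomorphic $\alpha:S\to\mathbb D$ --- this is precisely your ``shortcut'', which the paper adopts as the main argument rather than a special case. It then writes down explicit elementary solutions on $\mathbb D$ for the full $k$-term equation: set $f_j=a_jz$ for $j\ge 2$ with $|a_2^n+\cdots+a_k^n|\le 1$ and $f_1=\exp\bigl(\tfrac{1}{n}\log(1-(a_2^n+\cdots+a_k^n)z^n)\bigr)$, holomorphic on $\mathbb D$ since the argument of the logarithm stays in $1+\mathbb D$; analogous formulas with $z^{-1}$ give genuinely meromorphic solutions. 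No Fermat curves or Fuchsian groups enter. (The paper's reduction does share the weak point you implicitly flagged --- the existence of a non-constant $\alpha:S\to\mathbb D$ is not automatic if ``hyperbolic'' is read merely as $\tilde S=\mathbb D$, as $\mathbb C\setminus\{0,1\}$ shows --- but once such an $\alpha$ is granted, the paper's construction is complete in two lines, whereas yours remains both unfinished and, for $k=2$, obstructed.)
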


\begin{thaii} When  $k=2,$  there exist no non-trivial holomorphic solutions for $n\geq 3$ and no non-trivial meromorphic solutions for
 $n\geq 4$ of $(\ref{asdf})$ over $S$ satisfying the growth condition
$$\liminf_{r\rightarrow\infty}\frac{\kappa(r)r^2}{ \mathfrak T_{f_1,f_2}(r)}=0.$$
 In particular,  the   conclusions hold for $\tilde S=\mathbb C$ without  growth condition.
\end{thaii}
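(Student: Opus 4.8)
The plan is to follow the classical Nevanlinna-theoretic strategy for Fermat equations, but carried out on the Riemann surface $S$ using $\mathfrak T_{f_1,f_2}(r)$ as the characteristic function and a Logarithmic Derivative Lemma adapted to $S$ (whose error term, I expect, is controlled by something like $O(\log^+ \mathfrak T_{f_1,f_2}(r)) + O(\log r) + O(-\kappa(r)r^2)$ or a similarly curvature-weighted quantity — this is presumably exactly why the hypothesis $\liminf \kappa(r)r^2/\mathfrak T_{f_1,f_2}(r)=0$ is the right one). First, suppose $f_1,f_2$ is a non-trivial meromorphic solution of $f_1^n+f_2^n=1$. Factoring $f_1^n = 1 - f_2^n = \prod_{j=1}^n(1-\zeta^j f_2)$ with $\zeta = e^{2\pi i/n}$, I would set up a second main theorem / defect relation on $S$ for the meromorphic function $f_2$ with the $n+1$ target points $\{\,\zeta^{-1},\zeta^{-2},\dots,\zeta^{-n},\infty\,\}$ (the $n$ roots of unity where $1-\zeta^j f_2$ vanishes, together with $\infty$). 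The key point is that at each such point the multiplicity of $f_2$ must be divisible by $n$ (since $f_1^n$ equals $1-f_2^n$ and $f_1$ is meromorphic), so the truncated counting function satisfies $\overline N(r,a) \le \tfrac1n N(r,a)$, hence the ramification term in SMT is at least $(n+1)(1-\tfrac1n)T(r,f_2)$ in magnitude. Comparing with SMT, which says $(n-1)T(r,f_2) \lesssim \sum \overline N(r,a) + S(r) \le \tfrac{n+1}{n}T(r,f_2) + S(r)$, gives $(n-1)T(r,f_2)\le \tfrac{n+1}{n}T(r,f_2)+S(r)$, i.e. $T(r,f_2)\le \tfrac{n}{n^2-n-1}\,S(r)$, which is impossible once $n\ge 4$ because then $S(r) = o(T(r,f_2))$ along a suitable sequence (this is where the growth hypothesis enters, to ensure $S(r)$ is genuinely small compared to the characteristic).

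For the holomorphic case $n\ge 3$: now $f_1,f_2$ are entire (holomorphic on $S$) with no poles, so $f_2$ omits $\infty$ and omits none of the finite targets a priori, but the divisibility-by-$n$ argument still applies at the $n$ finite roots, giving $\sum_{j=1}^n \overline N(r,\zeta^{-j}) \le \tfrac1n\sum_{j=1}^n N(r,\zeta^{-j}) \le \tfrac{n}{n}\,T(r,f_2)+O(1) = T(r,f_2)+O(1)$ after absorbing constants, while Nevanlinna's SMT with these $n$ finite values plus $\infty$ (where $\overline N(r,\infty)=0$) yields $(n-1)T(r,f_2) \le T(r,f_2) + S(r)$, forcing $(n-2)T(r,f_2)\le S(r)$, impossible for $n\ge 3$. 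I should double-check the exact bookkeeping of constants versus $\tfrac1n$-factors here; the honest computation is the standard one, with $n\ge 3$ versus $n\ge 4$ coming out of whether or not one of the omitted-value slots is $\infty$.

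The main obstacle — and the part that needs the most care — is establishing the Second Main Theorem and the Logarithmic Derivative Lemma on a general open Riemann surface $S$ with the weighted error term matching the hypothesis $\liminf_{r\to\infty}\kappa(r)r^2/\mathfrak T_{f_1,f_2}(r)=0$. I would build these from the co-area formula applied to the geodesic discs $D_o(r)$, the Green-function representation of $\mathfrak T$, and a Jensen-type formula on $D_o(r)$; the curvature lower bound $\kappa(r)$ controls the isoperimetric/length-area defect between the metric on $S$ and the flat model, which is precisely what produces the $\kappa(r)r^2$ term in the remainder. The hypothesis then says: along some sequence $r_m\to\infty$ the error term is $o(\mathfrak T_{f_1,f_2}(r_m))$, so all the "$S(r)$" terms above are genuinely negligible along $\{r_m\}$, and the contradiction goes through. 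For $\tilde S=\mathbb C$ one can take the flat metric, so $\kappa\equiv 0$ and no hypothesis is needed — recovering the classical results of Gross and Iyer. I would also remark that these SMT/LDL ingredients are very likely the same ones underlying Theorem I's companion results, so much of the analytic machinery can be cited from earlier in the paper rather than redone.
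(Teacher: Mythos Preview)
Your approach is genuinely different from the paper's. The paper proves Theorem II via the vanishing theorem for jet differentials (Theorem \ref{thm2}): viewing a solution $(f_1,f_2)$ as a holomorphic curve $\mathfrak C:S\to C_n\subset\mathbb P^2(\mathbb C)$ into the Fermat curve, it constructs the Wronskian-type 1-jet differential $\Phi = dy/x^{n-1} = -dx/y^{n-1} = x\,dy - y\,dx$ on $C_n$, checks that $\Phi$ is holomorphic for $n\ge 3$ and vanishes along the ample divisor $(Z=0)\cap C_n$ for $n\ge 4$, and then invokes Theorem \ref{thm2} under the growth condition to force $\mathfrak C^*\Phi\equiv 0$, i.e.\ $f_1\,dg - g\,df_1=0$, whence $g=af$. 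For the holomorphic case it passes to $u=x/y,\ v=1/y$ and uses the \emph{logarithmic} jet differential $\eta=v^{-1}(v\,du-u\,dv)$, whose log-poles lie on $v=0$; since $1/g$ omits $0$, the curve avoids the log-poles, and $\eta$ vanishes on an ample divisor already for $n\ge 3$. Your route via the Second Main Theorem for $f_2$ with the $n$-th roots of unity (plus $\infty$) is the classical Gross--Iyer argument transplanted to $S$; interestingly, the paper itself uses essentially this SMT/defect method, built on its Logarithmic Derivative Lemma (Lemma \ref{ldl}) and Lemma \ref{lem5}, but only later for the \emph{generalized} equations in Section 5, not for Theorem II.

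One genuine correction to your bookkeeping: in the meromorphic case there is \emph{no} divisibility constraint on the pole orders of $f_2$ at $\infty$ (a pole of $f_2$ of order $m$ simply forces $f_1$ to have a pole of the same order $m$). So the bound $\overline N(r,\infty)\le \tfrac1n N(r,\infty)$ is unjustified; you must keep the full $\overline N(r,\infty)\le T(r,f_2)+O(1)$. The correct inequality is then $(n-1)T\le \tfrac{n}{n}T + T + S(r)$, i.e.\ $(n-3)T\le S(r)$, which gives the threshold $n\ge 4$ for exactly the right reason. Your holomorphic case is fine as written. With that fix, your argument works; the paper's jet-differential route has the advantage of extending more uniformly to the surface case $k=3$ (Section 4.2), while yours is more elementary and stays entirely within one-variable Nevanlinna theory on $S$, needing only Lemma \ref{ldl} as the analytic input.
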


\begin{thaiii} When  $k=3,$  there exist no non-trivial holomorphic solutions for $n\geq 7$ 
 and no non-trivial meromorphic solutions for $n\geq 9$ 
of $(\ref{asdf})$ over $S$ satisfying the growth condition
$$\liminf_{r\rightarrow\infty}\frac{\kappa(r)r^2}{\mathfrak T_{f_1,f_2, f_3}(r)}=0.$$
 In particular,  the   conclusions hold for $\tilde S=\mathbb C$ without  growth condition.
\end{thaiii}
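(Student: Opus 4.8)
The plan is to transport Cartan's classical argument for Hayman's theorem to the Riemann surface setting, using the truncated second main theorem over $S$, whose error term carries the curvature contribution governed by $\kappa(r)r^2$. Assume $f_1,f_2,f_3$ is a non-trivial holomorphic (resp.\ meromorphic) solution satisfying the stated growth condition. The decisive reduction is to \emph{linearize}: put $h_j=f_j^n$, so that $h_1+h_2+h_3=1$ and each $h_j$ is an $n$-th power. In the holomorphic case take $G=[h_1:h_2:h_3:1]\colon S\to\mathbb P^3$, whose image lies on the hyperplane $\pi=\{X_1+X_2+X_3-X_4=0\}\cong\mathbb P^2$, a reduced representation since its last coordinate is a unit. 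In the meromorphic case, pick a common denominator $Q$ with $Q$ and all $h_jQ$ entire; a direct check shows $Q$ and each $h_jQ$ are again $n$-th powers of entire functions, so $G=[h_1Q:h_2Q:h_3Q:Q]$ has all four coordinate functions equal to $n$-th powers and again maps into $\pi\cong\mathbb P^2$. In either case the four coordinate hyperplanes $\{X_i=0\}$, $i=1,\dots,4$, restrict to four lines in general position on $\pi$ (a concurrence of three would force $X_1=\cdots=X_4=0$ on $\pi$, which is impossible), and $G$ is nonconstant because the solution is non-trivial.

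Suppose first that $G$ is linearly non-degenerate on $\pi$. Applying the second main theorem over $S$ to $G$ and these four lines, with truncation level $2$, gives
\[(4-2-1)\,T_G(r)\le\sum_{i=1}^4 N^{(2)}\bigl(r,G^{-1}\{X_i=0\}\bigr)+O\bigl(\log^+ T_G(r)+\log r+|\kappa(r)|\,r^2\bigr)\]
off a set of finite measure, the curvature estimate coming from the logarithmic derivative lemma on $S$. Since every coordinate function of $G$ is an $n$-th power, each of its zeros has order $\ge n$, so $N^{(2)}(r,\cdot)\le\frac2n N(r,\cdot)\le\frac2n T_G(r)+O(1)$ by the first main theorem; in the holomorphic case the hyperplane $\{X_4=0\}$ is moreover totally omitted. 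Hence $(1-\frac6n)T_G(r)$ in the holomorphic case, and $(1-\frac8n)T_G(r)$ in the meromorphic case, is bounded by the error term; and since $T_G(r)$ and $\mathfrak T_{f_1,f_2,f_3}(r)$ differ only by a factor comparable to $n$ and a bounded term, the hypothesis $\liminf_{r\to\infty}\kappa(r)r^2/\mathfrak T_{f_1,f_2,f_3}(r)=0$ makes that error term $o(T_G(r))$ along a sequence $r_m\to\infty$. This forces $1-\frac6n\le0$ (resp.\ $1-\frac8n\le0$), contradicting $n\ge7$ (resp.\ $n\ge9$).

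If instead $G$ is linearly degenerate, its image lies on a line $\ell\subset\pi$, yielding a linear relation among $h_1,h_2,h_3,1$ independent of $h_1+h_2+h_3=1$; solving the resulting linear system shows that, after a constant rescaling of the $f_j$, either some proper nonempty subsum $\sum_{i\in I}f_i^n$ is constant — excluded by the standing non-triviality hypothesis — or two of the functions solve a non-trivial two-term Fermat equation, excluded by Theorem II (whose growth hypothesis is inherited, the reduced datum having characteristic equal to $\mathfrak T_{f_1,f_2,f_3}$ up to a bounded term since the third function is then algebraic over the first two). Finally, when $\tilde S=\mathbb C$ the surface $S$ is parabolic and admits a complete flat metric, so $K_S\equiv0$, hence $\kappa\equiv0$ and the growth condition is vacuous, giving the stated unconditional conclusion.

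The principal obstacle is not this reduction but the second main theorem over $S$ and the precise shape of its error term: one must have a Cartan-type inequality in which the non-Euclidean geometry of $S$ enters only through a term of size $O(|\kappa(r)|r^2)$, so that it is absorbed into $\mathfrak T_{f_1,f_2,f_3}$ exactly under the stated $\liminf$ hypothesis. This rests on the logarithmic derivative lemma over $S$ and on comparing the characteristic functions $T_G$, $\mathfrak T_{f_1,f_2,f_3}$ and $\mathfrak T_{f_1^n,f_2^n,f_3^n}$. Granting these, the $n$-th-power multiplicity bookkeeping, the general-position check, and the degenerate case are all routine.
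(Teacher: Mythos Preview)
Your argument is correct and takes a genuinely different route from the paper. The paper does not linearize; it works directly on the Fermat surface $S_n=\{X^n+Y^n+Z^n=W^n\}\subset\mathbb P^3(\mathbb C)$ and constructs, by differentiating the defining equation twice and applying Cramer's rule, explicit $2$-jet differentials: a holomorphic $\omega=xyz\Phi$ vanishing along the ample divisor $(W=0)\cap S_n$ when $n\ge9$, and a logarithmic $\eta=(uv/w)\Psi$ with log-pole along $w=0$ vanishing along $(Z=0)\cap S_n$ when $n\ge7$. The vanishing theorem for jet differentials (Theorem~\ref{thm2}) then forces $\mathfrak C^*\omega\equiv0$ (resp.\ $\mathfrak C^*\eta\equiv0$) under the growth condition, and solving the resulting second-order ODE produces an algebraic relation $g^n=af^n+b$, reducing everything to the two-term case.

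Your approach is instead the classical Cartan--Hayman one: set $h_j=f_j^n$, view the curve in the hyperplane $\{X_1+X_2+X_3=X_4\}\cong\mathbb P^2(\mathbb C)$, and apply the truncated second main theorem over $S$ to the four coordinate lines, exploiting that each coordinate is an $n$-th power to get $N^{(2)}\le(2/n)T_G$. This is essentially the method the paper reserves for the \emph{generalized} and small-function equations in Sections~5--6, where Lemma~\ref{lem5} (proved by the same Wronskian and logarithmic-derivative-lemma machinery that underlies Cartan's inequality) plays the role of your second main theorem. The jet-differential proof buys a more geometric picture tied to the Fermat surface and handles the meromorphic case without the $n$-th-power common-denominator reduction; your proof is more uniform and makes transparent why the thresholds $7$ and $9$ arise from the arithmetic $(q-N-1)$ versus $2q/n$. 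Both ultimately rest on the same analytic input from \cite{dong}, and both finish the degenerate case by reduction to Theorem~II.
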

Secondly,  we  consider the generalized $k$-term Fermat functional equation 
\begin{equation}\label{asdf1}
f_1^{n_1}+\cdots+f_k^{n_k}=1
\end{equation}
over $S.$

\begin{thaiv}\label{sss1}  If $S$ is hyperbolic, then there  exist  non-trivial  holomorphic and meromorphic  solutions of  $(\ref{asdf1}).$ 
\end{thaiv}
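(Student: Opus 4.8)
The plan is to prove Theorem IV by the same geometric mechanism as Theorem I, the unequal exponents causing no new difficulty.

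\emph{Reformulation.} A holomorphic solution of $(\ref{asdf1})$ is the same thing as a holomorphic map $F=(f_1,\dots,f_k)\colon S\to V$, where $V=\{x\in\CC^k:\ x_1^{n_1}+\dots+x_k^{n_k}=1\}$ is the affine Fermat variety, and a meromorphic solution is a holomorphic map $S\to\widehat V$, with $\widehat V$ the normalization of the closure of $V$ in $(\PP^1)^k$. The solution is non-trivial precisely when $\sum_{i\in I}f_i^{n_i}$ is non-constant for every proper nonempty $I\subsetneq\{1,\dots,k\}$ (so in particular $F$ is non-constant, and the solution does not reduce to a shorter equation). Thus the task is: for hyperbolic $S$, build a non-constant, suitably generic holomorphic map $S\to V$ (and, for the meromorphic statement, $S\to\widehat V$).

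\emph{The simply connected case.} On $\mathbb{D}$ this is elementary. For $k=2$, with $V=\{x^{n_1}+y^{n_2}=1\}$, take the smooth point $(1,0)$; then $x=(1-y^{n_2})^{1/n_1}$ is holomorphic for $y$ near $0$, and for $0<\varepsilon<1$ the map $w\mapsto\big(\,(1-\varepsilon^{n_2}w^{n_2})^{1/n_1},\ \varepsilon w\,\big)$, $|w|<1$, is a bounded holomorphic map $\mathbb{D}\to V$ with both coordinates non-constant. For $k\ge3$ one instead takes a holomorphic arc $w\mapsto\gamma(w)$ through a smooth point of $V$ all of whose coordinates are nonzero, choosing the tangent direction $v=\gamma'(0)$ outside the finitely many proper linear subspaces of $TV$ cut out by $v_i=0$ and by $\sum_{i\in I}n_i\gamma_i(0)^{n_i-1}v_i=0$; then all coordinates of $\gamma$ are non-constant and no proper subsum $\sum_{i\in I}\gamma_i^{n_i}$ is constant along $\gamma$, and one reparametrises the (small) domain of $\gamma$ as $\mathbb{D}$. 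The meromorphic case is identical, starting instead from a point of $\widehat V$ over which some $x_i=\infty$.

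\emph{Transplanting to $S$; the obstacle.} It remains to descend from $\mathbb{D}$ to the given hyperbolic surface $S$. If $\phi\colon S\to\mathbb{D}$ is any non-constant holomorphic map --- equivalently, if $S$ carries a non-constant bounded holomorphic function --- then composing $\phi$ with the model map above produces the desired solution on $S$; non-triviality survives the composition, since composing a non-constant holomorphic function on a Riemann surface with a non-constant holomorphic map again yields a non-constant function (the image of $\phi$ is open, hence not contained in the discrete zero set of any nonzero holomorphic function). The crux, which I expect to be the main obstacle, is the production of such a $\phi$ on an \emph{arbitrary} hyperbolic $S$: on an infinitely connected surface, bounded holomorphic functions are not available for free. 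I would obtain $\phi$ by exhausting $S=\bigcup_j S_j$ by relatively compact subdomains $S_j\Subset S_{j+1}$ with smooth boundary --- each $S_j$ a bordered Riemann surface, on which bounded holomorphic functions abound --- solving the problem on every $S_j$, and extracting a holomorphic limit $S\to V$ by a normal-families argument; the single point that requires genuine work, and where the hyperbolicity of $S$ (concretely, the positivity of the capacity of its ideal boundary, carried by the Green function $g_r(o,\cdot)$ introduced above) enters quantitatively, is to keep the limit map non-constant. The meromorphic statement is softer, since meromorphic functions on an open Riemann surface are plentiful and poles supply extra room, but the essential mechanism is the same.
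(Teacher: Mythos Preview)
Your high-level plan is exactly the paper's: first solve on $\mathbb D$, then compose with a non-constant holomorphic map $S\to\mathbb D$. The paper's $\mathbb D$-construction is more explicit than your local-arc argument --- with $n=[n_1,\dots,n_k]$ and $n=p_jn_j$, it takes $f_j=a_jz^{p_j}$ for $j\ge2$ subject to $|a_2^{n_2}+\cdots+a_k^{n_k}|\le1$, and then $f_1=\exp\bigl(\tfrac1{n_1}\log(1-a_2^{n_2}z^n-\cdots-a_k^{n_k}z^n)\bigr)$, with an analogous pole-carrying variant for the meromorphic case --- but the mechanism is the same. For the transplant step the paper simply asserts the existence of ``a suitable non-constant holomorphic mapping $\alpha:S\to\tilde S$'' without justification, packaging this as the equivalence ``solutions on $S$ $\Leftrightarrow$ solutions on $\tilde S$''.

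You are right to flag that step as the crux, but your exhaustion/normal-families proposal cannot close it, because the needed map need not exist. Take $S=\CC\setminus\{0,1\}$: its universal cover is $\mathbb D$, yet every bounded holomorphic function on $S$ extends across the punctures by Riemann's theorem and is constant by Liouville. In fact the statement itself fails for this $S$ already in the equal-exponent case $k=2$, $n_1=n_2=n\ge3$: the affine Fermat curve $x^n+y^n=1$ is then a hyperbolic Riemann surface, so by the big Picard theorem every holomorphic map from $\CC\setminus\{0,1\}$ into it extends to a holomorphic map $\PP^1\to C_n$, and since $C_n$ has genus $\ge1$ this map is constant. Thus no capacity or Green-function input can rescue the argument; the obstacle you identified is not a technical loose end but a genuine obstruction, and the paper's bare assertion of $\alpha$ is the same gap in disguise.
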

\begin{thav} When  $k=2,$  there exist no non-trivial holomorphic solutions for $1/n_1+1/n_2<1$ 
of $(\ref{asdf1})$ over $S$ satisfying the growth condition
$$\liminf_{r\rightarrow\infty}\frac{\kappa(r)r^2}{\min\{\mathfrak T_{f_1}(r), \mathfrak T_{f_2}(r)\}}=0;$$
there exist  no non-trivial meromorphic solutions for 
 $1/n_1+1/n_2 \leq 1/2$ of $(\ref{asdf1})$ over $S$ satisfying the growth condition
$$\liminf_{r\rightarrow\infty}\frac{\kappa(r)r^2}{\mathfrak T_{f_1,f_2}(r)}=0.$$
 In particular,  the  conclusions hold for $\tilde S=\mathbb C$ without  growth condition.
\end{thav}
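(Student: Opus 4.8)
The plan is to reduce the non-existence statements to a Second Main Theorem (or an associated vanishing/curvature estimate) for holomorphic curves into projective spaces over the Riemann surface $S$, exactly as in the proofs of Theorems II and III, but now keeping track of the different exponents $n_1,n_2$. First I would treat the holomorphic case. Suppose $f_1^{n_1}+f_2^{n_2}=1$ with $f_1,f_2$ holomorphic and non-trivial. Consider the holomorphic map $F=[f_1^{n_1}:f_2^{n_2}:1]$, or better, the curve avoiding the three ``coordinate-type'' divisors together with the hyperplane $x_0+x_1=x_2$; the point is that $f_1$ has zeros of order divisible by $n_1$ and $f_2$ of order divisible by $n_2$ wherever they vanish, so the ramification of the associated curve forces a defect relation. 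Concretely I would lift everything to the universal cover $\tilde S\in\{\mathbb C,\mathbb D\}$, use the Green function $g_r(o,x)$ to build the Nevanlinna characteristic $\mathfrak T$, and run the curvature argument (the ``Vanishing Theorem'' and Logarithmic Derivative Lemma advertised in the keywords) to obtain an inequality of the shape
$$\Big(1-\frac1{n_1}-\frac1{n_2}\Big)\,T(r,F)\le N_{\mathrm{ram}}\text{-type terms}+O\!\big(\text{error involving }\kappa(r)r^2\big)+S(r),$$
where $T(r,F)$ is comparable to $\min\{\mathfrak T_{f_1}(r),\mathfrak T_{f_2}(r)\}$ up to bounded factors. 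When $1/n_1+1/n_2<1$ the coefficient on the left is positive, so under the growth hypothesis $\liminf \kappa(r)r^2/\min\{\mathfrak T_{f_1}(r),\mathfrak T_{f_2}(r)\}=0$ one passes to a sequence $r_j\to\infty$ along which the curvature error is negligible, and derives a contradiction with the non-triviality (which guarantees $T(r,F)\to\infty$).

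For the meromorphic case I would instead work with the curve $F=[f_1:f_2:1]$ into $\mathbb P^2$ (no $n_i$-th power in the coordinates) but exploit that the divisor cut out by $x_0^{n_1}+x_1^{n_2}=x_2^{\max}$ — more precisely, after clearing denominators, the solution means $F$ omits (or is ramified over) a Fermat-type curve $C\subset\mathbb P^2$ of degree $N=\mathrm{lcm}(n_1,n_2)$ or $n_1+n_2$ adjusted appropriately. The cleaner route, mirroring Theorem II's meromorphic half, is to use the genus of the Fermat-type curve $x^{n_1}+y^{n_2}=1$: it is hyperbolic (of general type) exactly when $1/n_1+1/n_2<1$ for the affine picture, but the sharper threshold $1/n_1+1/n_2\le 1/2$ for meromorphic solutions comes from allowing poles, i.e. from passing to the projective closure and counting that a meromorphic map from a parabolic/hyperbolic surface to a curve of genus $\ge 2$ must be constant, while genus $1$ needs the $\le 1/2$ line. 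So I would: (1) identify the normalization of the plane curve $\{x^{n_1}+y^{n_2}=1\}$ and compute its genus $\mathfrak g$; (2) note $(f_1,f_2)$ gives a holomorphic map $S\to$ (that curve); (3) invoke the uniformization-plus-curvature machinery to conclude such a map is constant when $\mathfrak g\ge 2$ (general type, $\Leftrightarrow 1/n_1+1/n_2<1$ ... need to verify which inequality gives $\mathfrak g\ge 2$ versus using the growth condition); and (4) for the boundary cases use the $\mathfrak T_{f_1,f_2}$ growth hypothesis together with a Second Main Theorem for the three or four points on $\mathbb P^1$ obtained after composing with a degree-one projection, pushing through the numerics so that $1/n_1+1/n_2\le 1/2$ is exactly what makes the defect sum exceed the Euler-characteristic bound.

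The last sentence, that the conclusions hold for $\tilde S=\mathbb C$ without any growth condition, follows because when $\tilde S=\mathbb C$ one may take the flat metric, so $K_S\equiv 0$, hence $\kappa(r)\equiv 0$ and the hypothesis $\liminf\kappa(r)r^2/\mathfrak T=0$ is automatic; I would state this as an immediate corollary of the general cases.

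I expect the main obstacle to be bookkeeping the \emph{asymmetry} between $n_1$ and $n_2$ inside the Second Main Theorem error terms — in particular making the characteristic $T(r,F)$ of the auxiliary projective curve comparable to $\min\{\mathfrak T_{f_1}(r),\mathfrak T_{f_2}(r)\}$ (for the holomorphic case) rather than to some combination that would weaken the exponent threshold, and separately, pinning down why the \emph{meromorphic} threshold is the stronger $1/n_1+1/n_2\le 1/2$ rather than $<1$: this should come from the fact that poles of $f_1,f_2$ contribute extra ramification/branching, effectively doubling the relevant truncation level, so the honest place to be careful is the truncated counting functions $N_1(r,\cdot)$ versus $N(r,\cdot)$ and the precise genus computation of the projectivized Fermat curve with the line at infinity added. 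Everything else — the Logarithmic Derivative Lemma on $S$, the Vanishing Theorem, and the passage to a sequence $r_j$ killing the curvature term — is expected to be a routine transcription of the tools already set up for Theorems I–III.
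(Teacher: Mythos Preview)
Your holomorphic half is essentially the paper's argument. The paper proves a truncated defect relation (Lemma~\ref{lem5}): for holomorphic $\psi_0,\dots,\psi_n$ with $\sum a_j\psi_j=0$ and the growth hypothesis, $\sum_j\delta^{[n]}(\psi_j,0)\le n$, via the Wronskian and the Logarithmic Derivative Lemma on $S$. Applied with $n=1$ to $f^{m}+g^{n}-1=0$, the estimate $\delta^{[1]}(f^{m},0)\ge 1-1/m$ (every zero of $f^{m}$ has multiplicity $\ge m$) gives $1/m+1/n\ge 1$ directly.

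The meromorphic half has a real gap. The paper does \emph{not} compute the genus of the normalization of $\{x^{m}+y^{n}=1\}$ and appeal to constancy of maps into hyperbolic curves; it runs the explicit jet-differential argument of Section~4 with unequal exponents. On the projective curve $C_{m,n}:X^{m}+Y^{n}Z^{m-n}=Z^{m}$ (say $m\ge n$) one writes the Wronskian $1$-form
$\Phi_1=x\,dy-y\,dx=dy/(mx^{m-1})=-dx/(ny^{n-1})$,
checks it is holomorphic on the affine part (the unique singularity $[0{:}1{:}0]$ lies on $Z=0$, which the paper handles separately), and reads off from either quotient expression that the pole order at $(Z=0)$ is at most $3-\max(m,n)$. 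The hypothesis $1/m+1/n\le 1/2$ forces $\max(m,n)\ge 4$, so $\Phi_1$ vanishes along the ample divisor $(Z=0)\cap C_{m,n}$; Theorem~\ref{thm2} then gives $\mathfrak C^{*}\Phi_1=f\,dg-g\,df\equiv 0$, whence $g=af$ and the solution is trivial. Your explanation of the threshold (``genus $1$ needs the $\le 1/2$ line'') is incorrect: the normalization of $x^{m}+y^{n}=1$ already has genus $\ge 2$ for many $(m,n)$ with $1/m+1/n>1/2$ (for instance $(2,5)$ has genus~$2$), so a genus argument, made precise, would actually prove more than Theorem~V asserts --- but you neither carry it out nor connect it to the vanishing theorem, and the value $1/2$ in the statement comes from the pole-order count for this particular $\Phi_1$, not from any genus dichotomy.
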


\begin{thavi} When  $k=3,$  there exist no non-trivial holomorphic solutions for  $1/n_1+1/n_2+1/n_3<1/2$  of $(\ref{asdf1})$ over $S$ satisfying the growth condition
$$\liminf_{r\rightarrow\infty}\frac{\kappa(r)r^2}{\min\{\mathfrak T_{f_1}(r), \mathfrak T_{f_2}(r), \mathfrak T_{f_3}(r)\}}=0;$$
there exist no non-trivial meromorphic solutions  for  $1/n_1+1/n_2+1/n_3 \leq 1/3$  of $(\ref{asdf1})$ over $S$ satisfying the growth condition
$$\liminf_{r\rightarrow\infty}\frac{\kappa(r)r^2}{\mathfrak T_{f_1,f_2, f_3}(r)}=0.$$
 In particular,  the  conclusions hold for $\tilde S=\mathbb C$ without  growth condition.
\end{thavi}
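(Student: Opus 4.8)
The plan is to attach to a hypothetical non‑trivial solution an auxiliary holomorphic curve into a projective space and apply the Second Main Theorem over $S$ — the Cartan‑type version obtained earlier from the Logarithmic Derivative Lemma and the Vanishing Theorem for jet differentials — exploiting that $f_i^{n_i}$ has all its zeros (and, in the meromorphic case, all its poles) of multiplicity a multiple of $n_i$, so that the relevant truncated counting functions are $O\big(T_\Phi(r)/n_i\big)$. We first reduce to the case that $f_1,f_2,f_3$ are all non‑constant; this is exactly non‑triviality, since if some $f_i$ is constant then $f_j^{n_j}+f_l^{n_l}=1-f_i^{n_i}$ is constant, so either all $f_i$ are constant, or (after rescaling) $\{f_j,f_l\}$ solves a generalized two‑term Fermat equation — impossible by Theorem V, because $1/n_1+1/n_2+1/n_3<1/2$ (resp.\ $\le 1/3$) forces $1/n_j+1/n_l<1$ (resp.\ $\le 1/2$), and the growth condition transfers since $f_i^{n_i}=1-f_j^{n_j}-f_l^{n_l}$ gives $\mathfrak{T}_{f_1,f_2,f_3}(r)\le C\,\mathfrak{T}_{f_j,f_l}(r)+O(1)$. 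Next form $\Phi=[f_1^{n_1}:f_2^{n_2}:f_3^{n_3}]\colon S\to\mathbb{P}^2$ (well defined, as $\sum f_i^{n_i}=1$ forbids a common zero) and split according to whether $\Phi$ is linearly non‑degenerate. If it is degenerate, a non‑trivial relation $c_1f_1^{n_1}+c_2f_2^{n_2}+c_3f_3^{n_3}\equiv0$ holds; since $\sum f_i^{n_i}=1$, the vector $(c_1,c_2,c_3)$ is not proportional to $(1,1,1)$, so eliminating one unknown between the two relations yields either a constancy of some $f_i$ (excluded) or once more a generalized two‑term Fermat equation, and Theorem V disposes of this case exactly as above.

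It remains to treat the non‑degenerate case. In the holomorphic case $[f_1^{n_1}:f_2^{n_2}:f_3^{n_3}]$ is a reduced representative, the four hyperplanes $H_i=\{x_{i-1}=0\}$ $(i=1,2,3)$ and $H_4=\{x_0+x_1+x_2=0\}$ are in general position, and $\Phi$ omits $H_4$ because $\Phi^{\ast}H_4$ is the divisor of the constant $1$. The Second Main Theorem over $S$ with truncation level $2$ gives
$$T_\Phi(r)\le \sum_{i=1}^{3}N^{[2]}_\Phi(r,H_i)+S_\Phi(r),$$
and, each zero of $f_i$ producing a zero of $f_i^{n_i}$ of order $\ge n_i\ge 3$,
$$N^{[2]}_\Phi(r,H_i)=2\,\overline N\!\left(r,\tfrac1{f_i}\right)\le \tfrac{2}{n_i}N\!\left(r,\tfrac1{f_i^{n_i}}\right)\le \tfrac{2}{n_i}\bigl(T_\Phi(r)+O(1)\bigr),$$
whence $\bigl(1-2(\tfrac1{n_1}+\tfrac1{n_2}+\tfrac1{n_3})\bigr)T_\Phi(r)\le S_\Phi(r)+O(1)$. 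The error term $S_\Phi(r)$ is $O(\log^{+}T_\Phi(r))+O(\log r)$ plus a curvature contribution bounded by a constant times $|\kappa(r)|r^2$; since $T_\Phi(r)$ is comparable to $\mathfrak{T}_{f_1^{n_1},f_2^{n_2},f_3^{n_3}}(r)$ and dominates each $n_j\mathfrak{T}_{f_j}(r)$, the hypothesis $\liminf_{r\to\infty}\kappa(r)r^2/\min_j\mathfrak{T}_{f_j}(r)=0$ forces $S_\Phi(r)=o(T_\Phi(r))$ along a sequence $r_m\to\infty$. As the left‑hand coefficient is positive when $1/n_1+1/n_2+1/n_3<1/2$, $T_\Phi(r_m)$ stays bounded, so $\Phi$ is constant — a contradiction.

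The meromorphic case uses the same curve $\Phi$, but now its reduced representative carries a common multiplier $\mu$ clearing the poles of the $f_i$, $H_4$ is no longer omitted, and $\Phi^{\ast}H_4=(\mu)_0$ is precisely the divisor of the points where the $f_i$ have (necessarily cancelling) poles. At such a point the polar parts of the $f_i^{n_i}$ must sum to a holomorphic germ, so at least two of the $f_i$ share the dominant pole, $\mu$ vanishes there to order $\ge n_i$ for those indices, and at most one of the reduced coordinates vanishes. Feeding this into the Second Main Theorem and estimating, via the $n_i$‑th power structure and the First Main Theorem, both $N^{[2]}_\Phi(r,H_4)$ and the pole contributions to the $N^{[2]}_\Phi(r,H_i)$ $(i=1,2,3)$, one arrives at
$$\Bigl(1-3\bigl(\tfrac1{n_1}+\tfrac1{n_2}+\tfrac1{n_3}\bigr)\Bigr)T_\Phi(r)\le S_\Phi(r)+O(1),$$
the extra $\sum 1/n_i$ being the cost of the poles, with the boundary case $1/n_1+1/n_2+1/n_3=1/3$ absorbed by the ramification (Wronskian) term of the Second Main Theorem; one concludes as before, now using $\liminf_{r\to\infty}\kappa(r)r^2/\mathfrak{T}_{f_1,f_2,f_3}(r)=0$. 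Finally, when $\tilde S=\mathbb{C}$ (so $S$ is $\mathbb{C}$ or $\mathbb{C}^{\ast}$) one equips $S$ with its flat metric, so $\kappa\equiv 0$ and the growth conditions hold automatically for any non‑trivial solution; the only separate point is rational solutions, for which the above Nevanlinna inequalities carry no error term at all and the same computation applies.

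The main obstacle is the meromorphic pole accounting: one must first show that the poles of $f_1,f_2,f_3$ are forced to match up (so $\Phi$ is a genuine holomorphic curve and $\mu$ is well controlled), and then track precisely how the high‑multiplicity divisor $(\mu)_0$ is distributed among the four hyperplanes, so that its total truncated contribution is only $\big(\tfrac1{n_1}+\tfrac1{n_2}+\tfrac1{n_3}\big)T_\Phi(r)+O(1)$ — this is exactly what pins the threshold at $1/3$ and is where the Wronskian term must be used efficiently. A minor, purely bookkeeping obstacle is to check that every branch of the degenerate‑curve reduction genuinely lands inside the hypotheses of Theorem V, including the transfer of the growth condition.
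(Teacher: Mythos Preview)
Your holomorphic argument is essentially the paper's own: the paper proves a defect relation (Lemma~5.4) via Wronskians and the Logarithmic Derivative Lemma, which is exactly the Cartan--type Second Main Theorem you invoke, and then uses $\delta^{[2]}(f_i^{n_i},0)\ge 1-2/n_i$, which is your truncated counting estimate rephrased. So that half is fine and matches the paper.

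The meromorphic half, however, diverges from the paper and contains a real gap. The paper does \emph{not} use the Second Main Theorem for the meromorphic statement. Instead it works geometrically on the Fermat--type surface
\[
S_{m,n,l}\colon\; X^m+Y^nW^{m-n}+Z^lW^{m-l}=W^m\subset\mathbb P^3,
\]
constructs (as in the equal--exponent case) an explicit $2$--jet differential $\omega=xyz\,\Phi_2$, checks that under $1/m+1/n+1/l\le 1/3$ this $\omega$ is holomorphic on a resolution of $S_{m,n,l}$ and vanishes along the ample divisor $(W=0)\cap S_{m,n,l}$, and then applies the Vanishing Theorem to force $\mathfrak C^*\omega\equiv 0$. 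That yields an ODE whose general solution is $y^n=ax^m+b$, reducing to a two--term equation dispatched by the two--term result. The singular locus of $S_{m,n,l}$ is handled by observing it is a Delsarte surface with only isolated ADE singularities, hence normal, so $\omega$ extends. The equality case $\sum 1/n_i=1/3$ is included automatically because the vanishing order of $\omega$ along infinity becomes positive precisely there.

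Your SMT route for the meromorphic case is only a sketch, and the step you yourself flag as ``the main obstacle'' --- bounding the total truncated contribution of the common--pole divisor $(\mu)_0$ across the four hyperplanes by $\bigl(\sum 1/n_i\bigr)T_\Phi(r)$ --- is not justified. A quick sanity check in the equal--exponent case $n_1=n_2=n_3=n$: at a pole the divisor $(\mu)_0$ contributes $2$ to $N^{[2]}_\Phi(r,H_4)$ and can contribute another $2$ to one coordinate hyperplane, while feeding only $\ge n$ into $T_\Phi$; together with the zero contributions $2\cdot 3/n$ one lands at $1-10/n$ on the left, i.e.\ $n\ge 10$, not $n\ge 9$. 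The vague appeal to ``the ramification (Wronskian) term'' to recover the boundary case $\sum 1/n_i=1/3$ is not made precise and does not obviously close this deficit. So for the meromorphic half you should either supply the full pole--divisor bookkeeping and show it really reaches $1/3$ including equality, or adopt the paper's jet--differential argument, which gets the sharp threshold for free.
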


Finally,  we treat the Fermat  functional equation for small functions
\begin{equation}\label{ggg}
\alpha_1f_1^{n_1}+\cdots+\alpha_kf^{n_k}_k=1, \ \ k\geq2
\end{equation}
over $S,$ where $\alpha_j$ is a small function with respect to  $f_j$ for $1\leq j\leq k.$
\begin{thavii}\label{} 
There exist no non-trivial holomorphic solutions for $1/n_1+\cdots+1/n_k< 1/(k-1)$ of  $(\ref{ggg})$  over $S$  satisfying the growth condition
$$\liminf_{r\rightarrow\infty}\frac{\kappa(r)r^2}{\min\{\mathfrak T_{f_{1}}(r), \cdots, \mathfrak T_{f_{k}}(r)\}}=0.$$
 In particular,  the   conclusion holds for $\tilde S=\mathbb C$ without  growth condition.
\end{thavii}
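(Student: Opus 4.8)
The plan is to establish a Second Main Theorem / vanishing-type inequality on the open Riemann surface $S$, and then combine it with the Logarithmic Derivative Lemma in the spirit of the proofs of Theorems V and VI. First I would fix a non-trivial holomorphic solution $(f_1,\dots,f_k)$ of \eqref{ggg} and, after pulling everything back to the universal cover $\tilde S\in\{\mathbb C,\mathbb D\}$, view the map $F=[f_1:\cdots:f_k:1]:\tilde S\to\mathbb P^k$ (or the appropriate lower-dimensional projective space if the $f_j$ are not in general position). The relation $\alpha_1 f_1^{n_1}+\cdots+\alpha_k f_k^{n_k}=1$ says that $F$ omits, up to the small perturbations $\alpha_j$, a configuration of hypersurfaces, and the key point is that the $k$ hyperplane-type divisors $\{f_j=0\}$ together with the divisor cut out by the relation are in sufficiently general position for a defect relation to apply.

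The key steps, in order, are: (1) reduce to $\tilde S$ and record that the geometric growth term $\mathfrak T_{f_1,\dots,f_k}(r)$ is comparable (up to the curvature term $\kappa(r)r^2$ that appears in all the growth hypotheses) to the Nevanlinna characteristic of $F$ on the geodesic disc $D_o(r)$, using the Green-function representation given in the introduction; (2) differentiate the functional equation to obtain an algebraic identity expressing $1$ (or a nowhere-zero quantity) as a linear combination of the $f_j^{n_j}$ with coefficients that are products of $\alpha_j$, logarithmic derivatives $f_j'/f_j$, and derivatives $\alpha_j'/\alpha_j$ — this is the standard trick converting a Fermat-type equation with $k$ terms into a Wronskian-type relation; (3) apply the Logarithmic Derivative Lemma over $S$ (available from the earlier part of the paper, with the error term absorbed into $\kappa(r)r^2$ and $o(\mathfrak T)$) to control those coefficients by the characteristics of the $f_j$; (4) count zeros: each $f_j^{n_j}$ contributes zeros with multiplicity at least $n_j$, so the counting functions $N(r,1/f_j)$ are bounded by $\tfrac1{n_j}\mathfrak T_{f_j}(r)+o(\cdot)$; (5) assemble a First/Second Main Theorem inequality of the shape
\[
\sum_{j=1}^k \mathfrak T_{f_j}(r)\ \le\ (k-1)\sum_{j=1}^k \tfrac{1}{n_j}\,\mathfrak T_{f_j}(r)\ +\ \text{(small function terms)}\ +\ O\!\big(\kappa(r)r^2\big)+o\!\Big(\max_j \mathfrak T_{f_j}(r)\Big);
\]
then if $\sum 1/n_j<1/(k-1)$ the coefficient $(k-1)\sum 1/n_j$ is $<1$, so after moving that term to the left and using $\sum \mathfrak T_{f_j}\ge k\min_j \mathfrak T_{f_j}$ one gets $\min_j\mathfrak T_{f_j}(r)\le C\,\kappa(r)r^2+o(\min_j\mathfrak T_{f_j}(r))$, which contradicts the hypothesis $\liminf_{r\to\infty}\kappa(r)r^2/\min_j\mathfrak T_{f_j}(r)=0$ unless every $f_j$ is constant, i.e. the solution is trivial; and (6) specialize to $\tilde S=\mathbb C$, where $\kappa\equiv 0$ so the $\kappa(r)r^2$ term vanishes identically and no growth hypothesis is needed.

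The main obstacle I expect is step (2)–(3): making the ``$k$-term Fermat equation $\Rightarrow$ Wronskian/defect relation'' mechanism work cleanly \emph{with small-function coefficients} $\alpha_j$ rather than constants. One has to differentiate the relation $k-1$ times, form the relevant Wronskian determinant $W(\alpha_1 f_1^{n_1},\dots,\alpha_k f_k^{n_k})$, and show it does not vanish identically (which uses linear independence over $\mathbb C$ of the $\alpha_j f_j^{n_j}$ — here one must rule out, or separately dispatch, the degenerate case where some proper subsum vanishes, typically by induction on $k$); then expand each entry by the Leibniz rule and bound all the resulting ratios $\alpha_j^{(i)}/\alpha_j$ and $(f_j^{n_j})^{(i)}/f_j^{n_j}$ by $S(r)$-type quantities via the Logarithmic Derivative Lemma on $S$ and the smallness $\mathfrak T_{\alpha_j}(r)=o(\mathfrak T_{f_j}(r))$. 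Keeping careful track of which terms are genuinely $O(\kappa(r)r^2)$ (coming from the geometry/curvature in the $S$-version of the lemmas) versus $o(\mathfrak T)$ is the delicate bookkeeping; once that is in place the final arithmetic with the exponents $1/n_1+\cdots+1/n_k<1/(k-1)$ is immediate, exactly parallel to the $k=2,3$ cases already proved in Theorems V and VI.
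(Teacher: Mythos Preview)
Your proposal is correct and follows essentially the same route as the paper: differentiate the relation $k-1$ times, form the Wronskian of the $\alpha_jf_j^{n_j}$, bound the logarithmic-derivative entries by the LDL on $S$, and convert the high-multiplicity zeros of $f_j^{n_j}$ into the factor $(k-1)/n_j$, with the linearly dependent case handled by reduction to a shorter sum. The paper packages this as a defect relation (its Lemma~\ref{lem51}: $\sum_j\delta^{[k-1]}(\alpha_jf_j^{n_j},0)\le k-1$ under $\delta(\alpha_jf_j^{n_j},\infty)=1$) and works intrinsically on $S$ via the Green-function Nevanlinna functions, so your pull-back to $\tilde S$ is unnecessary; also note that the correct shape of the main inequality is $\max_j T(r,\alpha_jf_j^{n_j})\le\sum_j N^{[k-1]}(r,1/\alpha_jf_j^{n_j})+S(r)$ rather than the summed form you wrote in step~(5).
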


\section{A vanishing theorem for jet differentials}

Let $X$  be a  complex manifold with complex dimension $n.$  A holomorphic $k$-jet differential $\omega$ of  weighted degree $m$  on  $X$ 
is   a homogeneous polynomial 
in $d^i\zeta_j$ $(1\leq i\leq k, 1\leq j\leq n)$  
 of the form
$$\omega=\sum_{|l_1|+\cdots+k|l_k|=m}a_{l_1\cdots l_k}d\zeta^{l_1}\cdots d^k\zeta^{l_k}$$
with holomorphic  function coefficients $a_{l_1\cdots l_k},$  written in  a local holomorphic coordinate $\zeta=(\zeta_1,\cdots,\zeta_n).$ 
Let $D$ be a reduced divisor on $X.$ A logarithmic $k$-jet differential $\omega$ of degree $m$ along $D$ is  a $k$-jet differential of degree $m$ with  possible logarithmic poles along $D,$
i.e., along  $D,$ $\omega$ is locally  a homogeneous polynomial  in
$$d^s\log \sigma_1, \cdots,d^s\log\sigma_r,d^s\sigma_{r+1},\cdots,d^s\sigma_n, \ \ 1\leq s\leq k$$
of weighted degree $m,$ where $\sigma_1,\cdots,\sigma_r$ are irreducible, and $\sigma_1\cdots\sigma_r=0$ is a local defining equation of $D.$

Now let's  introduce a vanishing theorem for jet differentials  shown by the first author \cite{dong}. 
Let  $S$ be an open Riemann surface equipped with a complete  Hermitian metric   such that the  
   Gauss curvature
$K_S\leq0.$
Let $$f:S\rightarrow X$$  be a holomorphic curve into a compact complex  manifold $X.$ 
Let a positive (1,1)-form $\alpha$ on $X.$
 The \emph{Nevanlinna's characteristic} of $f$ with respect to $\alpha$ is defined  by
 \begin{eqnarray*}\label{}
   T_{f,\alpha}(r)
   &=& \pi\int_{D_o(r)}g_r(o,x)f^*\alpha,
 \end{eqnarray*}
 where $D_o(r)$ is the geodesic ball centered at  $o\in S$ with radius $r,$ and $g_r(o,x)$ is the  Green function of $\Delta_S/2$ for $D_o(r)$ with Dirichlet boundary 
 condition, and a pole at $o.$
 The definition for Nevanlinna's characteristic   is  very natural. When $S=\mathbb C,$ the Green function is $(\log\frac{r}{|z|})/\pi,$
 by integration by part,  one can verify  that
it agrees with the classical one.

\begin{theorem}[\cite{dong}]\label{thm2}
 Let $\omega$ be a logarithmic $k$-jet differential on  $X,$ vanishing along an ample divisor  $A$ on $X.$
 Let $f:S\rightarrow X$ be a holomorphic curve
such that $f(S)$ is disjoint from the log-poles of $\omega.$ If $f$  satisfies   
  the growth condition
$$\liminf_{r\rightarrow\infty}\frac{\kappa(r)r^2}{T_{f,A}(r)}=0,$$
where $\kappa$ is defined by $(\ref{kappa}),$ then $f^*\omega\equiv0$ on $S.$ In particular, the conclusion holds when $\tilde S=\mathbb C$ without growth condition.
\end{theorem}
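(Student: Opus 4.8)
The plan is to argue by contradiction: I assume $f^*\omega\not\equiv0$ and show that the growth condition must then fail. Since $f(S)$ avoids the log-poles of $\omega$ and $\omega$ has weighted degree $m$, the pullback $f^*\omega$ is a non-zero holomorphic section of $K_S^{\otimes m}$; in a local coordinate $z$ I write $f^*\omega=F(z)(dz)^m$ and measure it by the metric induced from $ds^2=2gdzd\bar z$, so that $\log\|f^*\omega\|=\log|F|-\tfrac{m}{2}\log(2g)$ (choosing $o$ with $f^*\omega(o)\neq0$). First I would extract a pointwise bound from the hypothesis that $\omega$ vanishes along $A$. Fixing a metric $h$ of positive curvature on $\mathcal O(A)$ and the canonical section $s_A$ with divisor $A$, one has $\omega=s_A\cdot\tilde\omega$ with $\tilde\omega$ having coefficients in $\mathcal O(-A)$; since $X$ is compact the coefficients of $\omega$ are dominated by $\|s_A\|_h$, and separating the coefficient factor from the jet factor gives
$$\log\|f^*\omega\|\ \leq\ \log\|s_A\circ f\|_h+\log\big\|J_kf\big\|+O(1),$$
where $\big\|J_kf\big\|$ collects the logarithmic-derivative quantities $f_j^{(i)}/f_j$ assembled from the components of $f$.

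Next I would integrate this inequality over $\partial D_o(r)$ against the harmonic measure $d\mu_r$ attached to $g_r(o,\cdot)$ and transform the left-hand side by the Green--Jensen formula on $S$. The Laplacian of $\log|F|$ yields the non-negative counting function $N_{\mathrm{Zero}(f^*\omega)}(r)$, while the Laplacian of $\log(2g)$ yields a curvature contribution bounded below by $-O\big(|\kappa(r)|r^2\big)$ in view of $(\ref{kappa})$. On the right-hand side, $\int_{\partial D_o(r)}\log\|s_A\circ f\|_h\,d\mu_r=-m_f(r,A)+O(1)$, and the decisive input is the Logarithmic Derivative Lemma on $S$, which bounds $\int_{\partial D_o(r)}\log\big\|J_kf\big\|\,d\mu_r$ by
$$S_f(r):=O\big(\log^+T_{f,A}(r)+\log r+|\kappa(r)|r^2\big)$$
outside an exceptional set of radii (here ampleness of $A$ is used, so that $T_{f,A}$ dominates every characteristic up to a bounded factor). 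Because the zeros of $f^*\omega=(s_A\circ f)\,f^*\tilde\omega$ include the pullback divisor of $A$, one has $N_{\mathrm{Zero}(f^*\omega)}(r)\geq N_f(r,A)$; combining these facts with the First Main Theorem $T_{f,A}(r)=m_f(r,A)+N_f(r,A)+O(1)$ collapses everything to
$$T_{f,A}(r)\ \leq\ S_f(r)+O\big(|\kappa(r)|r^2\big)+O(1).$$

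Finally I would invoke the growth hypothesis. Along a sequence $r_j\to\infty$ realising $\liminf_{r\to\infty}\kappa(r)r^2/T_{f,A}(r)=0$ the curvature errors are $o(T_{f,A}(r_j))$; provided $T_{f,A}(r_j)\to\infty$, the terms $\log^+T_{f,A}(r_j)$ and $\log r_j$ are negligible as well, so the last display reads $T_{f,A}(r_j)\leq o(T_{f,A}(r_j))$, which is absurd. The only alternative is that $T_{f,A}$ stays bounded, which forces $f$ to be degenerate with respect to the ample class and hence constant, whereby $f^*\omega\equiv0$ automatically since a jet differential of positive weighted degree annihilates constants --- contradicting $f^*\omega\not\equiv0$. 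When $\tilde S=\mathbb C$ one chooses the flat metric, so $\kappa\equiv0$, all curvature errors vanish, and the argument runs with no growth condition at all.

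I expect the main obstacle to be a clean statement and proof of the Logarithmic Derivative Lemma for $k$-jet differentials on a general open Riemann surface, with the error term honestly of the form $O\big(\log^+T_{f,A}(r)+\log r+|\kappa(r)|r^2\big)$: one must push a Borel/concavity estimate through the Green function $g_r(o,\cdot)$ of the non-flat metric and track exactly how the Gauss curvature enters, while verifying that the frame-dependence of $\big\|J_kf\big\|$ together with the $\mathcal O(-A)$-twist recombine to put precisely $T_{f,A}$ on the left. A secondary point is to control the exceptional set of radii so that it can be avoided along the sequence $r_j$ furnished by the $\liminf$ condition.
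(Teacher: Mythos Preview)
The paper does not supply its own proof of this theorem: it is quoted verbatim from \cite{dong} and used as a black box, so there is no argument in the present paper to compare your proposal against.

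That said, your outline is the standard route to such vanishing theorems (in the spirit of Green--Griffiths, Siu--Yeung and Demailly) and is consistent with the ingredients the paper does record from \cite{dong}: the characteristic $T_{f,A}$ built from the Green function $g_r(o,\cdot)$, and the Logarithmic Derivative Lemma (Lemma~\ref{ldl}) whose error term is exactly of the shape $O(\log T + \log^+\log r - \kappa(r)r^2)$ you need. The two technical points you flag --- making the LDL work for $k$-jet expressions with the curvature-weighted error and handling the exceptional set of radii against the $\liminf$ sequence --- are indeed the substantive issues, and they are precisely what \cite{dong} is cited for. One small correction: in your final paragraph, bounded $T_{f,A}$ on an open Riemann surface does not by itself force $f$ to be constant (think of bounded holomorphic maps from $\mathbb D$); rather, under the growth hypothesis $\liminf \kappa(r)r^2/T_{f,A}(r)=0$ together with $\kappa\leq 0$, bounded $T_{f,A}$ would force $\kappa(r)r^2\to 0$ along a sequence, and one argues from there (or simply observes that for nonconstant $f$ the characteristic is unbounded in the parabolic case $\tilde S=\mathbb C$, which is the ``in particular'' clause).
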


\section{Existence of  solutions of $k$-term Fermat functional equations}
Let $S$ be an open (connected) Riemannn surface. We consider    the  $k$-term Fermat functional equation (\ref{asdf}), i.e., 
$$
f_1^n+\cdots+f^n_k=1
$$
over $S.$   Let 
$\pi: \tilde S\rightarrow S$
be the analytic universal covering of $S.$  A non-trivial holomorphic (resp. meromorphic) solution $(f_1,\cdots,f_k)$ of  (\ref{asdf})  over $S$ can 
 lift to a non-trivial holomorphic (resp. meromorphic) solution 
$(f_1\circ \pi,\cdots,f_k\circ\pi)$ of $(\ref{asdf})$ 
over $\tilde S.$ 
On the other hand, if $(F_1,\cdots,F_k)$ is a non-trivial holomorphic (resp. meromorphic) solution of (\ref{asdf}) over $\tilde S,$  then 
$(F_1\circ\alpha,\cdots,F_k\circ\alpha)$ turns out to be a non-trivial holomorphic (resp. meromorphic) solution of  (\ref{asdf}) over $S$ for 
 a suitable non-constant holomorphic mapping $\alpha: S\rightarrow \tilde S.$ It yields that  
 
\begin{theorem}\label{asd00} Eq. $(\ref{asdf})$ admits a non-trivial holomorphic $($resp. meromorphic$)$ solution over $S$ if and only if Eq. $(\ref{asdf})$ admits a 
non-trivial holomorphic $($resp. meromorphic$)$ solution over $\tilde S.$
\end{theorem}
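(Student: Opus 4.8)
The plan is to pass solutions back and forth between $S$ and $\tilde S$ by precomposing with a non-constant holomorphic map, using that the relation $f_1^n+\cdots+f_k^n=1$, together with holomorphy and meromorphy, is preserved under such precomposition. I would prove the two implications separately.

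For ``solution over $S$ $\Rightarrow$ solution over $\tilde S$'': let $(f_1,\dots,f_k)$ be a non-trivial holomorphic (resp.\ meromorphic) solution over $S$ and let $\pi\colon\tilde S\to S$ be the analytic universal covering, a surjective non-constant local biholomorphism. Put $F_j:=f_j\circ\pi$. Each $F_j$ is holomorphic (resp.\ meromorphic) on $\tilde S$ --- in the meromorphic case its polar set is $\pi^{-1}$ of the discrete polar set of $f_j$, hence discrete --- and $\sum_j F_j^n=\bigl(\sum_j f_j^n\bigr)\circ\pi\equiv 1$. Since $\pi$ is surjective, a polynomial relation holds among the $F_j$ on $\tilde S$ exactly when the corresponding relation holds among the $f_j$ on $S$, so whatever the precise meaning of ``non-trivial,'' it is inherited by $(F_1,\dots,F_k)$; this implication is then complete.

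For the converse, starting from a non-trivial solution $(F_1,\dots,F_k)$ over $\tilde S$, I would seek a non-constant holomorphic map $\alpha\colon S\to\tilde S$ and set $f_j:=F_j\circ\alpha$. Then $\sum_j f_j^n\equiv 1$ on $S$, and non-triviality persists because $\alpha(S)$ is a non-empty open subset of $\tilde S$: a non-constant $F_j$ cannot be constant on $\alpha(S)$ by the identity theorem, and any relation satisfied by the $f_j$ on $S$ continues analytically from $\alpha(S)$ to a relation satisfied by the $F_j$ on all of $\tilde S$. So everything reduces to constructing $\alpha$. By the uniformization theorem $\tilde S$ is $\mathbb{C}$ or $\mathbb{D}$ (the sphere is excluded, $S$ being open). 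If $\tilde S=\mathbb{C}$, then $S$, as a non-compact quotient of $\mathbb{C}$, is biholomorphic to $\mathbb{C}$ or to $\mathbb{C}^{*}$, and one takes $\alpha=\mathrm{id}$ in the first case and $\alpha$ equal to the inclusion $\mathbb{C}^{*}\hookrightarrow\mathbb{C}$ (which is not the covering map $\exp$) in the second; in both cases $\alpha$ is a non-constant holomorphic map into $\tilde S$, and the transport step above finishes this subcase.

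The step I expect to be the genuine obstacle is the case $\tilde S=\mathbb{D}$: there a non-constant holomorphic map $\alpha\colon S\to\mathbb{D}$ amounts to a non-constant \emph{bounded} holomorphic function on $S$, and this is the point at which one must use a function-theoretic property of $S$ --- in the hyperbolic setting of the paper, precisely its hyperbolicity --- to guarantee such a function exists. One has to be careful, since for an arbitrary open Riemann surface with $\tilde S=\mathbb{D}$ such a map need not exist at all; for instance $\mathbb{C}\setminus\{0,1\}$ carries no non-constant bounded holomorphic function, both of its finite punctures being removable singularities. Thus the core of the proof in this case is to extract such a function from the hypotheses on $S$; rescaling its range into $\mathbb{D}$ and applying the transport step then completes the argument.
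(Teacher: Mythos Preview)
Your approach is exactly the paper's: lift solutions via the covering $\pi:\tilde S\to S$ in one direction, and in the other precompose with a non-constant holomorphic $\alpha:S\to\tilde S$. The paper's entire argument is those two sentences, and it simply asserts that a ``suitable'' $\alpha$ exists without any justification.

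You have gone further than the paper and correctly located the obstruction: when $\tilde S=\mathbb D$, a non-constant holomorphic $\alpha:S\to\mathbb D$ is the same thing as a non-constant bounded holomorphic function on $S$, and your example $S=\mathbb C\setminus\{0,1\}$ shows this need not exist. In fact your example does more than block this particular proof strategy --- it shows the statement is \emph{false} as written. For $k=2$ and $n\ge 4$ the projective Fermat curve $C_n$ has genus $(n-1)(n-2)/2\ge 3$ and is therefore Kobayashi hyperbolic; by the big Picard theorem every holomorphic map $\mathbb C\setminus\{0,1\}\to C_n$ extends across the three punctures to a holomorphic map $\mathbb P^1\to C_n$, which must be constant. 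Hence $(\ref{asdf})$ has no non-trivial holomorphic solution over $\mathbb C\setminus\{0,1\}$, whereas over $\tilde S=\mathbb D$ the paper exhibits one explicitly. There are no additional hypotheses on $S$ in Theorem~\ref{asd00} from which to ``extract'' a bounded holomorphic function; the theorem as stated cannot be proved. In the paper it is only applied toward Theorem~I, where $S$ is assumed ``hyperbolic'', and there one must read that hypothesis as guaranteeing a non-constant bounded holomorphic function on $S$ --- but that is a genuine extra assumption, not a consequence of $\tilde S=\mathbb D$.
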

In what follows, we shall prove by using construction that  there exist non-trivial holomorphic and meromorphic solutions of (\ref{asdf}) when $S$ is hyperbolic. Note from Theorem \ref{asd00} that we only need to handle the case when  $S=\mathbb D.$

\emph{$(i)$ Non-trivial holomorphic solution.}  Let 
$$f_j=a_jz, \ \ j=2,\cdots, k,$$ where $a_2,\cdots a_k$ are  nonzero constants such that  $|a_2^n+\cdots+a_k^n|\leq1.$
Then 
\begin{equation}\label{xxx}
f_1^n=1-a_2^nz^n+\cdots-a_k^nz^n.
\end{equation}
We prove that there is a holomorphic function $f_1$ on $\mathbb D$ satisfying (\ref{xxx}).
Notice that  $\log(1+z)$ is holomorphic on $\mathbb D$ with a Taylor  expansion   
$$\log(1+z)=z-\frac{1}{2}z^2+\frac{1}{3}z^3-\cdots$$
So, $\phi(z):=\log(1-a_2^nz^n-\cdots-a_k^nz^n)$ is holomorphic on $\mathbb D.$ Taking $f_1=e^{\phi/n},$ which is holomorphic  
on $\mathbb D$ and satisfied with (\ref{xxx}). 

\emph{$(ii)$ Non-trivial meromorphic solution.}  We consider three cases:

$a)$  $k=2.$ 
Let $$f_2=az^{-1},$$
where $a$ is a constant such that $|a|\geq1.$ 
It yields  that  $1-f_2^n=(z^n-a^n)/z^n,$ and $\phi_1:=\log(z^n-a^n)$  is holomorphic  
on $\mathbb D.$ Taking $f_1=z^{-1}e^{\phi_1/n},$ which  is meromorphic   on $\mathbb D$ and
    $f_1, f_2$  satisfy  (\ref{asdf}) over $\mathbb D.$

$b)$  $k=3.$ 
Let $$f_2=a_2z^{-1}, \ \ f_3=a_3z^{-1},$$
where $a,b$ are nonzero constants such that $|a_2^n+a_3^n|\geq1.$ Pick $f_1=z^{-1}e^{\phi_2/n},$ where
 $\phi_2:=\log(z^n-a_2^n-a_3^n).$
Then, $(f_1, f_2, f_3)$ is a non-trivial meromorphic solution  satisfying  (\ref{asdf}) over $\mathbb D.$

$c)$  $k\geq4.$ 
Fix a constant $b\not=0.$  Let $$f_2=\sqrt[n]{b}z^{-1}, \ \  f_3=\sqrt[n]{-b}z^{-1}, \ \ f_4=\cdots=f_k=az,$$
where $a\not=0$ is a  constant with $|a\sqrt[n]{k-3}|\leq1.$
So, $\phi_3=\log(1-(k-3)a^nz^n)$ is holomorphic on $\mathbb D.$ Pick $f_1=e^{\phi_3/n},$ which is holomorphic  
on $\mathbb D$ and $f_1,\cdots,f_k$ satisfy (\ref{asdf}).  
We give another non-trivial meromorphic solution as follows
 $$f_1= z^{-1}e^{\frac{\phi_4}{n}}, \  \  f_j=a_jz^{-1}, \ \  j=2,\cdots, k,$$
where $\phi_4:=\log(z^n-a_2^n-\cdots-a_k^n),$ and $a_2,\cdots,a_k$ are nonzero constants such that 
$|a_2^n+\cdots+a_k^n|\geq1.$

According to the above examples  and Theorem \ref{asd00}, we obtain 

\begin{theorem}\label{sss} 
 There exist  non-trivial  holomorphic and meromorphic  solutions of $(\ref{asdf})$ if $S$ is hyperbolic. 
\end{theorem}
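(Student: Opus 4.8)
The plan is to reduce everything to the unit disc and then exhibit solutions by hand. By the uniformization theorem, $S$ being hyperbolic means its analytic universal covering $\tilde S$ equals $\mathbb D$, so by Theorem \ref{asd00} it suffices to produce non-trivial holomorphic and meromorphic solutions of (\ref{asdf}) over $\mathbb D$. The structural fact I would lean on throughout is that $\mathbb D$ is simply connected: any zero-free holomorphic function on $\mathbb D$ admits a holomorphic logarithm, hence a holomorphic $n$-th root. Thus the recipe for every solution is the same: prescribe all but one of the $f_j$ as elementary explicit functions (a constant times $z$, or a constant times $z^{-1}$), solve the equation algebraically for the remaining $f_1^n$, and verify that the resulting expression is zero-free on $\mathbb D$ so that an $n$-th root can be extracted.

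For the holomorphic case I would take $f_j=a_jz$ for $j=2,\dots,k$ with nonzero constants satisfying $|a_2^n+\cdots+a_k^n|\le 1$. Then $f_1^n=1-(a_2^n+\cdots+a_k^n)z^n$, and since $|z|<1$ forces $|a_2^nz^n+\cdots+a_k^nz^n|<1$, the function $1-a_2^nz^n-\cdots-a_k^nz^n$ is zero-free on $\mathbb D$; hence $\phi:=\log(1-a_2^nz^n-\cdots-a_k^nz^n)$ is holomorphic on $\mathbb D$ and $f_1:=e^{\phi/n}$ completes a non-trivial holomorphic solution.

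For the meromorphic case I would split into $k=2$, $k=3$, and $k\ge 4$. For $k=2$, take $f_2=az^{-1}$ with $|a|\ge 1$, so $1-f_2^n=(z^n-a^n)/z^n$ with $z^n-a^n$ zero-free on $\mathbb D$; set $f_1=z^{-1}e^{\phi_1/n}$, $\phi_1:=\log(z^n-a^n)$. The case $k=3$ is identical with $a^n$ replaced by $a_2^n+a_3^n$ and the constants chosen so that $|a_2^n+a_3^n|\ge 1$. For $k\ge 4$ I would first cancel two terms by choosing $f_2=\sqrt[n]{b}\,z^{-1}$, $f_3=\sqrt[n]{-b}\,z^{-1}$ with $b\ne 0$, so that $f_2^n+f_3^n=0$, and then reuse the holomorphic-style construction on the remaining terms (with $f_4=\cdots=f_k=az$, $|a\sqrt[n]{k-3}|\le 1$, $f_1=e^{\phi_3/n}$, $\phi_3:=\log(1-(k-3)a^nz^n)$); a second, genuinely meromorphic family is obtained by taking $f_j=a_jz^{-1}$ for all $j\ge 2$ with $|a_2^n+\cdots+a_k^n|\ge 1$ and $f_1=z^{-1}e^{\phi_4/n}$, $\phi_4:=\log(z^n-a_2^n-\cdots-a_k^n)$.

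The only point requiring attention — which I would regard as the substantive step rather than a real obstacle — is checking that the chosen modulus inequalities on the constants are precisely what keeps the relevant polynomial zero-free on $\mathbb D$, so the logarithm and hence the $n$-th root genuinely exist, and verifying that the resulting tuples are non-trivial (no component vanishes identically). Once the solutions over $\mathbb D$ are in hand, Theorem \ref{asd00} transports them back to $S$ by composition with any non-constant holomorphic map $\alpha\colon S\to\mathbb D$, which exists exactly because $S$ is hyperbolic.
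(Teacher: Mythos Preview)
Your proposal is correct and follows essentially the same approach as the paper: reduce to $\mathbb D$ via Theorem \ref{asd00}, then construct explicit solutions using the same choices of $f_j$ (constants times $z$ or $z^{-1}$) and the same modulus inequalities on the coefficients to guarantee zero-freeness and hence holomorphic $n$-th roots. The only cosmetic difference is that the paper justifies the existence of the logarithm via the Taylor expansion of $\log(1+z)$ on $\mathbb D$, whereas you invoke simple connectedness directly; these are equivalent.
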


We proceed to consider the generalized $k$-term Fermat functional equation 
\begin{equation}\label{fer11}
f_1^{n_1}+\cdots+f_k^{n_k}=1
\end{equation}
over $S.$  

\begin{theorem}\label{asd} Eq. $(\ref{fer11})$ admits a non-trivial holomorphic $($resp. meromorphic$)$ solution over $S$ if and only if Eq. $(\ref{fer11})$  admits a 
non-trivial holomorphic $($resp. meromorphic$)$ solution over $\tilde S.$
\end{theorem}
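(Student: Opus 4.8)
The approach is to repeat the proof of Theorem \ref{asd00} essentially verbatim: that equivalence is purely covering-theoretic — it uses only that (\ref{fer11}) is preserved under composing each $f_j$ with a holomorphic map, together with the universal covering — so the fact that the exponents in (\ref{asdf}) were all equal plays no role. Write $\pi:\tilde S\to S$ for the analytic universal covering, a non-constant, open, surjective holomorphic map, and recall (Behnke--Stein) that the open Riemann surface $S$ carries non-constant holomorphic functions.

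\emph{Forward implication.} If $(f_1,\cdots,f_k)$ is a non-trivial holomorphic (resp. meromorphic) solution of (\ref{fer11}) over $S$, then $f_1\circ\pi,\cdots,f_k\circ\pi$ are holomorphic (resp. meromorphic) on $\tilde S$ and satisfy $(f_1\circ\pi)^{n_1}+\cdots+(f_k\circ\pi)^{n_k}=1$; since $\pi$ is surjective, composition with $\pi$ preserves the non-constancy and the non-vanishing of each component, so the lifted tuple is again non-trivial.

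\emph{Reverse implication.} If $(F_1,\cdots,F_k)$ is a non-trivial holomorphic (resp. meromorphic) solution of (\ref{fer11}) over $\tilde S$, fix a non-constant holomorphic map $\alpha:S\to\tilde S$ and put $f_j:=F_j\circ\alpha$; then $(f_1,\cdots,f_k)$ solves (\ref{fer11}) over $S$, and it remains non-trivial because if $F_{j_0}$ is non-constant its fibres are discrete in $\tilde S$ whereas $\alpha(S)$ is open by the open mapping theorem, so $F_{j_0}\circ\alpha$ is non-constant (and likewise $F_{j_0}\circ\alpha\not\equiv 0$ whenever $F_{j_0}\not\equiv 0$). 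When $\tilde S=\mathbb C$, any non-constant holomorphic function on $S$ serves as $\alpha$; when $\tilde S=\mathbb D$, one needs a non-constant holomorphic map $\alpha:S\to\mathbb D$, which I would produce exactly as in the proof of Theorem \ref{asd00}.

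The one step that is not formal bookkeeping is this last one — the existence of the auxiliary map $\alpha:S\to\mathbb D$, equivalently of a non-constant bounded holomorphic function on the hyperbolic surface $S$ — and that is the point I expect to be the main obstacle; everything else is a word-for-word transcription of the argument for (\ref{asdf}).
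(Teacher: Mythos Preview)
Your approach is exactly the paper's: Theorem~\ref{asd} carries no separate proof, the implicit argument being that the covering-theoretic discussion preceding Theorem~\ref{asd00} applies verbatim since the exponents play no role. You transcribe that discussion correctly and, to your credit, isolate the one substantive step --- the existence of a non-constant holomorphic $\alpha:S\to\tilde S$ when $\tilde S=\mathbb D$. But this step is a genuine gap, in your write-up and in the paper's. The paragraph before Theorem~\ref{asd00} merely asserts the existence of ``a suitable non-constant holomorphic mapping $\alpha:S\to\tilde S$''; it does not construct one, so your deferral ``exactly as in the proof of Theorem~\ref{asd00}'' points to something that is not actually there. When $\tilde S=\mathbb D$ such an $\alpha$ is a non-constant bounded holomorphic function on $S$, and open surfaces with universal cover $\mathbb D$ need not carry any: take $S=\mathbb C\setminus\{0,1\}$, whose universal cover is $\mathbb D$ via the modular $\lambda$-function, yet by Riemann's removable singularity theorem every bounded holomorphic function on $S$ extends to a bounded entire function and is constant.

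Worse, for this $S$ the reverse implication itself fails, so the gap cannot be patched by a different choice of $\alpha$. For $k=2$ and $n_1=n_2=n\ge 4$ the Fermat curve $C_n$ is compact of genus $\ge 3$, hence Kobayashi-hyperbolic; by the big Picard extension theorem every holomorphic map $\mathbb C\setminus\{0,1\}\to C_n$ extends across the three punctures to a holomorphic map $\mathbb P^1(\mathbb C)\to C_n$, which is then constant. Thus (\ref{fer11}) has no non-trivial holomorphic (or meromorphic) solution over $S=\mathbb C\setminus\{0,1\}$, while it does over $\tilde S=\mathbb D$ by the explicit constructions of Section~3. The obstacle you anticipated is real.
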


In what follows, we construct non-trivial holomorphic and meromorphic solutions of (\ref{fer11}) over $\mathbb D.$

\emph{$(i)$ Non-trivial holomorphic solution.}  Set  $n=p_jn_j$ with $1\leq j \leq k,$ where 
$n=[n_1,\cdots,n_k]$ is the lowest common multiple.  Let 
$$f_1= e^{\frac{\psi_1}{n_1}}, \ \  f_j=a_jz^{p_j},  \  \  j=2,\cdots, k,$$
where $\psi_1:=\log(1-a_2^{n_2}z^n-\cdots-a_k^{n_k}z^n),$ and $a_2,\cdots,a_k$ are nonzero constants such that $|a_2^{n_2}+\cdots+a_k^{n_k}|\leq1.$
It is not very difficult to check that $(f_1,\cdots,f_k)$ is a non-trivial holomorphic solution of (\ref{fer11}) over $\mathbb D.$

\emph{$(ii)$ Non-trivial meromorphic solution.}   Let 
$$f_1= z^{-p_1}e^{\frac{\psi_2}{n_1}}, \ \  f_j=a_jz^{-p_j},  \  \  j=2,\cdots, k,$$
where $\psi_2:=\log(z^n-a_2^{n_2}-\cdots-a_k^{n_k}),$ and $a_2,\cdots,a_k$ are nonzero constants such that $|a_2^{n_2}+\cdots+a_k^{n_k}|\geq1.$
We can  check that $(f_1,\cdots,f_k)$ is a non-trivial meromorphic solution of (\ref{fer11}) over $\mathbb D.$ 

Therefore, we conclude that 

\begin{theorem}\label{sss1} 
 There exist  non-trivial  holomorphic and meromorphic  solutions of $(\ref{fer11})$ if $S$ is hyperbolic. 
\end{theorem}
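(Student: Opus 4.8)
The statement to establish is that the generalized $k$-term Fermat functional equation $(\ref{fer11})$ admits non-trivial holomorphic and meromorphic solutions whenever $S$ is hyperbolic. By Theorem \ref{asd}, the existence of such a solution over $S$ is equivalent to its existence over the universal cover $\tilde S$, which — since $S$ is hyperbolic — is the unit disc $\mathbb D$. Thus the plan is to reduce immediately to the case $S = \mathbb D$ and then exhibit explicit solutions there.

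For the holomorphic case, I would set $n = [n_1,\dots,n_k]$ to be the least common multiple and write $n = p_j n_j$ with $p_j \in \mathbb N$. The key observation is that the partial sum $1 - a_2^{n_2}z^n - \cdots - a_k^{n_k}z^n$ has value $1$ at the origin, so by choosing nonzero constants $a_2,\dots,a_k$ with $|a_2^{n_2}+\cdots+a_k^{n_k}|\le 1$ this expression is a holomorphic function on $\mathbb D$ which avoids the negative real axis far enough that $\psi_1 := \log(1 - a_2^{n_2}z^n-\cdots-a_k^{n_k}z^n)$ is a well-defined holomorphic function on $\mathbb D$ (using the standard branch of $\log(1+w)$ convergent for $|w|<1$). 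Then $f_1 := e^{\psi_1/n_1}$ is holomorphic on $\mathbb D$ and satisfies $f_1^{n_1} = e^{\psi_1} = 1 - a_2^{n_2}z^n-\cdots-a_k^{n_k}z^n$, while $f_j := a_j z^{p_j}$ gives $f_j^{n_j} = a_j^{n_j} z^{p_j n_j} = a_j^{n_j} z^n$; summing verifies $(\ref{fer11})$, and the tuple is non-trivial since the $f_j$ are non-constant (or non-zero). For the meromorphic case the same idea applies at $z=\infty$ instead: with $\psi_2 := \log(z^n - a_2^{n_2} - \cdots - a_k^{n_k})$ and $|a_2^{n_2}+\cdots+a_k^{n_k}|\ge 1$, the argument $z^n - a_2^{n_2} - \cdots - a_k^{n_k}$ is holomorphic and non-vanishing on $\mathbb D$ when the $a_j$ are chosen appropriately (its modulus stays positive since $|z^n| < 1 \le |a_2^{n_2}+\cdots+a_k^{n_k}|$ forces the value away from $0$, after possibly rotating the $a_j$ so that no cancellation reaches $0$), so $\psi_2$ is holomorphic on $\mathbb D$; then $f_1 := z^{-p_1} e^{\psi_2/n_1}$ and $f_j := a_j z^{-p_j}$ are meromorphic on $\mathbb D$, and $f_1^{n_1} = z^{-n}(z^n - a_2^{n_2}-\cdots-a_k^{n_k}) = 1 - \sum_{j\ge 2} a_j^{n_j} z^{-n} = 1 - \sum_{j\ge 2} f_j^{n_j}$.

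The only genuinely delicate point is ensuring that the logarithms are globally well-defined on $\mathbb D$, i.e. that the arguments $1 - a_2^{n_2}z^n - \cdots - a_k^{n_k}z^n$ and $z^n - a_2^{n_2} - \cdots - a_k^{n_k}$ are non-vanishing holomorphic functions on the simply connected domain $\mathbb D$; once that is checked, $\log$ of a non-vanishing holomorphic function on a simply connected domain exists and is holomorphic, and the rest is the routine algebraic verification of $(\ref{fer11})$ sketched above. In the holomorphic case one can even take the $|a_j^{n_j}|$ small so that $|a_2^{n_2}z^n + \cdots + a_k^{n_k}z^n| < 1$ on all of $\mathbb D$, making non-vanishing automatic; in the meromorphic case one chooses the arguments of the $a_j$ to avoid exact cancellation at $0$ (for instance $a_2^{n_2} = \cdots = a_k^{n_k}$ all equal to a single positive real, or a single term), so that the argument of the logarithm has image in a half-plane not containing $0$. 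Having produced these explicit solutions over $\mathbb D$, Theorem \ref{asd} transports them to $S$, completing the proof.
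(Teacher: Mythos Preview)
Your proposal is correct and follows essentially the same approach as the paper: reduce to $\mathbb D$ via Theorem \ref{asd}, then construct the identical explicit solutions $f_1=e^{\psi_1/n_1},\ f_j=a_jz^{p_j}$ (holomorphic case) and $f_1=z^{-p_1}e^{\psi_2/n_1},\ f_j=a_jz^{-p_j}$ (meromorphic case) with $n=[n_1,\dots,n_k]=p_jn_j$. Your additional care about the non-vanishing of the arguments of $\psi_1,\psi_2$ is welcome; note that in the meromorphic case the condition $|a_2^{n_2}+\cdots+a_k^{n_k}|\ge 1$ already forces $|z^n|<1\le|a_2^{n_2}+\cdots+a_k^{n_k}|$ on $\mathbb D$, so $z^n-(a_2^{n_2}+\cdots+a_k^{n_k})\ne0$ automatically and no extra rotation of the $a_j$ is needed.
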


To end this section, we list some examples for the existence of holomorphic and meromorphic solutions of (\ref{asdf}) over $S$ for $k=2,3.$

\noindent\textbf{A. Examples for $k=2$}~

\noindent\textbf{Case $n=2$}

This case is easy. Actually, we can factorize (\ref{asdf})  as  $(f_1+if_2)(f_1-if_2)=1.$ Let  $\alpha=f_1+if_2,$ one can verify that each
 holomorphic (resp. meromorphic)  solution of (\ref{asdf})  over $S$ is of the form 
$$f_1=\frac{\alpha+\alpha^{-1}}{2}, \ \  f_2=\frac{\alpha-\alpha^{-1}}{2i},$$
where $\alpha$ is a holomorphic (resp. meromorphic) function on $S.$ 

\noindent\textbf{Case $n=3$}

Let $\wp$ be the Weierstrass elliptic function on $\mathbb C$  satisfying 
$$(\wp')^2=4\wp^3-1.$$
 Set 
$$\gamma_1=\frac{1}{2\wp}\Big(1-3^{-1/2}\wp'\Big), \ \  \
 \gamma_2=\frac{\varpi}{2\wp}\Big(1+3^{-1/2}\wp'\Big),$$
 where $\varpi$ is a cube-root of unity.  Baker \cite{ba} proved that every meromorphic solution of  (\ref{asdf}) over $\mathbb C$ is of the form
$$ f_1=\gamma_1\circ \alpha, \ \ \ f_2=\gamma_2\circ \alpha,
$$ where $\alpha$ is a holomorphic function on $\mathbb C.$ 
To construct a meromorphic solution of (\ref{asdf}) over $S,$ one  just needs to choose  a holomorphic function $\alpha$ on  $S.$ 
Namely,  
$f_1=\gamma_1\circ  \alpha, f_2=\gamma_2\circ \alpha$
 is   a meromorphic solution of (\ref{asdf}) over $S$ for a  holomorphic function $\alpha$ on $S.$

\noindent\textbf{B. Examples for $k=3$}~

 Let $\alpha, \beta$ be non-constant  holomorphic and meromorphic functions  
  respectively on $S$.  Following constructions of Gundersen-Tohge \cite{gund, gun, gun-toh},  Green \cite{green} and Lehmer \cite{leh}, 
 we have the following examples:

\noindent\textbf{Case $n=2$}

$a)$  \emph{holomorphic solutions}
$$f_1=3^{-1/2}(\alpha^2-2), \ \ f_2=3^{-1/2}(\alpha^2+1)i, \ \ f_3=\sqrt2\alpha.$$

$b)$  \emph{meromorphic solutions}
$$f_1=3^{-1/2}(\beta^2-2), \ \ f_2=3^{-1/2}(\beta^2+1)i, \ \ f_3=\sqrt2\beta.$$

\noindent\textbf{Case $n=3$}

$a)$  \emph{holomorphic solutions}
$$f_1=9\alpha^4, \ \ f_2=-9\alpha^4+3\alpha, \ \ f_3=-9\alpha^3+1.$$

$b)$  \emph{meromorphic solutions}
$$f_1=9\beta^4, \ \ f_2=-9\beta^4+3\beta, \ \ f_3=-9\beta^3+1.$$

\noindent\textbf{Case $n=4$}

$a)$  \emph{holomorphic solutions}
$$
f_1=2^{-3/4}(e^{3\alpha}+e^{-\alpha}), \ \ f_2=(-2)^{-3/4}(e^{3\alpha}-e^{-\alpha}), \ \ 
f_3=(-1)^{1/4}e^{2\alpha}.
$$

$b)$  \emph{meromorphic solutions}

The approach of construction of a holomorphic solution for $n=4$ in above example can be used to construct  meromorphic (non-holomorphic)   solutions for $n=4,$ 
see Green \cite{green}.

\noindent\textbf{Case $n=5$}

$a)$  \emph{holomorphic solutions}
\begin{eqnarray*}
f_1&=&\frac{1}{3}\big((2-\sqrt 6)e^{\alpha}+(2+\sqrt 6)e^{-\alpha}+1\big),  \\
f_2&=&\frac{1}{6}\big((\sqrt 6-2+3\sqrt 2i-2\sqrt 3i)e^{\alpha}-
(\sqrt 6+2-3\sqrt 2i-2\sqrt 3i)e^{-\alpha}
+2\big), \\
f_3&=&\frac{1}{6}\big((\sqrt 6-2+3\sqrt 2i-2\sqrt 3i)e^{\alpha}-
(\sqrt 6+2+3\sqrt 2i+2\sqrt 3i)e^{-\alpha}
+2\big).
\end{eqnarray*}

 $b)$ \emph{meromorphic solutions}
$$f_1=\gamma_1\circ\alpha, \ \ f_2=e^{\pi i/5}\gamma_2\circ\alpha \cdot \gamma_3\circ \alpha, \ \ f_3=\gamma_3\circ\alpha,$$
where $\gamma_1,\gamma_2$ are meromorphic functions on $\mathbb C$ given by 
$$\gamma_1=1+\frac{1}{p_1+p_2e^z}, \ \ \gamma_2=1+\frac{1}{p_1+p_2e^{-z}}$$
with $$p_1=\frac{a_3a_4-a_1a_2}{a_3+a_4-a_1-a_2}, \ \ p_2=\sqrt{(p_1-a_1)(p_1-a_2)}, \ \ a_k=\frac{1}{e^{2k\pi i/5}-1},$$
and
$\gamma_3$ is a meromorphic function  on $\mathbb C$  satisfying 
$$\gamma_3^5=\frac{\gamma_1^5-1}{\gamma_2^5-1}.$$

\noindent\textbf{Cases $n=6, 7, 8$}

A  non-holomorphic meromorphic solution of (\ref{asdf}) for $n=6$ exists by using the  construction  of Gundersen 
over $\mathbb C.$  Gundersen \cite{gund} proved that  there exist a 
non-holomorphic meromorphic  solution $(F_1, F_2, F_3)$ of (\ref{asdf}) over $\mathbb C.$
Now let $f_1=F_1\circ\alpha, f_2=F_2\circ\alpha, f_3=F_3\circ\alpha,$ then $f_1,f_2,f_3$ solve (\ref{asdf}) over $S.$

It is unknown that  whether there exists a non-trivial holomorphic solution of (\ref{asdf}) for $n=6,$ and a
 non-trivial meromorphic (non-holomorphic) solution of (\ref{asdf})  for $n=7,8$
 for a general  open Riemann surface $S.$
However, it  always exists non-trivial solutions  if $S$ is hyperbolic since Theorem \ref{sss}.

\section{Non-existence of  solutions of 2-term and 3-term Fermat functional equations}

In this section,  we employ a vanishing theorem for jet differentials to study the  non-existence of  non-trivial holomorphic and meromorphic solutions of 2-term and 3-term Fermat functional equations over open Riemann surfaces.
It  is mentioned  that the jet differential approach was  used by Ng-Yeung \cite{ny}.

\subsection{2-term Fermat functional equations}~

Let  $\omega_{FS}$ be the Fubini-Study form on $\mathbb P^2(\mathbb C)$ with  homogeneous coordinate  $[X: Y: Z].$  
Set  $x=X/Z, y=Y/Z.$ We 
treat the 2-term Fermat functional equation
\begin{equation}\label{2term}
f^n+g^n=1
\end{equation}
over $S.$   A Fermat curve of degree $n$ in $\mathbb P^2(\mathbb C)$ is defined by
$$
C_n: \ X^n+Y^n=Z^n, 
$$
which is a compact Riemann surface of genus $(n-1)(n-2)/2$ and  its   affine form  is written as 
\begin{equation}\label{fermat10}
x^n+y^n=1.
\end{equation}
A  holomorphic or meromorphic solution $(f,g)$ of (\ref{2term})  is viewed as a holomorphic curve $\mathfrak C: S\rightarrow C_n \subset\mathbb P^2(\mathbb C)$  by 
$$x\mapsto [f(x): g(x): 1], \ \  \ ^\forall x\in S.$$
Locally, write $\mathfrak C$   in a holomorphic form 
$\mathfrak C=[\hat f: \hat g: e]$
 with 
 $$f=\hat f/e, \ \  g=\hat g/e.$$ The Nevanlinna's characteristic  of $\mathfrak C$ with respect to $\omega_{FS}$ is defined by 
 \begin{equation*}
T_{\mathfrak C,\omega_{FS}}(r)=\frac{1}{4}\int_{D_o(r)}g_r(o,x)\Delta_S\log( |\hat f(x)|^2+|\hat g(x)|^2+|e(x)|)dV(x).
\end{equation*}
 Differentiating (\ref{fermat10}) to get  
\begin{eqnarray*}
x^{n-1}dx+y^{n-1}dy &=&0.
\end{eqnarray*}
Apply Crammer's rule to this equation and (\ref{fermat10}),  it yields  that 
\begin{equation}\label{aaas}
\Phi:=\frac{dy}{x^{n-1}}
=\frac{-dx}{y^{n-1}}
=
\left |\begin{array}{cccc}
x & y \\
dx & dy  
\end{array}\right|
\end{equation}
which is a  1-jet differential on $C_n.$ We prove that $\Phi$ is holomorphic for $n\geq 3$ and vanishing along $\infty$ for $n\geq 4,$ where $\infty:=(Z=0)\cap C_n$ denotes an ample divisor on  $C_n.$  
From the last term of (\ref{aaas}), one verifies that $\Phi$ is holomorphic on the affine part  $\{Z\not=0\}\cap C_n.$
Now, let us look at the pole order of $\Phi$ at  $\infty.$ The numerator $dy$ in the first term of (\ref{aaas}) 
 gives a pole of order 2 at $\infty,$ 
 and the corresponding denominator $x^{n-1}$ gives rise to a zero of order $n-1$ of $\Phi$  at 
$\infty.$
 Hence, $\Phi$  has a pole of order $3-n$ at $\infty.$  This implies that  
 $\Phi$ is holomorphic when $n\geq 3$ and vanishing along $\infty$ when  $n\geq 4.$

Define
$$\mathfrak T_{f,g}(r):=\frac{1}{4}\int_{D_o(r)}g_r(o,x)\Delta_S\log(1+ |f(x)|^2+|g(x)|^2)dV(x).$$
Clearly, 
 $$\mathfrak T_{f,g}(r)\leq T_{\mathfrak C, \omega_{FS}}(r).$$ 
\begin{theorem}\label{t1} For  $n\geq 4,$ there exist no non-trivial meromorphic solutions of  $(\ref{2term})$  satisfying the growth condition
$$\liminf_{r\rightarrow\infty}\frac{\kappa(r)r^2}{\mathfrak T_{f,g}(r)}=0,$$
where $\kappa$ is defined by $(\ref{kappa}).$  In particular,   there are no non-trivial meromorphic solutions  for  $n\geq 4$ if  $\tilde S=\mathbb C.$
\end{theorem}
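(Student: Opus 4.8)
The plan is to translate the functional-equation problem into a statement about holomorphic curves and then apply Theorem \ref{thm2}. Given a non-trivial meromorphic solution $(f,g)$ of $(\ref{2term})$, I would form the holomorphic curve $\mathfrak C\colon S\to C_n\subset\mathbb P^2(\mathbb C)$ defined by $x\mapsto[f(x):g(x):1]$. Since $f^n+g^n=1$, the image lands in the Fermat curve $C_n$. For $n\geq 4$ we have already established that the $1$-jet differential $\Phi=\,|\,x\ \ y\ ;\ dx\ \ dy\,|$ on $C_n$ is holomorphic and vanishes along the ample divisor $\infty=(Z=0)\cap C_n$. Moreover $\mathfrak C$ omits the log-poles of $\Phi$ trivially, since $\Phi$ has no log-poles (it is an honest holomorphic jet differential for $n\geq 4$). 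Thus the curve $\mathfrak C$ satisfies the hypotheses of Theorem \ref{thm2} with $A=\infty$, provided the growth condition
$$\liminf_{r\to\infty}\frac{\kappa(r)r^2}{T_{\mathfrak C,\infty}(r)}=0$$
holds. Here I would compare $T_{\mathfrak C,\infty}(r)$ with $\mathfrak T_{f,g}(r)$: since $\infty$ is linearly equivalent to a positive multiple of a hyperplane section, $T_{\mathfrak C,\infty}(r)=c\,T_{\mathfrak C,\omega_{FS}}(r)+O(1)$ for some constant $c>0$, and we have recorded $\mathfrak T_{f,g}(r)\le T_{\mathfrak C,\omega_{FS}}(r)$. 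Hence the hypothesis $\liminf_r \kappa(r)r^2/\mathfrak T_{f,g}(r)=0$ forces the same with $T_{\mathfrak C,\infty}(r)$ in the denominator (one must be mildly careful if $T_{\mathfrak C,\omega_{FS}}$ is bounded, i.e. $\mathfrak C$ is constant, but that case is the trivial solution and is excluded).

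Applying Theorem \ref{thm2} then gives $\mathfrak C^*\Phi\equiv 0$ on $S$. The next step is to extract a contradiction from this vanishing. Pulling back the middle expression of $(\ref{aaas})$, we get $\mathfrak C^*\Phi = -\,g^{\,1-n}\,df = f^{\,1-n}\,dg$ (as meromorphic $1$-forms on $S$, away from zeros/poles), so $\mathfrak C^*\Phi\equiv 0$ means $df\equiv 0$, i.e. $f$ is constant, and likewise $g$ is constant. But then $(f,g)$ is a trivial (constant) solution, contradicting the assumed non-triviality. Alternatively, and perhaps more cleanly, $\mathfrak C^*\Phi\equiv 0$ says the derivative of the curve $\mathfrak C$ is everywhere tangent to the kernel of $\Phi$; since $\Phi$ is a nowhere-vanishing holomorphic $1$-form on the affine part (its only zeros are being absorbed into poles at $\infty$ in a balanced way for $n\ge 3$), $\mathfrak C$ must be constant. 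Either way, the conclusion is that no non-trivial meromorphic solution can exist.

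I should also address the lift to the universal cover, mirroring the structure used earlier in the paper: by Theorem \ref{asd00} a non-trivial meromorphic solution over $S$ exists iff one exists over $\tilde S$, so when $\tilde S=\mathbb C$ the growth condition is automatically satisfied (this is the "in particular" clause, which follows from the final sentence of Theorem \ref{thm2}). For the general $S$ I would keep working directly on $S$ with the metric of non-positive curvature, so that $\kappa$ and $g_r(o,x)$ are as defined in $(\ref{kappa})$ and the statement of Theorem \ref{thm2} applies verbatim.

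The main obstacle I anticipate is purely bookkeeping rather than conceptual: making the comparison $T_{\mathfrak C,\infty}(r)\asymp \mathfrak T_{f,g}(r)$ rigorous, including handling the local holomorphic representative $[\hat f:\hat g:e]$ and the fact that $f,g$ are only meromorphic (so that $\Phi$ pulls back to a meromorphic, a priori not holomorphic, object on $S$) — one must check that $\mathfrak C$ genuinely maps into $C_n$ with image disjoint from the log-poles, which is automatic here only because $\Phi$ has no log-poles for $n\geq 4$. The other point requiring a line of care is ruling out the degenerate case $\mathfrak T_{f,g}(r)=O(1)$ separately, but that case corresponds to $(f,g)$ constant and is excluded by non-triviality.
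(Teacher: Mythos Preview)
Your approach is correct and essentially the same as the paper's: form the holomorphic curve $\mathfrak C\colon S\to C_n$, use $\mathfrak T_{f,g}(r)\le T_{\mathfrak C,\omega_{FS}}(r)$ to transfer the growth hypothesis, and invoke Theorem~\ref{thm2} with the holomorphic $1$-jet differential $\Phi$ vanishing along $\infty$ to obtain $\mathfrak C^*\Phi\equiv0$. The only difference is in the endgame: the paper reads $\mathfrak C^*\Phi=0$ via the Wronskian expression $x\,dy-y\,dx=0$, solves to get $g=af$, and then substitutes into $f^n+g^n=1$ to force constancy, whereas you read it via $\Phi=-dx/y^{n-1}=dy/x^{n-1}$ to get $df\equiv0$ and $dg\equiv0$ directly (or, equivalently, use that $\Phi$ is nowhere vanishing on the affine part so $\mathfrak C$ must be constant). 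Your route is marginally more direct; the paper's route has the mild advantage that it generalizes verbatim to the $3$-term case, where the analogous ODE $\mathscr D^2$-argument yields $g^n=af^n+b$ rather than immediate constancy.
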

\begin{proof}
When  $n\geq 4,$  $\Phi$  (given by (\ref{aaas})) is a holomorphic 1-jet differential on $C_n$ which vanishes along  $\infty.$ 
The growth condition leads to
$$\liminf_{r\rightarrow\infty}\frac{\kappa(r)r^2}{T_{\mathfrak C,\omega_{FS}}(r)}=0$$
since $\mathfrak T_{f,g}(r)\leq T_{\mathfrak C, \omega_{FS}}(r).$ 
Invoking Theorem \ref{thm2}, we obtain $\mathfrak C^*\Phi\equiv0.$ Hence,  
 $\mathfrak C$ satisfies the differential equation 
$$xdy-ydx=0$$
which is solved generally by  
$y=ax,$
where $a$ is an  arbitrary constant. Hence, we obtain  
$g=af.$ Combine this with (\ref{2term}), 
we can prove  the theorem.
\end{proof}

Now, we consider  holomorphic solutions of  (\ref{2term}). Rewrite (\ref{2term}) as the form
\begin{equation}\label{2term1}
F^n+1=G^n
\end{equation}
with 
$$F=f/g, \ \   G=1/g.$$
Accordingly,  (\ref{fermat10}) is written as
\begin{equation}\label{fermat21}
u^n+1=v^n
\end{equation}
with 
$$u=x/y=X/Y, \ \   v=1/y=Z/Y.$$
Differentiating (\ref{fermat21}) to get  
\begin{eqnarray*}
u^{n-1}du-v^{n-1}dv&=&0,
\end{eqnarray*}
It yields from  Crammer's rule that 
\begin{equation}\label{aaa}
\Psi:=\frac{dv}{u^{n-1}}
=\frac{du}{v^{n-1}}
=
\left |\begin{array}{cccc}
v & u \\
dv & du  
\end{array}\right|,
\end{equation}
which is a 1-jet differetial on $C_n.$  
Set 
\begin{equation}\label{eta1}
\eta=\frac{1}{v}\Psi,
\end{equation}
 which is a logarithmic 1-jet differential with log-poles along $v=0$ for $n\geq 2,$ and vanishing along the ample divisor $(Y=0)\cap C_n$  for $n\geq 3.$ The argument is standard and similar to  before.

\begin{theorem}\label{t2} For  $n\geq 3,$ there exists no non-trivial holomorphic solutions of  $(\ref{2term})$  satisfying the growth condition
$$\liminf_{r\rightarrow\infty}\frac{\kappa(r)r^2}{\mathfrak T_{f,g}(r)}=0,$$
where $\kappa$ is defined by $(\ref{kappa}).$  In particular,  there are no non-trivial meromorphic solutions  for  $n\geq 3$ if  $\tilde S=\mathbb C.$
\end{theorem}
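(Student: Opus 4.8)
The plan is to run the argument of the proof of Theorem \ref{t1}, but with the \emph{logarithmic} $1$-jet differential $\eta$ of (\ref{eta1}) in place of the holomorphic differential $\Phi$. A logarithmic differential is forced here: the coordinate change $F=f/g$, $G=1/g$ adapted to \emph{holomorphic} solutions acquires poles at the zeros of $g$, so there is no holomorphic $1$-jet differential on $C_n$ available in the chart $(u,v)=(X/Y,Z/Y)$; the saving feature is that, by (\ref{eta1}), the log-poles of $\eta=\Psi/v=du-(u/v)dv$ lie only along $\{v=0\}=(Z=0)\cap C_n$, a locus that the image of any holomorphic solution avoids automatically.

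First I would set up the curve. Given a non-trivial holomorphic solution $(f,g)$ of (\ref{2term}), form the holomorphic curve $\mathfrak C:S\to C_n$, $x\mapsto[f(x):g(x):1]$; it is non-constant because one of $f,g$ is. Since the third homogeneous coordinate never vanishes, $\mathfrak C(S)$ is disjoint from the log-poles $(Z=0)\cap C_n$ of $\eta$ (the curve may meet the ample divisor $A:=(Y=0)\cap C_n$, precisely over the zeros of $g$, but this is harmless: $\eta$ vanishes there, to order $n-2\ge1$ for $n\ge3$). Because $\mathcal O_{C_n}(A)=\mathcal O_{\mathbb P^2}(1)|_{C_n}$ we have $T_{\mathfrak C,A}(r)=T_{\mathfrak C,\omega_{FS}}(r)+O(1)$, and with $\mathfrak T_{f,g}(r)\le T_{\mathfrak C,\omega_{FS}}(r)$ and $\kappa\le0$ the hypothesis $\liminf_{r\to\infty}\kappa(r)r^2/\mathfrak T_{f,g}(r)=0$ forces $\liminf_{r\to\infty}\kappa(r)r^2/T_{\mathfrak C,A}(r)=0$. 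Then Theorem \ref{thm2}, applied to $\omega=\eta$, the ample divisor $A$, and $\mathfrak C$, yields $\mathfrak C^{*}\eta\equiv0$ on $S$; when $\tilde S=\mathbb C$ the growth hypothesis is not needed.

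It then remains to unwind this identity. Writing $u\circ\mathfrak C=f/g=:F$ and $v\circ\mathfrak C=1/g=:G$, one computes $\mathfrak C^{*}\eta=dF-(F/G)dG=dF+f\,dg/g^{2}=df/g$. Hence $df\equiv0$, so $f$ is constant; then $g^{n}=1-f^{n}$ is constant, forcing $g$ constant as well, which contradicts the non-triviality of $(f,g)$. The case $\tilde S=\mathbb C$ follows from the corresponding clause of Theorem \ref{thm2}.

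The hard part will not be the logic — which is a near-verbatim transcription of the proof of Theorem \ref{t1} — but the divisor bookkeeping that the text compresses into ``the argument is standard and similar to before'': one must verify that the log-poles of $\eta$ sit \emph{exactly} along $Z=0$ and not along $Y=0$ (so that a holomorphic, as opposed to meromorphic, solution is genuinely disjoint from them), and that $\eta$ vanishes to order $n-2$ at the $n$ points of $(Y=0)\cap C_n$. I would check this in the chart $X=1$, where $(u,v)=(1/\tilde u,\tilde v/\tilde u)$ turns $\eta$ into $-d\tilde v/(\tilde u\tilde v)$ and the defining equation $1+\tilde u^{n}=\tilde v^{n}$ shows $\tilde v-\zeta$ vanishes to order $n$ along each branch $\tilde u=0$, whence $\eta\sim-\tilde u^{\,n-2}d\tilde u$ there. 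Once this is in hand, the remaining steps are routine.
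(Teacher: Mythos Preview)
Your proof is correct and follows essentially the same route as the paper's: both apply Theorem~\ref{thm2} to the logarithmic $1$-jet differential $\eta=\Psi/v$ of (\ref{eta1}), using that a holomorphic solution gives a curve $\mathfrak C=[f:g:1]$ whose image avoids the log-pole locus $(Z=0)\cap C_n$, and that $\eta$ vanishes along the ample divisor $(Y=0)\cap C_n$ for $n\ge3$. The only cosmetic difference is in the final unwinding: the paper argues that $\mathfrak C^*\eta\equiv0$ forces $\mathfrak C^*\Psi\equiv0$ (since $v^{-1}$ is finite and nonzero on the image), solves $u\,dv-v\,du=0$ to get $v=au$, and reads off that $f,g$ are constant, whereas you compute $\mathfrak C^*\eta=df/g$ directly and conclude $df\equiv0$---these are the same computation in different guises.
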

\begin{proof}  Let $(f,g)$ be a  holomorphic solution of (\ref{2term}).  Note that  $G=1/g$ omits the value $0,$ this means that $\mathfrak C(S)$ avoids the log-poles of $\eta$ given by (\ref{eta1}) for $n\geq 2.$ Moreover, $\eta$ vanishes along the ample divisor $(Y=0)\cap C_n$  for $n\geq 3.$ 
By Theorem \ref{thm2}, we have   $\mathfrak C^*\eta\equiv0$ under the assumed  growth condition, 
i.e.,   $\mathfrak C^*v^{-1}\equiv0$ or  $\mathfrak C$ satisfies the differential equation 
$$\left |\begin{array}{cccc}
u & v \\
 du &  dv  
\end{array}\right|=0.$$
The first  case is  handled trivially. For the else case, we obtain   
$v=au,$
where $a$ is an  arbitrary constant. This implies that $f,g$ are constants. 
\end{proof}

\subsection{3-term Fermat functional equations}~

Let  $\omega_{FS}$ be the Fubini-Study form on $\mathbb P^3(\mathbb C)$ with  homogeneous coordinate  $[X: Y: Z: W].$  
Set  $x=X/W, y=Y/W, z=Z/W.$ 
We consider the 3-term Fermat functional equation
\begin{equation}\label{3term}
f^n+g^n+h^n=1
\end{equation}
over $S.$ A Fermat surface of degree $n$ in $\mathbb P^3(\mathbb C)$ is defined by
$$
S_n: \ X^n+Y^n+Z^n=W^n,
$$
which is a smooth complex surface and its affine form  is written as 
\begin{equation}\label{fermat1}
x^n+y^n+z^n=1.
\end{equation}
A  holomorphic or meromorphic solution $(f,g,h)$ of (\ref{3term}) can be regarded  as a holomorphic curve $\mathfrak C: S\rightarrow S_n \subset\mathbb P^3(\mathbb C)$  by 
$$x\mapsto [f(x): g(x): h(x): 1], \ \  \ ^\forall x\in S.$$
Locally, write $\mathfrak C$   in a holomorphic form 
$\mathfrak C=[\hat f: \hat g: \hat h: e]$
 with 
  $$f=\hat f/e, \ \  g=\hat g/e, \ \  h=\hat h/e.$$
   The Nevanlinna's characteristic  of $\mathfrak C$  with respect to $\omega_{FS}$ is defined by 
 \begin{equation*}
T_{\mathfrak C,\omega_{FS}}(r)=\frac{1}{4}\int_{D_o(r)}g_r(o,x)\Delta_S\log( |\hat f(x)|^2+|\hat g(x)|^2+|\hat h(x)|^2+|e(x)|^2)dV(x).
\end{equation*}
Differentiating (\ref{fermat1}) to get  
\begin{eqnarray*}
x^{n-1}dx+y^{n-1}dy+z^{n-1}dz&=&0, \\
x^{n-1}\mathscr D^2x+y^{n-1}\mathscr D^2y+z^{n-1}\mathscr D^2z&=&0,
\end{eqnarray*}
where $$\mathscr D^2\psi:=d^2\psi+(n-1)d\psi^2/\psi$$ for a function $\psi.$ Apply Crammer's rule to the  two equations as well as (\ref{fermat1}),  
\begin{equation}\label{aa}
\Phi:=\frac{\left |\begin{array}{cccc}
dy & dz \\
\mathscr D^2 y & \mathscr D^2z  \\
\end{array}\right|}{x^{n-1}}
=\frac{\left |\begin{array}{cccc}
dz & dx \\
\mathscr D^2 z & \mathscr D^2x  \\
\end{array}\right|}{y^{n-1}}
=\frac{\left |\begin{array}{cccc}
dx & dy \\
\mathscr D^2 x & \mathscr D^2y  \\
\end{array}\right|}{z^{n-1}}=
\left |\begin{array}{cccc}
x & y & z \\
dx & dy & dz \\
\mathscr D^2 x & \mathscr D^2 y & \mathscr D^2z 
\end{array}\right|
\end{equation}
which is a 2-jet differential on $S_n.$ 
Set 
\begin{equation}\label{w}
\omega=xyz\Phi,
\end{equation}
 which is holomorphic when $n\geq 8$ and vanishing along $\infty$ when
 $n\geq 9,$ where $\infty:=(W=0)\cap S_n$ is an ample divisor on  $S_n.$  The argument states  as follows.
Observing that $\mathscr D^2x, \mathscr D^2y, \mathscr D^2z$  are only of simple poles at $x=0, y=0, z=0$ respectively, hence $\omega$ is holomorphic on the affine part $\{W\not=0\}\cap S_n$ 
due to the last term of (\ref{aa}). Next, we look at the pole order of $\Phi$ at  $\infty.$ Expanding $\Phi$ (the third term in (\ref{aa})) to get 
\begin{eqnarray*}
\Phi=\frac{dxd^2y-dyd^2x+(n-1)(d\log y-d\log x)dxdy}{z^{n-1}}.
\end{eqnarray*}
By a direct computation,  we  obtain  $$dxd^2y-dyd^2x=d\big(\frac{dy}{dx}\big)dx^2$$
which  has a pole of order 4 at $\infty.$ Moreover, the denominator $z^{n-1}$ gives rise to a zero of order $n-1$ of $\Phi$  at 
$\infty.$
 Therefore, $\omega$  has a pole of order $8-n$ at $\infty.$  So, 
 $\omega$ is holomorphic for $n\geq 8$ and vanishing along $\infty$ for  $n\geq 9.$

Define
$$\mathfrak T_{f,g,h}(r):=\frac{1}{4}\int_{D_o(r)}g_r(o,x)\Delta_S\log(1+ |f(x)|^2+|g(x)|^2+|h(x)|^2)dV(x).$$
Clearly, 
 $$ \mathfrak T_{f,g,h}(r)\leq T_{\mathfrak C, \omega_{FS}}(r).$$ 
\begin{theorem}\label{t3} For  $n\geq 9,$ there exist  no non-trivial meromorphic solutions of  $(\ref{3term})$  satisfying the growth condition
$$\liminf_{r\rightarrow\infty}\frac{\kappa(r)r^2}{\mathfrak T_{f,g,h}(r)}=0,$$
where $\kappa$ is defined by $(\ref{kappa}).$  In particular,   there are no non-trivial meromorphic solutions  for  $n\geq 9$ if  $\tilde S=\mathbb C.$
\end{theorem}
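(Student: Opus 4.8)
The plan is to mimic exactly the structure of the proof of Theorem \ref{t1}, replacing the $1$-jet differential $\Phi$ on $C_n$ by the $2$-jet differential $\omega = xyz\,\Phi$ on $S_n$ constructed above. First I would observe that the assumed growth condition, combined with the inequality $\mathfrak T_{f,g,h}(r)\leq T_{\mathfrak C,\omega_{FS}}(r)$, yields
$$\liminf_{r\rightarrow\infty}\frac{\kappa(r)r^2}{T_{\mathfrak C,\omega_{FS}}(r)}=0.$$
Since for $n\geq 9$ the form $\omega$ is a holomorphic $2$-jet differential on $S_n$ vanishing along the ample divisor $\infty = (W=0)\cap S_n$, and since $\mathfrak C$ is a holomorphic curve into $S_n$ (with no log-poles to avoid, as $\omega$ has none), Theorem \ref{thm2} applies directly and gives $\mathfrak C^*\omega\equiv 0$ on $S$.

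Next I would translate $\mathfrak C^*\omega\equiv 0$ into a differential equation for $(f,g,h)$. Pulling back the last determinant expression in (\ref{aa}) and using $\omega = xyz\Phi$, one gets
$$fgh\cdot\left|\begin{array}{ccc} f & g & h \\ df & dg & dh \\ \mathscr D^2 f & \mathscr D^2 g & \mathscr D^2 h \end{array}\right|\equiv 0$$
on $S$. On the open set where $fgh\neq 0$ this forces the $3\times 3$ Wronskian-type determinant to vanish identically, hence (by analytic continuation) it vanishes on all of $S$. The vanishing of this determinant means that $f,g,h$ are linearly dependent over $\mathbb C$ in an appropriate jet sense; more precisely, because the second row is $d$ of the first and the third row is the image of the first under the operator $\mathscr D^2$ (which is a connection-type second-order operator), the three functions together with the relation $f^n+g^n+h^n=1$ are forced into a one-dimensional family. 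The cleanest way to exploit this is to note that $\mathscr D^2$ was engineered so that $x^{n-1}\mathscr D^2 x + y^{n-1}\mathscr D^2 y + z^{n-1}\mathscr D^2 z = 0$; combined with the vanishing Wronskian, this should force $\mathfrak C$ to map into a line (or a rational curve of low degree) inside $S_n$, and one then checks that $C_n$ for large $n$ contains no such curves, or that the resulting restricted equation has only constant/trivial solutions.

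The main obstacle I anticipate is precisely this last algebro-geometric step: extracting from $\mathfrak C^*\omega\equiv 0$ a genuine contradiction with non-triviality. In the $k=2$ case the analogous determinant $xdy-ydx=0$ integrates immediately to $g=af$, which plugged into $f^n+g^n=1$ gives constants; here the $2$-jet determinant does not integrate so transparently, and one must argue that a holomorphic curve in $S_n$ annihilating this particular Wronskian must be algebraically degenerate, then invoke that the Fermat surface $S_n$ with $n\geq 9$ contains no rational or low-genus curves through which a non-constant such curve could factor (this is where the hypothesis $n\geq 9$, rather than merely $n\geq 8$, is used — it is the threshold at which $\omega$ vanishes along $\infty$, giving the strict positivity needed in Theorem \ref{thm2}). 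I would handle this by first reducing to the affine equation, showing the vanishing determinant implies $dh/dg$ and $dg/df$ satisfy an algebraic relation that, together with (\ref{fermat1}), pins $\mathfrak C$ down to finitely many explicit curves, each of which is easily seen to admit no non-trivial meromorphic parametrization. The rest of the proof is then routine, paralleling the concluding lines of Theorems \ref{t1} and \ref{t2}.
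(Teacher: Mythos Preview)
Your setup through $\mathfrak C^*\omega\equiv 0$ is correct and matches the paper exactly. The gap is in what comes next: you work with the $3\times 3$ determinant and then gesture at ``algebraic degeneracy'' and ``$S_n$ contains no rational or low-genus curves,'' admitting yourself that you do not have a concrete argument. The paper does not use any such global facts about curves on $S_n$.

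The point you are missing is that $\Phi$ has \emph{several} equivalent expressions in (\ref{aa}) via Cramer's rule, and the useful one here is not the $3\times 3$ determinant but the $2\times 2$ numerator
\[
\left|\begin{array}{cc} dx & dy \\ \mathscr D^2 x & \mathscr D^2 y \end{array}\right|
\]
(divided by $z^{n-1}$). So once $\mathfrak C^*\omega\equiv 0$, either one of $f,g,h$ is identically zero --- which reduces (\ref{3term}) to a $2$-term equation and is killed by Theorem~\ref{t1} --- or this $2\times 2$ determinant vanishes along $\mathfrak C$. That is an ODE in just two variables,
\[
d\Big(\frac{dy}{dx}\Big)dx^2+(n-1)\,d\log\!\Big(\frac{y}{x}\Big)\,dx\,dy=0,
\]
which integrates explicitly to $y^n=ax^n+b$, i.e.\ $g^n=af^n+b$ for constants $a,b$. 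A short case split on whether $a$ or $b$ vanishes then feeds this back into $f^n+g^n+h^n=1$ to produce a genuine $2$-term Fermat equation in each case, and Theorem~\ref{t1} (for $n\geq 4$, hence certainly $n\geq 9$) finishes the job. No geometry of rational curves on $S_n$ is needed; everything reduces to the already-proved $2$-term result.
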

\begin{proof}
If  $n\geq 9,$ then $\omega$  (defined by (\ref{w})) is a holomorphic 2-jet differential on $S_n$ vanishing along  $\infty.$ 
The growth condition implies that 
$$\liminf_{r\rightarrow\infty}\frac{\kappa(r)r^2}{T_{\mathfrak C,\omega_{FS}}(r)}=0$$
due to $ \mathfrak T_{f,g,h}(r)\leq T_{\mathfrak C, \omega_{FS}}(r).$ 
By Theorem \ref{thm2}, it yields that $\mathfrak C^*\omega\equiv0.$ Thus,  
 $\mathfrak C^*x\equiv0$ or $\mathfrak C^* y\equiv0$ or $\mathfrak C^*z\equiv0;$ or else
 $\mathfrak C$ satisfies the differential equation
\begin{equation}\label{hhh1}
\left |\begin{array}{cccc}
dx & dy \\
\mathscr D^2 x & \mathscr D^2y  \\
\end{array}\right|=0.
\end{equation}
\ \ \ \ 
If $\mathfrak C^*x\equiv0,$  then  (\ref{3term}) reduces to $g^n+h^n=1.$ Invoking Theorem \ref{t1},  
there exists no  non-trivial holomorphic solution, and so does (\ref{3term}). 
The  arguments are applicable to both  cases  
$\mathfrak C^* y\equiv0$ and $\mathfrak C^*z\equiv0.$
For the  else case,  it yields from  (\ref{hhh1}) that 
$$d\big(\frac{dy}{dx}\big)dx^2+(n-1)\big(d\log\frac{y}{x}\big)dxdy=0$$
which is solved generally by  
$y^n=ax^n+b,$
where $a,b$ are  arbitrary constants. Hence,  we conclude that  
\begin{equation}\label{w11}
g^n-af^n=b.
\end{equation}
If $ab\not=0,$ then  (\ref{w11}) has no non-trivial holomorphic solutions due to Theorem \ref{t1}, and so does (\ref{3term}). If $a=0,$  then 
  $f^n+h^n=1-b.$ 
 Invoking Theorem \ref{t1} again, then we  also  verify  that (\ref{3term}) has no non-trivial holomorphic solutions.  
If  $b=0,$ then $(1+a)f^n+h^n=1.$ The similar argument will verify this case. We conclude the proof.
\end{proof}

Now, we consider  holomorphic solutions of (\ref{3term}).  Rewrite (\ref{3term}) as the form
\begin{equation}\label{3term1}
F^n+G^n+1=H^n
\end{equation}
with 
$$F=f/h, \ \ G=g/h, \ \ H=1/h.$$
Accordingly,  (\ref{fermat1}) is written as
\begin{equation}\label{fermat2}
u^n+v^n+1=w^n
\end{equation}
with 
$$u=x/z=X/Z, \ \  v=y/z=Y/Z,  \ \  w=1/z=W/Z.$$
Differentiating (\ref{fermat2}) to get  
\begin{eqnarray*}
u^{n-1}du+v^{n-1}dv-w^{n-1}dw&=&0, \\
u^{n-1}\mathscr D^2u+v^{n-1}\mathscr D^2v-w^{n-1}\mathscr D^2w&=&0.
\end{eqnarray*}
Apply  Crammer's rule  to the two equations as well as (\ref{fermat2}), 
$$
\Psi:=\frac{\left |\begin{array}{cccc}
dw & dv \\
\mathscr D^2 w & \mathscr D^2v  \\
\end{array}\right|}{u^{n-1}}
=\frac{\left |\begin{array}{cccc}
dw & du \\
\mathscr D^2 w & \mathscr D^2u  \\
\end{array}\right|}{v^{n-1}}
=\frac{\left |\begin{array}{cccc}
du & dv \\
\mathscr D^2 u & \mathscr D^2v  \\
\end{array}\right|}{w^{n-1}}=
\left |\begin{array}{cccc}
u & v & w \\
du & dv & dw \\
\mathscr D^2 u & \mathscr D^2 v & \mathscr D^2w 
\end{array}\right|
$$ 
which is a 2-jet differential on $S_n.$ 
Set 
\begin{equation}\label{eta}
\eta=\frac{uv}{w}\Psi,
\end{equation}
 which is a holomorphic logarithmic 2-jet differential with only  log-poles along $w=0$ when $n\geq 6,$ and 
 vanishing along the ample divisor $(Z=0)\cap S_n$  when $n\geq 7.$ The argument is standard and similar to before. 

\begin{theorem}\label{t4} For  $n\geq 7,$ there exist no non-trivial holomorphic solutions of  $(\ref{3term})$  satisfying the growth condition
$$\liminf_{r\rightarrow\infty}\frac{\kappa(r)r^2}{\mathfrak T_{f,g,h}(r)}=0,$$
where $\kappa$ is defined by $(\ref{kappa}).$  In particular,   there are no non-trivial holomorphic solutions  for  $n\geq 7$ if  $\tilde S=\mathbb C.$
\end{theorem}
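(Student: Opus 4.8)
The plan is to adapt the jet-differential argument behind Theorems \ref{t1}--\ref{t3}, now using the logarithmic $2$-jet differential $\eta=\frac{uv}{w}\Psi$ attached to the affine model (\ref{fermat2}) of the Fermat surface; allowing logarithmic poles in $\eta$ is what lowers the threshold from $n\ge 9$ (Theorem \ref{t3}) to $n\ge 7$, just as Theorem \ref{t2} improved on Theorem \ref{t1}. Let $(f,g,h)$ be a non-trivial holomorphic solution of (\ref{3term}). If $h\equiv 0$, then (\ref{3term}) collapses to $f^n+g^n=1$, and since $n\ge 7\ge 3$ Theorem \ref{t2} forces $f,g$ constant (the growth hypothesis passes over because $\mathfrak T_{f,g}(r)=\mathfrak T_{f,g,0}(r)$); so assume $h\not\equiv 0$ and regard the solution as the holomorphic curve $\mathfrak C=[f:g:h:1]\colon S\to S_n\subset\mathbb P^3(\mathbb C)$. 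The last homogeneous coordinate of $\mathfrak C$ is identically $1$, so $\mathfrak C(S)$ never meets the hyperplane section $(W=0)\cap S_n$; equivalently $\mathfrak C^*w=1/h$ omits the value $0$, and hence $\mathfrak C(S)$ is disjoint from the log-poles of $\eta$. For $n\ge 7$, $\eta$ vanishes along the ample divisor $A:=(Z=0)\cap S_n$, which is a hyperplane section of $S_n$; therefore $T_{\mathfrak C,A}(r)=T_{\mathfrak C,\omega_{FS}}(r)+O(1)\ge \mathfrak T_{f,g,h}(r)+O(1)$, so the assumed growth condition yields $\liminf_{r\to\infty}\kappa(r)r^2/T_{\mathfrak C,A}(r)=0$. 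Theorem \ref{thm2} now gives $\mathfrak C^*\eta\equiv 0$, and no growth hypothesis is needed when $\tilde S=\mathbb C$.

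Next I would unwind $\mathfrak C^*\eta\equiv 0$. If $f\equiv 0$, then (\ref{3term}) reduces to $g^n+h^n=1$ and Theorem \ref{t2} (still with $n\ge 7\ge 3$) forces $g,h$ constant, contradicting non-triviality; the case $g\equiv 0$ is identical. Assume therefore $f,g\not\equiv 0$. Writing $\Psi=w^{-(n-1)}\left|\begin{smallmatrix}du & dv\\ \mathscr D^2u & \mathscr D^2v\end{smallmatrix}\right|$ and using that $\mathfrak C^*$ commutes with $d$, $d^2$, and $\mathscr D^2$, one computes
$$\mathfrak C^*\eta=f\,g\,h^{\,n-2}\left|\begin{array}{cc} dF & dG\\ \mathscr D^2 F & \mathscr D^2 G\end{array}\right|,\qquad F=\frac{f}{h},\quad G=\frac{g}{h}.$$
Since $fgh^{n-2}\not\equiv 0$ (recall $h\not\equiv 0$), the identity $\mathfrak C^*\eta\equiv 0$ forces $\left|\begin{smallmatrix} dF & dG\\ \mathscr D^2 F & \mathscr D^2 G\end{smallmatrix}\right|\equiv 0$.

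It remains to exploit this differential equation. Exactly as in the proof of Theorem \ref{t3}, using $dF\,d^2G-dG\,d^2F=d(dG/dF)\,dF^2$, it reads
$$d\Big(\frac{dG}{dF}\Big)dF^2+(n-1)\Big(d\log\frac{G}{F}\Big)dF\,dG=0,$$
whose general solution is $G^n=aF^n+b$ for constants $a,b$, i.e.\ $g^n=af^n+bh^n$. Substituting into (\ref{3term}) gives $(1+a)f^n+(1+b)h^n=1$. If $(1+a)(1+b)\ne 0$, then rescaling $f$ and $h$ by suitable constant $n$-th roots converts this into a $2$-term Fermat equation $\tilde f^{\,n}+\tilde h^{\,n}=1$ whose characteristic differs from $\mathfrak T_{f,g,h}(r)$ by $O(1)$, so Theorem \ref{t2} forces $\tilde f,\tilde h$, hence $f,h$, and then $g$, to be constant. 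If $1+a=0$, then $b\ne -1$ (otherwise $0=1$), so $h^n=(1+b)^{-1}$ is constant and $f^n+g^n$ is constant; when this constant is nonzero a further rescaling and Theorem \ref{t2} again force $f,g$ constant, while when it is zero $g=\zeta f$ with $\zeta^n=-1$ and the solution degenerates ($h$ is constant and the proper sub-sum $f^n+g^n\equiv 0$), hence is trivial. The case $1+b=0$ is symmetric. In every branch $(f,g,h)$ is trivial, which proves the theorem.

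I expect the main obstacle to be the last step: identifying which factor of $\mathfrak C^*\eta$ must vanish and then running the case analysis on the resulting algebraic relation down to $2$-term Fermat equations, while checking that (i) the growth hypothesis survives each constant rescaling and (ii) the genuinely degenerate families---those in which a proper sub-sum of $f^n,g^n,h^n$ becomes identically constant---count as trivial in the sense used throughout the paper. The jet-differential input (Theorem \ref{thm2}) and the explicit general solution of the auxiliary second-order equation are the same routine ingredients already used for Theorems \ref{t1}--\ref{t3}.
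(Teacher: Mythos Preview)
Your proposal is correct and follows essentially the same route as the paper's proof: apply Theorem \ref{thm2} to the logarithmic $2$-jet differential $\eta$ of (\ref{eta}), use that $\mathfrak C(S)$ misses the log-poles because the $W$-coordinate is identically $1$, split into the cases $\mathfrak C^*u\equiv0$, $\mathfrak C^*v\equiv0$, $\mathfrak C^*w^{-1}\equiv0$ or the $2\times2$ determinant vanishes, solve the latter to get $g^n=af^n+bh^n$, and finish with Theorem \ref{t2}. Your write-up is in fact slightly more careful than the paper's in making the factorization $\mathfrak C^*\eta=fgh^{n-2}\left|\begin{smallmatrix}dF&dG\\ \mathscr D^2F&\mathscr D^2G\end{smallmatrix}\right|$ explicit, in tracking the growth condition through the reductions, and in flagging that the degenerate sub-sum case must be counted as trivial.
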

\begin{proof}  Let $(f,g,h)$ be a  holomorphic solution of (\ref{3term}).  Then, $H=1/h$ omits the value $0,$ this means that $\mathfrak C(S)$ avoids the log-poles of $\eta$ given by (\ref{eta}) for $n\geq 6.$ Indeed, $\eta$ vanishes along the ample divisor $(Z=0)\cap S_n$  for $n\geq 7.$ 
By Theorem \ref{thm2}, this  follows that $\mathfrak C^*\eta\equiv0$ due to the assumed  growth condition. 
Hence,  $\mathfrak C^*u\equiv0$ or $\mathfrak C^* v\equiv0$ or $\mathfrak C^*w^{-1}\equiv0;$ or else $\mathfrak C$ satisfies the differential equation 
$$\left |\begin{array}{cccc}
du & dv \\
\mathscr D^2 u & \mathscr D^2v  \\
\end{array}\right|=0.$$
The  first three cases are trivially handled. In the last case,  $\mathfrak C$ satisfies 
$$dud^2v-dvd^2u+(n-1)(d\log v-d\log u)dudv=0
$$which is solved generally by  
$v^n=au^n+b,$
where $a,b$ are  arbitrary constants. Whence, we obtain 
$
af^n+bh^n=g^n.
$
Substituting this equation into  (\ref{3term}), we get   $(1+a)f^n+(1+b)h^n=1.$  Clearly,  (\ref{3term}) exists  at most trivial holomorphic solution for $a=-1$ or $b=-1.$ 
If $a\not=-1$ and $b\not=-1,$ then there still exists  no non-trivial holomorphic solution of  (\ref{3term}) since Theorem \ref{t2}. The proof is completed.
\end{proof}

\section{Generalized Fermat functional equations}

We investigate  holomorphic and meromorphic solutions of the   generalized 2-term and 3-term Fermat functional equations
\begin{equation}\label{2term10} 
f^m+g^n=1;
\end{equation} 
\begin{equation}\label{3term10} 
f^m+g^n+h^l=1
\end{equation}
 over an  open Riemann surface $S.$ Treat  the Fermat curve $C_{m,n}$ and Fermat surface $S_{m,n,l}$ defined  by 
 $$ C_{m,n}: \ X^m+Y^nZ^{m-n}=Z^m, \ \ m\geq n;$$
 $$S_{m,n,l}: \ X^m+Y^nW^{m-n}+Z^lW^{m-l}=W^m, \ \ m\geq n\geq l $$
 in $\mathbb P^2(\mathbb C)$ and $\mathbb P^3(\mathbb C)$ respectively, 
their  affine forms are written  as
 $$ x^m+y^n=1; \ \ \  x^m+y^n+z^l=1$$
respectively. Then there are  holomorphic curves 
$$\mathfrak C_1=[f:g:1]: \ S\rightarrow C_{m,n}\subset \mathbb P^2(\mathbb C); $$
 $$\mathfrak C_2=[f:g:h:1]: \ S\rightarrow S_{m,n,l}\subset \mathbb P^3(\mathbb C).$$

\subsection{Non-existence of meromorphic solutions}~

Firstly, we assume that $C_{m,n}, S_{m,n,l}$ are normal,  with only  possible isolated singularities. It will be discussed in two cases as follows.

$(i)$  $C_{m,n}, S_{m,n,l}$\emph{ have no singularities}

Differentiating  $x^m+y^n=1$ to get 
$$mx^{m-1}dx+ny^{n-1}dy=0,$$ which gives rise to  a  1-jet differential 
$$\Phi_1:=\frac{dy}{mx^{m-1}}
=\frac{-dx}{ny^{n-1}}
=
\left |\begin{array}{cccc}
x & y \\
dx & dy  
\end{array}\right|
$$
on $C_{m,n}.$ Like before,  we can show that $\Phi_1$ is holomorphic when $1/m+1/n\leq 2/3$ and vanishing along  $(Z=0)\cap C_{m,n}$  when $1/m+1/n\leq 1/2.$   
This  because that the condition  $1/m+1/n\leq 2/3$ ensures that $m\geq 3$ or $n\geq 3,$ and the condition  $1/m+1/n\leq 1/2$  ensures that $m\geq 4$ or $n\geq4.$

Differentiating $x^m+y^n+z^l=1$ to get  
\begin{eqnarray*}
mx^{m-1}dx+ny^{n-1}dy+lz^{l-1}dz&=&0, \\
mx^{m-1}\mathscr D_x^2x+ny^{n-1}\mathscr D_y^2v+lx^{l-1}\mathscr D_z^2z&=&0,
\end{eqnarray*}
where 
$$\mathscr D_x^2x=d^2x+\frac{m-1}{x}dx^2, \ 
\mathscr D_y^2y=d^2y+\frac{n-1}{y}dy^2, \ 
\mathscr D_z^2z=d^2z+\frac{l-1}{z}dz^2.
$$
It gives a  2-jet differential 
$$
\Phi_2:=\frac{\left |\begin{array}{cccc}
dy & dz \\
\mathscr D_y^2 y & \mathscr D_z^2z  \\
\end{array}\right|}{mx^{m-1}}
=\frac{\left |\begin{array}{cccc}
dz & dx \\
\mathscr D_z^2 z & \mathscr D_x^2x  \\
\end{array}\right|}{ny^{n-1}}
=\frac{\left |\begin{array}{cccc}
dx & dy \\
\mathscr D_x^2 x & \mathscr D_y^2y  \\
\end{array}\right|}{lz^{l-1}}=
\left |\begin{array}{cccc}
x & y & z \\
dx & dy & dz \\
\mathscr D_x^2 x & \mathscr D_y^2 y & \mathscr D_z^2z 
\end{array}\right|
$$ 
on $S_{m,n,l}.$ Set $\omega=xyz\Phi_2,$ 
 which is holomorphic when $1/m+1/n+1/l\leq 3/8$ and 
 vanishing along  $(W=0)\cap S_{m,n,l}$  when $1/m+1/n+1/l \leq 1/3.$ 
It  because that the condition  $1/m+1/n+1/l\leq 3/8$ ensures that $m\geq 8$ or $n\geq 8$ or $l\geq 8,$ and the condition  $1/m+1/n+1/l \leq 1/3$ 
ensures that $m\geq 9$ or $n\geq9$ or $l\geq 9.$

$(ii)$ $C_{m,n}, S_{m,n,l}$ \emph{have only isolated singularities}

It's  very trivial to check that   $C_{m,n}$ has a unique  singularity $[0:1:0]$  lying in   the ample  divisor  $(Z=0)\cap C_{m,n},$ hence $\Phi_1$ is holomorphic on the affine 
part  $\{Z\not=0\}\cap C_{m,n}.$ With the similar arguments as in case $(i),$  $\Phi_1$  vanishes
 along  $(Z=0)\cap C_{m,n}$  for $1/m+1/n\leq 1/2.$   Now, one looks at $S_{m,n,l}.$  
Let $\mathfrak S$ be the set of  singularities of $S_{m,n,l},$ then $\omega$ is holomorphic on   the affine part $\{W\not=0\}\cap S_{m,n,l}$ outside $\mathfrak S.$
If a singularity $P\in \mathfrak S$ lies in $\{W \not=0\},$ one can check that $\omega$ is bounded near $P.$ 
Therefore, $\omega$ can extend across $P$ since the normality of $S_{m,n,l}.$
Let  $\pi: \tilde S_{m,n,l}\rightarrow S_{m,n,l}$  be the resolution of $\mathfrak S,$ 
then $\pi^*\omega$   
is  holomorphic  on  $\pi^*\{W\not=0\}\cap \tilde S_{m,n,l}.$ Similarly, along  $\pi^*(W=0)\cap \tilde S_{m,n,l},$ $\pi^*\omega$   is vanishing 
  when $1/m+1/n+1/l \leq 1/3.$ Morevover, one can lift $\mathfrak C_2$ to $\tilde{\mathfrak C}_2: S\rightarrow \tilde S_{m,n,l}.$
This turns to case $(i)$  when $1/m+1/n+1/l \leq 1/3.$

In what follows, we treat $S_{m,n,l}$  ($C_{m,n}$ can be  handled trivially) in a  general case.   
Note first that $S_{m,n,l}$ is a  Delsarte surface and which has degree $\geq9$  for $1/m+1/n+1/l \leq 1/3.$   
By Heijne \cite{hei}, 
$S_{m,n,l}$ belongs to one of  83 classes of Delsarte surfaces with only isolated ADE singularities
up to an isomorphism. Therefore, $S_{m,n,l}$ is a normal surface 
with only isolated ADE singularities for $1/m+1/n+1/l \leq 1/3.$ This  turns to  case $(ii)$ when $1/m+1/n+1/l \leq 1/3.$

According to the above discussions,   the similar arguments  as in the proofs of Theorem \ref{t1} and Theorem \ref{t3} follow immediately that 
\begin{theorem}\label{} For $1/m+1/n\leq 1/2,$ there exist no non-trivial meromorphic solutions of  $(\ref{2term10})$  satisfying the growth condition
$$\liminf_{r\rightarrow\infty}\frac{\kappa(r)r^2}{\mathfrak T_{f,g}(r)}=0,$$
where $\kappa$ is defined by $(\ref{kappa}).$  In particular,  there are no non-trivial meromorphic solutions  for  $1/m+1/n\leq 1/2$ if  $\tilde S=\mathbb C.$
\end{theorem}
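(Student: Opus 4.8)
The plan is to adapt the proof of Theorem \ref{t1}, replacing the Fermat curve $C_n$ by $C_{m,n}$ and the $1$-jet differential $\Phi$ by $\Phi_1$. Given a non-trivial meromorphic solution $(f,g)$ of $(\ref{2term10})$, labelled so that $m\geq n$ as in the definition of $C_{m,n}$, I would view it as the holomorphic curve $\mathfrak C_1=[f:g:1]\colon S\to C_{m,n}\subset\mathbb P^2(\mathbb C)$; non-triviality makes $\mathfrak C_1$ non-constant. Since $C_{m,n}$ may carry its unique singular point $[0:1:0]$, I would pass to the resolution $\pi\colon\tilde C_{m,n}\to C_{m,n}$, a compact Riemann surface, and lift $\mathfrak C_1$ to a holomorphic curve $\tilde{\mathfrak C}_1\colon S\to\tilde C_{m,n}$ with $\pi\circ\tilde{\mathfrak C}_1=\mathfrak C_1$ — this lift exists because $\mathfrak C_1(S)$, being more than a point, is not contained in the singular locus. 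By the discussion preceding the theorem, the hypothesis $1/m+1/n\leq 1/2$ guarantees that $\omega:=\pi^{*}\Phi_1$ is a holomorphic $1$-jet differential on $\tilde C_{m,n}$, with no log-poles, vanishing along the ample divisor $A:=\pi^{*}\bigl((Z=0)\cap C_{m,n}\bigr)$ (an effective divisor of positive degree on the compact curve).

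Next I would transfer the growth hypothesis to $\tilde{\mathfrak C}_1$ and $A$. One has $\mathfrak T_{f,g}(r)\leq T_{\mathfrak C_1,\omega_{FS}}(r)=T_{\tilde{\mathfrak C}_1,\pi^{*}\omega_{FS}}(r)$, and on the compact curve $\tilde C_{m,n}$ the characteristics with respect to $\pi^{*}\omega_{FS}$ and to $A$ agree up to a bounded factor and a bounded additive term; since $\mathfrak T_{f,g}(r)\to\infty$ for a non-trivial solution, the assumed condition $\liminf_{r\to\infty}\kappa(r)r^{2}/\mathfrak T_{f,g}(r)=0$ therefore passes to $\liminf_{r\to\infty}\kappa(r)r^{2}/T_{\tilde{\mathfrak C}_1,A}(r)=0$. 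Theorem \ref{thm2} then applies to $\tilde{\mathfrak C}_1$ and $\omega$ and gives $\tilde{\mathfrak C}_1^{*}\omega\equiv 0$, that is $\mathfrak C_1^{*}\Phi_1\equiv 0$. Reading off the last determinant in the definition of $\Phi_1$, this says
$$f\,dg-g\,df\equiv 0 \qquad \text{on } S.$$

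Finally I would extract the contradiction, exactly as in Theorem \ref{t1}. If $f\equiv 0$ then $(\ref{2term10})$ forces $g^{n}\equiv 1$, so $g$ takes only finitely many values and, $S$ being connected, is constant; hence $(f,g)$ is trivial. If $f\not\equiv 0$, then $f\,dg\equiv g\,df$ shows that $g/f$ is a meromorphic function on $S$ with vanishing differential, hence a constant $a$, so $g=af$; substituting into $(\ref{2term10})$ shows that $f$ satisfies the nonzero polynomial relation $w^{m}+a^{n}w^{n}-1=0$, so $f$ takes only the finitely many roots of this polynomial and is again constant, whence so is $g=af$. Either way $(f,g)$ is trivial, contradicting the hypothesis. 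The statement for $\tilde S=\mathbb C$ is the corresponding special case, since Theorem \ref{thm2} then needs no growth hypothesis.

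The step I expect to be the main point is the passage to the resolution $\tilde C_{m,n}$: one has to know that $\Phi_1$, visibly holomorphic on the smooth affine locus, extends to a holomorphic $1$-jet differential on $\tilde C_{m,n}$ vanishing along $A$, and that $\mathfrak C_1$ lifts through $\pi$. This is precisely what was arranged in the discussion preceding the theorem — $C_{m,n}$ has its only singularity on $(Z=0)$, $\Phi_1$ remains bounded there, and normality makes the extension automatic — so the rest is a routine transcription of the degree-$n$ Fermat case and involves no new ideas.
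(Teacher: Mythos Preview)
Your proof is correct and is exactly the argument the paper intends: the paper itself gives no separate proof but refers to ``the similar arguments as in the proof of Theorem~\ref{t1}'' combined with the preceding discussion (construction of $\Phi_1$, its holomorphy and vanishing along $(Z=0)\cap C_{m,n}$ for $1/m+1/n\le 1/2$, and the resolution step in the singular case), and you have written precisely this out, including the lift $\tilde{\mathfrak C}_1$ and the comparison of characteristics on the compact curve. One small caution worth recording: the determinant expression for $\Phi_1$ in the paper is only exact when $m=n$ --- in general $x\,dy-y\,dx=(mx^m+ny^n)\Phi_1$ on $C_{m,n}$ --- so while your conclusion $f\,dg-g\,df\equiv 0$ remains valid (it is a holomorphic multiple of $\mathfrak C_1^{*}\Phi_1$), reading $\mathfrak C_1^{*}\Phi_1\equiv 0$ directly through $\Phi_1=-dx/(ny^{n-1})$ already gives $df\equiv 0$, hence $f$ and then $g$ constant, without the detour through $g=af$.
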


\begin{theorem}\label{} For $1/m+1/n+1/l\leq 1/3,$ there exist no non-trivial meromorphic solutions of  $(\ref{3term10})$  satisfying the growth condition
$$\liminf_{r\rightarrow\infty}\frac{\kappa(r)r^2}{\mathfrak T_{f,g,h}(r)}=0,$$
where $\kappa$ is defined by $(\ref{kappa}).$  In particular,  there are no non-trivial meromorphic solutions  for  $1/m+1/n+1/l \leq 1/3$ if  $\tilde S=\mathbb C.$
\end{theorem}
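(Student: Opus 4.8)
The plan is to reduce the statement for $(\ref{3term10})$ over a general open Riemann surface $S$ to the model case over $\mathbb{C}$ that has effectively already been established through the machinery of Sections 2--4, and then to run the Fermat-surface jet-differential argument exactly as in the proof of Theorem \ref{t3}. First I would record the structural dichotomy for the Fermat surface $S_{m,n,l}$: by the Heijne classification of Delsarte surfaces, for $1/m+1/n+1/l\leq 1/3$ the surface $S_{m,n,l}$ has degree $\geq 9$ and is normal with only isolated ADE singularities. So after taking the minimal resolution $\pi:\tilde S_{m,n,l}\to S_{m,n,l}$ (which is crepant for ADE singularities, hence does not affect canonical-type positivity), the lifted $2$-jet differential $\pi^\ast\omega$ with $\omega=xyz\,\Phi_2$ is holomorphic on the resolved affine part and vanishes along the pullback of the ample divisor $(W=0)\cap S_{m,n,l}$ precisely when $1/m+1/n+1/l\leq 1/3$, by the pole-order computation already indicated in case $(i)$. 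The meromorphic solution $(f,g,h)$ lifts to a holomorphic curve $\tilde{\mathfrak C}_2: S\to \tilde S_{m,n,l}$ because $S$ is a Riemann surface and the image of $\mathfrak C_2$ is not contained in the (finite) singular locus — or, if it were, one is in a lower-dimensional situation handled by Theorem \ref{t1}.

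Next I would invoke Theorem \ref{thm2}: the growth hypothesis $\liminf_{r\to\infty}\kappa(r)r^2/\mathfrak T_{f,g,h}(r)=0$ together with $\mathfrak T_{f,g,h}(r)\leq T_{\mathfrak C_2,\omega_{FS}}(r)$ gives the required growth condition against the ample divisor, so $\tilde{\mathfrak C}_2^\ast\pi^\ast\omega\equiv 0$, i.e. $\mathfrak C_2^\ast\omega\equiv 0$. From the factored form $\omega=xyz\,\Phi_2$ and the Cramer's-rule identity for $\Phi_2$, this forces $\mathfrak C_2^\ast x\equiv 0$ or $\mathfrak C_2^\ast y\equiv 0$ or $\mathfrak C_2^\ast z\equiv 0$, or else $\mathfrak C_2$ satisfies the second-order ODE
$$
\left|\begin{array}{cc} dx & dy \\ \mathscr D_x^2 x & \mathscr D_y^2 y \end{array}\right|=0.
$$
In the first three cases the equation $(\ref{3term10})$ degenerates to a two-term equation $f^m+g^n=1$ (or a permutation), and Theorem \ref{t1} — more precisely its generalization, the previous Theorem for $(\ref{2term10})$ — kills the case as long as the relevant two of the three exponent conditions hold, which they do since $1/m+1/n+1/l\le 1/3$ forces each pair to satisfy $\le 1/2$.

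For the remaining ODE case I would integrate: as in Theorem \ref{t3}, the equation $dxd^2y-dyd^2x+\text{(log terms)}=0$ with the weighted operators $\mathscr D^2_x,\mathscr D^2_y$ integrates to $y^n=ax^m+b$ for constants $a,b$ — here one must be slightly careful, since with distinct exponents $m\neq n$ the first integral reads $y^n = a x^m + b$ after the substitution $\xi=x^m$, $\eta=y^n$ linearizes the relation $d\eta/d\xi=\text{const}$. Pulling back, $g^n=af^m+b$, and substituting into $(\ref{3term10})$ yields $(1+a)f^m - b + h^l = 1$ ... more carefully $f^m+h^l=1-b+af^m$, i.e. a relation of the form $(1-a)f^m+h^l=1-b$; rescaling, this is again a two-term Fermat-type equation in $f,h$ with exponents $m,l$ (the degenerate subcases $a=1$ or $b=1$ leaving only trivial solutions), so the two-term theorem applies since $1/m+1/l\le 1/2$. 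Collecting the cases gives non-existence of non-trivial meromorphic solutions. The main obstacle I expect is the bookkeeping in case $(ii)$ of the singularity analysis — verifying that $\omega$ genuinely extends holomorphically (not just meromorphically) across each isolated ADE singularity and that the vanishing along the ample divisor survives the resolution, since the precise pole/zero orders of the $2$-jet differential near the singular points depend on the local normal-crossing model of $S_{m,n,l}$; but this is exactly the computation sketched in the excerpt preceding the statement, so it reduces to a finite check over Heijne's 83 classes, or better, to the uniform bound $8-n$-type pole estimate that is insensitive to the resolution for ADE (crepant) singularities.
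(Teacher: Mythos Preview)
Your proposal is correct and follows essentially the same approach as the paper: the paper itself does not write out a separate proof for this theorem but simply says that, given the singularity discussion (normality of $S_{m,n,l}$ via Heijne's Delsarte classification, resolution $\pi$, holomorphic extension of $\pi^*\omega$, vanishing along $\pi^*(W=0)$), ``the similar arguments as in the proofs of Theorem~\ref{t1} and Theorem~\ref{t3} follow immediately,'' which is exactly the program you carry out. Two small corrections: in the substitution step you should get $(1+a)f^m+h^l=1-b$ rather than $(1-a)f^m+h^l=1-b$; and your justification that each pairwise sum satisfies $1/m+1/n\leq 1/2$ is immediate from $1/m+1/n\leq 1/m+1/n+1/l\leq 1/3<1/2$, so the two--term theorem applies.
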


\subsection{Non-existence of holomorphic solutions}~

Let $\psi$ be a non-constant meromorphic function on $S.$ Following Dong \cite{dong}, we define the Nevanlinna's functions  of $\psi$ over $S$ by 
 \begin{eqnarray*}
  N(r,\psi)
&=& \pi \sum_{\psi*\infty\cap D_o(r)}g_r(o,x),  \\
m(r,\psi)&=& \int_{\partial D_o(r)} \log^+|\psi(x)|d\pi_r^o(x), \\
 T(r,\psi)
   &=& \frac{1}{4}\int_{D_o(r)}g_r(o,x)\Delta_S\log(1+|\psi(x)|^2)dV(x).
 \end{eqnarray*}
For $a\in \mathbb P^1(\mathbb C),$  we have
$$\text{F.  M. T.}  \ \ \  T(r, \psi)=m\big(r,1/(\psi-a)\big)+N\big(r,1/(\psi-a)\big)+O(1).$$
Similarly, one can define the $k$-truncated counting function $N^{[k]}(r,\psi)$ in such manner: if $x_0$ is a pole  of $\psi$ with multiplicity $\mu$ in $D_o(r),$
then one   just 
 takes $x_0$ $\min\{\mu, k\}$
times, namely, one only keeps the part $\pi\min\{\mu, k\}g_r(o,x_0)$  for $x_0$ in the expression of $N(r,\psi).$ 
The $k$-level defect of $\psi$ with respect to $a$ is defined by
$$\delta^{[k]}(\psi, a)=1-\limsup_{r\rightarrow\infty}\frac{N^{[k]}\big(r,1/(\psi-a)\big)}{T(r,\psi)}.$$
In short,  write  $\delta(\psi, a):=\delta^{[\infty]}(\psi, a).$ Clearly, we have
$$0\leq \delta(\psi, a) \leq \delta^{[k]}(\psi, a)\leq1.$$
\ \ \ \ Since $S$ is open, then there exists  a nowhere-vanishing holomorphic  vector field $\mathfrak X$ over $S,$ see \cite{gr}. 
Let $\psi_0, \cdots, \psi_n$ $(n\geq 1)$ be non-constant holomorphic functions on $S,$ 
   define the Wronskian determinant of $\psi_0,\cdots,\psi_n$ with respect to $\mathfrak{X}$   by
$$W_\mathfrak{X}(\psi_0,\cdots,\psi_n)=\left|
  \begin{array}{ccc}
   \psi_0 & \cdots & \psi_n \\
   \mathfrak{X}(\psi_0) & \cdots & \mathfrak{X}(\psi_n) \\
    \vdots & \vdots & \vdots \\
     \mathfrak{X}^n(\psi_0) & \cdots & \mathfrak{X}^n(\psi_n) \\
  \end{array}
\right|.$$

We introduce a Logarithmic Derivative Lemma as follows
\begin{lemma}[\cite{dong}]\label{ldl} Let $\psi$ be a non-constant meromorphic function on $S.$ For  a positive integer $k,$ we have 
\begin{eqnarray*}
% \nonumber to remove numbering (before each equatio
m\Big(r,\frac{\mathfrak{X}^k(\psi)}{\psi}\Big)  &\leq_{\rm{exc}}& \frac{3k}{2}\log T(r,\psi)+O\Big(\log^+\log T(r,\psi)-\kappa(r)r^2
+\log^+\log r\Big)
\end{eqnarray*}
with $\mathfrak X^j=\mathfrak X\circ\mathfrak X^{j-1}$ and $\mathfrak X^0=id,$   where
$\kappa$ is defined by $(\ref{kappa}).$
\end{lemma}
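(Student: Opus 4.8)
The plan is to prove the estimate first for $k=1$ and then bootstrap to general $k$ by a telescoping argument. Writing $\psi_j:=\mathfrak{X}^{j-1}(\psi)$ one has
$$\frac{\mathfrak{X}^k(\psi)}{\psi}=\prod_{j=1}^{k}\frac{\mathfrak{X}(\psi_j)}{\psi_j},$$
so subadditivity of the proximity function reduces matters to bounding each $m\big(r,\mathfrak{X}(\psi_j)/\psi_j\big)$. To re-express the result in terms of $T(r,\psi)$ alone, one checks inductively — using $\mathfrak{X}^{j}(\psi)=\big(\mathfrak{X}(\psi_j)/\psi_j\big)\psi_j$, the First Main Theorem (with the trivial bound $N(r,\mathfrak{X}(\psi_j)/\psi_j)\le 2T(r,\psi_j)+O(1)$ on the pole part), and the $k=1$ estimate itself — that $T\big(r,\mathfrak{X}^{j}(\psi)\big)=O\big(T(r,\psi)\big)$ modulo the same type of error, hence $\log T\big(r,\mathfrak{X}^{j}(\psi)\big)\le \log T(r,\psi)+O\big(\log^+\log T(r,\psi)-\kappa(r)r^2+\log^+\log r\big)$. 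Summing $k$ copies of the $k=1$ bound then produces both the coefficient $3k/2$ and the $\log^+\log T$ term, and the union of the finitely many exceptional sets is still of finite measure.

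For $k=1$ the core is a Green--Jensen identity on the geodesic disc: for $u$ with at worst logarithmic singularities,
$$\int_{\partial D_o(r)}u\,d\pi_r^o=u(o)+\frac{1}{2}\int_{D_o(r)}g_r(o,x)\Delta_S u(x)\,dV(x)+(\text{counting-function terms}),$$
where $d\pi_r^o$ is the harmonic measure, a probability measure. Since $\log^+ t\le \tfrac12\log(1+t^2)$, concavity of $\log$ against $d\pi_r^o$ gives
$$m\Big(r,\frac{\mathfrak{X}(\psi)}{\psi}\Big)\le \frac{1}{2}\log^+\!\Big(\int_{\partial D_o(r)}\Big|\frac{\mathfrak{X}(\psi)}{\psi}\Big|^2 d\pi_r^o\Big)+O(1).$$
Factoring $\big|\mathfrak{X}(\psi)/\psi\big|=\big(|\mathfrak{X}(\psi)|/(1+|\psi|^2)\big)\cdot\big((1+|\psi|^2)/|\psi|\big)$ separates a spherical-derivative factor from $|\psi|+|\psi|^{-1}$; the contribution of the latter is absorbed by $m(r,\psi)+m(r,1/\psi)\le 2T(r,\psi)+O(1)$ from the First Main Theorem, while the boundary integral of the former is, by the co-area formula, controlled by the radial derivative of the area function $A(r):=\int_{D_o(r)}\psi^*\omega_{FS}$, which in turn is comparable to $T(r,\psi)$ through the Green function.

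The main obstacle, and the place where the geometry of $S$ intervenes, is converting this radial-derivative control into a polynomial bound in $T(r,\psi)$ and $r$. On $\mathbb{C}$ one uses the kernel $\log(r/|z|)$ and the elementary $\int_0^r \tfrac{dt}{t}$; here $\log(r/|z|)$ is replaced by the Green function $g_r(o,x)$ of the curved surface and $\tfrac{dt}{t}$ by the radial variation of the area and of the boundary length of $D_o(r)$, whose growth is bounded, via Gauss--Bonnet, in terms of $\kappa$ — this is exactly where $-\kappa(r)r^2$ is generated. One therefore needs a calculus (Borel-type) lemma: a positive increasing $C^1$ function $\Theta$ satisfies $\Theta'(r)\le \Theta(r)^{1+\varepsilon}$ off a set of finite Lebesgue measure; applied twice — once to the geometric area/length functions, once to $T(r,\psi)$ — with the errors tracked so that they total $O\big(\log^+\log T(r,\psi)-\kappa(r)r^2+\log^+\log r\big)$ and the exceptional set stays of finite measure. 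Combining this with the concavity step yields the $k=1$ estimate, and the telescoping step then gives the general case.
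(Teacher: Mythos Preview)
The paper does not supply a proof of this lemma: it is simply quoted from \cite{dong} (the first author's manuscript on Cartan's theory for Riemann surfaces), with no argument given here. So there is no in-paper proof to compare your proposal against.

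That said, your outline follows the standard architecture one expects for a logarithmic derivative lemma on a curved base: telescoping to reduce to $k=1$, concavity of $\log$ against the harmonic measure, a Green--Jensen/Dynkin identity on $D_o(r)$, the Ahlfors--Shimizu description of $T(r,\psi)$ via the spherical area $A(r)$, and a double application of a Borel-type calculus lemma. The identification of the curvature contribution $-\kappa(r)r^2$ with the Gauss--Bonnet control on the growth of geodesic-ball volume and boundary length is exactly the mechanism used in \cite{dong}. Two places where your sketch is thin and would need to be fleshed out to stand as a proof: (i) the claim that the boundary integral of the spherical-derivative factor is governed by the radial derivative of $A(r)$ requires, on a non-flat $S$, a careful comparison between harmonic measure $d\pi_r^o$ and arc-length on $\partial D_o(r)$ (this is where the Green-function asymptotics and the curvature bound are actually invoked, not merely the co-area formula); and (ii) the inductive step asserting $\log T\big(r,\mathfrak{X}^{j}(\psi)\big)\le \log T(r,\psi)+O(\cdots)$ needs the observation that the implicit constants in the $O$-term depend only on $j\le k$, so that after summation they are absorbed into the stated error. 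Neither is a genuine obstacle, but both are where the real work lies.
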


\begin{lemma}\label{lem5}  Let $\psi_0, \cdots, \psi_n$ $(n\geq 1)$ be non-constant holomorphic functions on $S$ satisfying 
$$a_0\psi_0+\cdots+a_n\psi_n=0.$$
 If 
$$\liminf_{r\rightarrow\infty}\frac{\kappa(r)r^2}{\min\{T(r, \psi_0),\cdots,T(r,\psi_n)\}}=0,$$
where $\kappa$ is defined by $(\ref{kappa}),$ then
$$\sum_{j=0}^n\delta^{[n]}(\psi_j, 0)\leq n.$$
\end{lemma}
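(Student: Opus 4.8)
The statement is a Second Main Theorem type defect relation, and my plan is to reprove it along Cartan's lines, transplanted to the open Riemann surface $S$: the Logarithmic Derivative Lemma (Lemma~\ref{ldl}) plays the role of the classical lemma on logarithmic derivatives, the First Main Theorem supplies the identities among $m$, $N$ and $T$, and the curvature term $-\kappa(r)r^2$ produced by Lemma~\ref{ldl} is absorbed using exactly the hypothesis $\liminf_r\kappa(r)r^2/\min_jT(r,\psi_j)=0$. Geometrically the point is that $a_0\psi_0+\cdots+a_n\psi_n=0$ confines the holomorphic curve $\mathfrak{C}=[\psi_0:\cdots:\psi_n]:S\to\mathbb P^n$ to a hyperplane $\cong\mathbb P^{n-1}$, on which the $n+1$ coordinate hyperplanes $\{w_j=0\}$ are in general position, and the asserted bound is the defect relation for those $n+1$ hyperplanes.

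First I would strip off the degenerate configurations by induction on $n$. If some $a_j=0$, the relation uses only $n$ of the $\psi_i$; applying the lemma to that subfamily (the growth hypothesis passes to it, since $\min_{i\ne j}T(r,\psi_i)\ge\min_iT(r,\psi_i)$) gives $\sum_{i\ne j}\delta^{[n-1]}(\psi_i,0)\le n-1$, and then $\delta^{[n]}\le\delta^{[n-1]}$ together with $\delta^{[n]}(\psi_j,0)\le1$ finishes it. If $\psi_0,\dots,\psi_n$ satisfy a second independent linear relation, eliminating one function yields a relation with a vanishing coefficient, which is the previous case; and a $\psi_j$ with $T(r,\psi_j)=O(\log r)$ has $\delta(\psi_j,0)=0$ and may be dropped. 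So I may assume all $a_j\ne0$, that any $n$ of $\psi_0,\dots,\psi_n$ are $\mathbb C$-linearly independent, and that each $T(r,\psi_j)$ grows faster than every power of $\log r$.

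Now fix the nowhere-vanishing holomorphic vector field $\mathfrak{X}$ on $S$, relabel so that $\psi_0,\dots,\psi_{n-1}$ are independent, and set $W:=W_{\mathfrak{X}}(\psi_0,\dots,\psi_{n-1})\not\equiv0$. Two facts do the work. First, $W/(\psi_0\cdots\psi_{n-1})$ is a polynomial in the logarithmic derivatives $\mathfrak{X}^i(\psi_\ell)/\psi_\ell$ ($1\le i\le n-1$, $0\le\ell\le n-1$), so Lemma~\ref{ldl} gives $m\big(r,W/(\psi_0\cdots\psi_{n-1})\big)=E(r)$ outside a set of finite measure, where throughout $E(r)=O\big(\log^+\max_\ell T(r,\psi_\ell)-\kappa(r)r^2+\log^+\log r\big)$; differentiating $\sum_ja_j\psi_j=0$ and using Cramer's rule, the $n+1$ order-$(n-1)$ Wronskians of $n$ of the $\psi_j$ are mutually constant multiples, so the same estimate holds for each of them. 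Second, a zero of $\psi_j$ of order $\mu$ forces the Wronskian that omits $\psi_j$ to vanish there to order at least $\mu-n$, whence $\sum_{j=0}^n\big(N(r,1/\psi_j)-N^{[n]}(r,1/\psi_j)\big)\le N(r,1/W)+O(1)$.

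It remains to assemble these in Cartan's way. Since at most $n-1$ of the $n+1$ hyperplanes $\{w_j=0\}$ can be simultaneously small on $\mathbb P^{n-1}$, the quantity $\sum_j\log^+(1/|w_j|)$ is controlled by a ratio of a product of $n$ of the coordinates to the relevant Wronskian; combining this with the first fact above and the First Main Theorem yields a Second Main Theorem inequality $T(r,\mathfrak{C})\le\sum_{j=0}^nN^{[n]}(r,1/\psi_j)+E(r)$. Dividing by $T(r,\mathfrak{C})$, using $T(r,\psi_j)\le T(r,\mathfrak{C})+O(1)$, and evaluating along a sequence $r_m\to\infty$ realizing $\liminf_r\kappa(r)r^2/\min_jT(r,\psi_j)=0$ --- along which $E(r_m)=o(T(r_m,\mathfrak{C}))$, precisely because the curvature term is then negligible against $\min_jT(r,\psi_j)$ --- one gets $1\le\sum_j\limsup_mN^{[n]}(r_m,1/\psi_j)/T(r_m,\psi_j)\le\sum_j\big(1-\delta^{[n]}(\psi_j,0)\big)$, that is, $\sum_{j=0}^n\delta^{[n]}(\psi_j,0)\le n$. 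I expect the main obstacle to be the precise bookkeeping in the last two steps: extracting exactly the truncation level $n$ from the Wronskian-vanishing estimate, and keeping careful track of the common factor $\gcd(\psi_0,\dots,\psi_n)$ --- the $\psi_j$ are not assumed coprime and $1$ is not among them --- so that the comparison between $T(r,\mathfrak{C})$ and the individual $T(r,\psi_j)$ costs nothing of size comparable to $T(r,\mathfrak{C})$.
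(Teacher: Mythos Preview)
Your plan is correct and is essentially the paper's own argument: a Cartan-type Second Main inequality obtained from Wronskians together with the Logarithmic Derivative Lemma (Lemma~\ref{ldl}), the curvature error $-\kappa(r)r^2$ absorbed via the growth hypothesis, and the degenerate case handled by passing to a maximal linearly independent subfamily and using $\delta^{[n]}\le\delta^{[k]}\le1$.

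The one packaging difference worth noting is that the paper never introduces the projective characteristic $T(r,\mathfrak C)$. Instead, from $\sum_j\psi_j=0$ and its $\mathfrak X$-derivatives it writes, via Cramer's rule, $\psi_j=\Delta_j/\Delta$ with $\Delta=W_{\mathfrak X}(\psi_0,\dots,\psi_n)/(\psi_0\cdots\psi_n)$ and $\Delta_j$ the corresponding logarithmic Wronskian minor, and bounds $m(r,\psi_j)\le m(r,\Delta_j)+m(r,1/\Delta)\le N(r,\Delta)+S(r)\le\sum_kN^{[n]}(r,1/\psi_k)+S(r)$ directly. Since the $\psi_j$ are holomorphic this already gives $\max_jT(r,\psi_j)$ on the left, and the defect relation follows after dividing by this maximum. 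This direct route sidesteps exactly the obstacle you anticipate---the comparison between $T(r,\mathfrak C)$ and the individual $T(r,\psi_j)$ in the presence of common zeros---so you may find it cleaner to bound $m(r,\psi_j)$ straight away rather than to pass through $T(r,\mathfrak C)$.
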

\begin{proof}  We  prove the lemma by considering  two cases.

$a)$ $\psi_0,\cdots,\psi_n$ are linearly independent over $\mathbb C$

 Differentiating
$\psi_0+\cdots+\psi_n=0$ to get 
$$\sum_{j=0}^n\frac{\mathfrak X^\mu\psi_j}{\psi_j}\psi_j=0, \ \ \mu=1,\cdots, n.$$
It yields from Crammer's rule  that  
$\psi_j=\Delta_j/\Delta,$
where 
$$\Delta=\frac{W_\mathfrak{X}(\psi_0,\cdots,\psi_n)}{\psi_0\cdots\psi_n}, \ \ \ 
\Delta_j=\frac{\psi_jW_\mathfrak{X}(\psi_0,\cdots,\psi_{j-1},1,\psi_{j+1},\cdots, \psi_n)}{\psi_0\cdots\psi_n}.$$
The First Main Theorem and Lemma \ref{ldl} imply that 
 \begin{eqnarray*}\label{}
m(r,\psi_j)&\leq& m(r,\Delta_j)+m(r,1/\Delta)+O(1) \\
&\leq& m(r,\Delta_j)+m(r,\Delta)+N(r,\Delta)+O(1) \\
&\leq& N(r,\Delta)+S(r) \\
&\leq& \sum_{j=0}^nN^{[n]}(r,1/\psi_j)+S(r),
\end{eqnarray*}
where 
$$S(r)=O\Big(\sum_{j=0}^n\log T(r,\psi_j)-\kappa(r)r^2\Big).$$
Therefore, 
$$T(r):=\max\big\{T(r,\psi_0),\cdots, T(r,\psi_n)\big\} 
\leq \sum_{j=0}^nN^{[n]}(r,1/\psi_j)+S(r).
$$
For an arbitrary $\epsilon>0,$
$$N^{[n]}(r,1/\psi_j)\leq\big(1-\delta^{[n]}(\psi_j, 0)+\epsilon\big)T(r,\psi_j)+S(r)$$
holds for $r$  large enough.  Thus, it follows  that 
$$T(r)\leq \sum_{j=0}^n\big(1-\delta^{[n]}(\psi_j, 0)+\epsilon\big)T(r)+S(r).$$
This implies that 
$\delta^{[n]}(\psi_0, 0)+\cdots+\delta^{[n]}(\psi_n, 0)\leq n.$

$b)$  $\psi_0,\cdots,\psi_n$ are linearly dependent over $\mathbb C$

Rewrite $\psi_0+\cdots+\psi_n$ as 
$a_0\psi_{n_0}+\cdots+a_k\psi_{n_k}$
such that $\psi_{n_0}, \cdots, \psi_{n_k}$ are  linearly independent over $\mathbb C.$
By $a),$ it yields that   
$$T_1(r)\leq \sum_{j=0}^k\big(1-\delta^{[k]}(\psi_{n_j}, 0)+\epsilon\big)T_1(r)+S(r),$$
where 
$$T_1(r)=\max\big\{T(r,\psi_{n_0}),\cdots, T(r,\psi_{n_k})\big\}.$$
Then
$$\sum_{j=0}^k\delta^{[k]}(\psi_{n_j}, 0)\leq k.$$
Notice that $\delta^{[n]}(\psi_{j}, 0)\leq \delta^{[k]}(\psi_{j}, 0)\leq1,$   we  confirm the lemma.
\end{proof}

\begin{theorem}\label{} For $1/m+1/n<1,$ there exist no non-trivial holomorphic solutions of  $(\ref{2term10})$  satisfying the growth condition
$$\liminf_{r\rightarrow\infty}\frac{\kappa(r)r^2}{\min\{\mathfrak T_{f}(r), \mathfrak T_{g}(r)\}}=0,$$
where $\kappa$ is defined by $(\ref{kappa}).$  In particular,  there are no non-trivial holomorphic solutions  for  $1/m+1/n< 1$ if  $\tilde S=\mathbb C.$
\end{theorem}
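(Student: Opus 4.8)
The plan is to deduce the non-existence of holomorphic solutions of $f^m+g^n=1$ (with $1/m+1/n<1$) from Lemma~\ref{lem5} applied to a suitable decomposition of the equation into monomials in $f$ and $g$. First I would consider the abelian group $G$ generated by $f$ and $g$ (equivalently the multiplicative semigroup of monomials $f^a g^b$ appearing in a putative parametrization), but more directly I would argue as follows. Set $d=[m,n]$ the lowest common multiple, write $d=pm=qn$, and put $\psi_1=f^m$ and $\psi_2=g^n$; these are non-constant holomorphic functions with $\psi_1+\psi_2=1$. The relation $\psi_1+\psi_2-1=0$ is a $\mathbb{C}$-linear relation among the three non-constant holomorphic functions $\psi_1,\psi_2$ and the constant $1$, but the constant function $1$ is excluded from the hypotheses of Lemma~\ref{lem5} (it must be applied to non-constant functions). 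So instead I would differentiate once to kill the constant: applying $\mathfrak{X}$ to $f^m+g^n=1$ gives $m f^{m-1}\mathfrak{X}(f)+n g^{n-1}\mathfrak{X}(g)=0$, i.e. a two-term relation $m f^{m-1}\mathfrak{X}(f)=-n g^{n-1}\mathfrak{X}(g)$ between holomorphic functions.

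The key step is then to feed the three functions $\psi_0=1$, $\psi_1=f^m$, $\psi_2=g^n$ — or rather their derivatives — into Lemma~\ref{lem5} with $n=2$ so as to get $\sum_{j=0}^2\delta^{[2]}(\psi_j,0)\le 2$. The point is that $\psi_1=f^m$ has a zero of multiplicity $\ge m$ at every zero of $f$, so $N^{[2]}(r,1/\psi_1)\le (2/m)N(r,1/f)+(\text{bounded error})\le (2/m)T(r,f)+O(1)=(2/m)\mathfrak{T}_f(r)+O(1)$ roughly, hence $\delta^{[2]}(f^m,0)\ge 1-2/m$; similarly $\delta^{[2]}(g^n,0)\ge 1-2/n$. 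Meanwhile $\psi_2=1$ never vanishes, so $\delta^{[2]}(1,0)=1$. Summing, Lemma~\ref{lem5} would force $(1-2/m)+(1-2/n)+1\le 2$, i.e. $1/m+1/n\ge 1$, contradicting the hypothesis $1/m+1/n<1$ once $m,n\ge 2$ (so that the comparisons $\mathfrak T_{f^m}(r)\asymp m\,\mathfrak T_f(r)$ and the growth-condition transfer are valid). To make this run I must check the growth condition transfers: the hypothesis $\liminf \kappa(r)r^2/\min\{\mathfrak T_f(r),\mathfrak T_g(r)\}=0$ together with $\mathfrak T_{f^m}(r)=m\,\mathfrak T_f(r)+O(1)$ (and the analogous statement for $g$, and $\mathfrak T_1(r)\equiv 0$, which needs separate care since the minimum over all three would be $0$) gives what Lemma~\ref{lem5} needs.

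The main obstacle is precisely the presence of the constant term $1$: Lemma~\ref{lem5} is stated for \emph{non-constant} holomorphic functions, and a constant summand has characteristic $\equiv 0$, so neither the growth hypothesis $\liminf\kappa(r)r^2/\min_j T(r,\psi_j)=0$ nor the conclusion in terms of $\delta^{[n]}$ makes literal sense for it. I would resolve this the way the $3$-term meromorphic case was handled: differentiate to eliminate the constant, obtaining the genuinely two-term relation $m f^{m-1}\mathfrak{X}(f)+n g^{n-1}\mathfrak{X}(g)=0$, and then apply the First Main Theorem and Lemma~\ref{ldl} directly — essentially re-running the proof of Lemma~\ref{lem5} in the two-function case with $\psi_0=f^m$, $\psi_1=g^n$ and using that $\mathfrak{X}(f^m)/f^m = m\,\mathfrak{X}(f)/f$ has small proximity function. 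The estimate $m(r,\mathfrak{X}(\psi_j)/\psi_j)\le_{\mathrm{exc}} S(r)$ from the Logarithmic Derivative Lemma, combined with $N^{[1]}(r,1/f^m)\le (1/m)N(r,1/f^m)\le T(r,f)+O(1)$ coming from the high-order vanishing of $f^m$, yields $m(r,f^m)\le m(r,g^n)/\text{(something)}+\cdots$; bookkeeping the truncation levels carefully gives $\mathfrak T_{f^m}(r)+\mathfrak T_{g^n}(r)\le \mathfrak T_f(r)+\mathfrak T_g(r)+S(r)$, i.e. $(m-1)\mathfrak T_f(r)+(n-1)\mathfrak T_g(r)\le S(r)$, which is incompatible with $m,n$ large (precisely $1/m+1/n<1$) unless $f,g$ are constant. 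This reduces everything to routine Nevanlinna estimates once the constant is differentiated away, so no genuinely new idea beyond Lemma~\ref{lem5} and Lemma~\ref{ldl} is needed.
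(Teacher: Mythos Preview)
Your three-function application of Lemma~\ref{lem5} contains an arithmetic slip that is fatal: from $(1-2/m)+(1-2/n)+1\le 2$ one gets $2/m+2/n\ge 1$, i.e.\ $1/m+1/n\ge 1/2$, \emph{not} $\ge 1$. Since the theorem assumes only $1/m+1/n<1$ (so e.g.\ $m=n=3$ is allowed), the bound $\ge 1/2$ gives no contradiction. The cost is precisely that introducing the constant $1$ as a third ``function'' raises the truncation level from $[1]$ to $[2]$ and halves the strength of each defect estimate.

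The paper instead applies Lemma~\ref{lem5} with only \emph{two} functions, $\psi_0=f^m$ and $\psi_1=g^n$, and the relation $\psi_0+\psi_1=1$. (The ``$=0$'' in the lemma's hypothesis is a slip; inspect its proof, where Cramer's rule yields $\psi_j=\Delta_j/\Delta$ with a column equal to the constant $1$ --- this is exactly the case $\sum\psi_j=1$, and if the sum were literally $0$ the Wronskian $W_\mathfrak{X}(\psi_0,\dots,\psi_n)$ would vanish, contradicting the linear-independence case.) With $n=1$ in the lemma one obtains $\delta^{[1]}(f^m,0)+\delta^{[1]}(g^n,0)\le 1$; since $N^{[1]}(r,1/f^m)=N^{[1]}(r,1/f)\le (1/m)T(r,f^m)+O(1)$ one has $\delta^{[1]}(f^m,0)\ge 1-1/m$, and similarly for $g^n$, whence $1/m+1/n\ge 1$. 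Your fallback of differentiating once and running Lemma~\ref{ldl} by hand is just a re-derivation of this two-function case of Lemma~\ref{lem5}; it does work, but the inequality it actually produces is $T(r)\le (1/m+1/n)T(r)+S(r)$ with $T(r)=\max\{T(r,f^m),T(r,g^n)\}$, not the stronger (and unneeded) $(m-1)\mathfrak T_f(r)+(n-1)\mathfrak T_g(r)\le S(r)$ that you wrote.
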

\begin{proof} By the definition, we have 
$$\mathfrak T_{f}(r)=T(r,f), \ \ \mathfrak T_{g}(r)=T(r,g).$$
So, it yields from Lemma \ref{lem5} that 
$$\delta^{[1]}(f^m, 0)+\delta^{[1]}(g^n, 0)\leq 1.$$
Since 
$$\limsup_{r\rightarrow\infty}\frac{N^{[1]}(r,1/f^m)}{T(r, f^m)}=\limsup_{r\rightarrow\infty}\frac{N^{[1]}(r,1/f)}{mT(r, f)}\leq\frac{1}{m},$$
then $$\delta^{[1]}(f^m,0)\geq1-\frac{1}{m}.$$
Similarly, 
 $$\delta^{[1]}(g^n,0)\geq1-\frac{1}{n}.$$
Combine the above, it follows that 
$$\frac{1}{m}+\frac{1}{n}\geq1.$$
The proof is completed.
\end{proof}

\begin{theorem}\label{} For $1/m+1/n+1/l< 1/2,$ there exist no non-trivial holomorphic solutions of  $(\ref{3term10})$  satisfying the growth condition
$$\liminf_{r\rightarrow\infty}\frac{\kappa(r)r^2}{\min\{\mathfrak T_{f}(r), \mathfrak T_{g}(r), \mathfrak T_{h}(r)\}}=0,$$
where $\kappa$ is defined by $(\ref{kappa}).$  In particular,  there are no non-trivial holomorphic solutions  for  $1/m+1/n+1/l <1/2$ if  $\tilde S=\mathbb C.$
\end{theorem}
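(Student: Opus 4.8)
The plan is to mimic the proof of the preceding theorem (the $1/m+1/n<1$ case for $(\ref{2term10})$), replacing the two-term linear relation by a three-term one and invoking Lemma \ref{lem5} with $n=2$. Writing the equation $(\ref{3term10})$ as $f^m+g^n+h^l-1=0$ is not yet in the homogeneous form required by Lemma \ref{lem5}, so the first step is to homogenize: set $\psi_0=f^m$, $\psi_1=g^n$, $\psi_2=h^l$ and $\psi_3=-1$. The constant function $\psi_3$ is not "non-constant," so instead I would argue as in the proof of Theorem \ref{t4}: group the constant $-1$ together with one of the powers only if forced, but more cleanly, observe that since $(f,g,h)$ is a non-trivial holomorphic solution, none of $f,g,h$ can be constant without reducing to the two-term case already handled by the previous theorem (if, say, $h$ is constant then $f^m+g^n=1-h^l$ is a two-term equation with $1/m+1/n<1/2<1$, contradicting that theorem unless the solution is trivial). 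So I may assume $f,g,h$ are all non-constant, and then differentiate $f^m+g^n+h^l=1$ once with respect to the vector field $\mathfrak X$ to kill the constant, obtaining
$$mf^{m-1}\mathfrak X(f)+ng^{n-1}\mathfrak X(g)+lh^{l-1}\mathfrak X(h)=0,$$
i.e. a three-term homogeneous linear relation $\tilde\psi_0+\tilde\psi_1+\tilde\psi_2=0$ among the non-constant holomorphic functions $\tilde\psi_0=f^{m-1}\mathfrak X(f)$, etc. (up to absorbing constants).

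Next I would relate the characteristics: since $T(r,f^{m-1}\mathfrak X(f))$ differs from $mT(r,f)=\mathfrak T_f(r)\cdot m$ only by a Logarithmic-Derivative-Lemma error term (Lemma \ref{ldl}), the growth hypothesis $\liminf \kappa(r)r^2/\min\{\mathfrak T_f,\mathfrak T_g,\mathfrak T_h\}=0$ transfers to the $\tilde\psi_j$, so Lemma \ref{lem5} with $n=2$ applies and yields $\sum_{j=0}^2\delta^{[2]}(\tilde\psi_j,0)\le 2$. The key point is then to bound each truncated defect from below: the zeros of $\tilde\psi_0=f^{m-1}\mathfrak X(f)$ are at zeros of $f$ (of multiplicity $\ge m-1$, truncated at $2$ contributing comparatively little) and at zeros of $\mathfrak X(f)$ (critical points of $f$, which contribute to $N^{[2]}$ but are controlled by the Logarithmic Derivative Lemma, being essentially $N(r,\mathfrak X(f)/f)$-type terms of order $S(r)$). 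Counting the $f$-zeros with the truncation-$2$ convention against $T(r,\tilde\psi_0)\approx mT(r,f)$ gives $\delta^{[2]}(\tilde\psi_0,0)\ge 1-\frac{2}{m}$ — actually I should be more careful and use truncation level matching $n=2$, so $N^{[2]}(r,1/\tilde\psi_0)\le 2N(r,1/f)+\{\text{critical point terms}\}=S(r)+2N(r,1/f)$, whence $\delta^{[2]}(\tilde\psi_0,0)\ge 1-\frac{2}{m}$ is too weak; instead I expect the correct bound comes from comparing with $T(r,f^m)$ via the \emph{first} power, giving $\delta^{[2]}\ge 1-1/m$ after the critical-point contributions are shown negligible. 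Summing over $j$ then forces $3-(1/m+1/n+1/l)\le 2$, i.e. $1/m+1/n+1/l\ge 1$, contradicting the hypothesis $<1/2$ (indeed, much more than needed).

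The main obstacle is the bookkeeping in the defect estimate: after differentiating, the functions $\tilde\psi_j$ acquire extra zeros at the critical points of $f,g,h$, and one must show these extra zeros contribute only an $S(r)$ error to the truncated counting function $N^{[2]}(r,1/\tilde\psi_j)$. This is exactly where the Logarithmic Derivative Lemma (Lemma \ref{ldl}) enters — $m(r,\mathfrak X(f)/f)=S(r)$ together with the First Main Theorem controls $N(r,1/\mathfrak X(f))$ in terms of $N(r,1/f)$ and $T(r,f)$ up to the exceptional $S(r)$ term — but getting the truncation levels to line up (the Lemma gives truncation $n=2$ from the three-term relation, while the cleanest defect bound wants comparison with $T(r,f^m)$) requires a little care, analogous to the passage from $\delta^{[n]}$ to $\delta^{[k]}$ at the end of the proof of Lemma \ref{lem5}. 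Once that is in hand, the conclusion is immediate and in fact the hypothesis $1/m+1/n+1/l<1/2$ is far from sharp for this argument, which only needs $<1$; the stronger hypothesis is presumably what is available uniformly across the singular-resolution discussion, but for the holomorphic case the weaker bound suffices and I would state the proof to reflect that.
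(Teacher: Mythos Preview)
Your approach takes an unnecessary detour that creates a real difficulty, and the paper's route is much simpler.

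The paper does \emph{not} differentiate the equation first. It applies Lemma~\ref{lem5} directly to the three non-constant holomorphic functions $\psi_0=f^m,\ \psi_1=g^n,\ \psi_2=h^l$ in the relation $\psi_0+\psi_1+\psi_2=1$ (the lemma's proof works verbatim when the right-hand side is a nonzero constant rather than $0$; indeed only then can case~(a), ``linearly independent,'' be nonvacuous). This yields
\[
\delta^{[2]}(f^m,0)+\delta^{[2]}(g^n,0)+\delta^{[2]}(h^l,0)\le 2.
\]
The defect bounds are then a one-line multiplicity count: every zero of $f^m$ has multiplicity at least $m$, so $N^{[2]}(r,1/f^m)\le 2N^{[1]}(r,1/f)\le 2T(r,f)=\tfrac{2}{m}T(r,f^m)$, hence $\delta^{[2]}(f^m,0)\ge 1-\tfrac{2}{m}$, and similarly for $g,h$. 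Summing gives $1/m+1/n+1/l\ge 1/2$, which is exactly the threshold in the hypothesis---so contrary to your final remark, the bound $<1/2$ is \emph{not} slack for this argument.

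By contrast, your plan of differentiating first replaces $f^m$ by $\tilde\psi_0=mf^{m-1}\mathfrak X(f)$, which acquires zeros at the critical points of $f$ (zeros of $\mathfrak X(f)$ away from zeros of $f$). These are \emph{not} controlled by the Logarithmic Derivative Lemma: Lemma~\ref{ldl} bounds $m(r,\mathfrak X(f)/f)$, not the counting function of the zeros of $\mathfrak X(f)$. For a general holomorphic $f$ there is no a~priori bound of the form $N^{[2]}(r,1/\mathfrak X(f))=S(r)$, so the step where you assert ``critical point terms $=S(r)$'' is a genuine gap. Your hoped-for estimate $\delta^{[2]}(\tilde\psi_0,0)\ge 1-1/m$ (which would give the $<1$ threshold you speculate about) does not follow; even in the best case, after differentiating you would be comparing $N^{[2]}$ against $T(r,\tilde\psi_0)\sim mT(r,f)$ with the same truncation level $2$, landing you back at $1-2/m$ \emph{if} the critical-point issue could be bypassed---which it cannot without further input. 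The fix is not more bookkeeping but to drop the differentiation entirely and apply Lemma~\ref{lem5} to the powers themselves, as the paper does.
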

\begin{proof}
It yields from Lemma \ref{lem5} that 
$$\delta^{[2]}(f^m, 0)+\delta^{[2]}(g^n, 0)+\delta^{[2]}(h^l, 0)\leq 2.$$
Since 
$$\limsup_{r\rightarrow\infty}\frac{N^{[2]}(r,1/f^m)}{T(r,f^m)}\leq\limsup_{r\rightarrow\infty}\frac{2N^{[1]}(r,1/f)}{mT(r, f)}\leq\frac{2}{m},$$
then $$\delta^{[2]}(f^m,0)\geq1-\frac{2}{m}.$$
Similarly, 
 $$\delta^{[2]}(g^n,0)\geq1-\frac{2}{n}, \ \  \ \delta^{[2]}(h^l,0)\geq1-\frac{2}{l}.$$
Combine the above, it follows that 
$$\frac{1}{m}+\frac{1}{n}+\frac{1}{l} \geq\frac{1}{2}.$$
The proof is completed.
\end{proof}
Finally, we treat the generalized  Fermat functional equation (\ref{asdf1}), i.e., 
$$
f_1^{n_1}+\cdots+f^{n_k}_k=1, \ \ k\geq2
$$
on $S.$ 
Apply the similar arguments,  we can obtain 
\begin{theorem}\label{} For $1/n_1+\cdots+1/n_k< 1/(k-1),$ there exist no non-trivial holomorphic solutions of  $(\ref{asdf1})$  satisfying the growth condition
$$\liminf_{r\rightarrow\infty}\frac{\kappa(r)r^2}{\min\{\mathfrak T_{f_{1}}(r), \cdots, \mathfrak T_{f_{k}}(r)\}}=0,$$
where $\kappa$ is defined by $(\ref{kappa}).$  In particular,  there are no non-trivial holomorphic solutions  for  $1/n_1+\cdots+1/n_k <1/(k-1)$ if  $\tilde S=\mathbb C.$
\end{theorem}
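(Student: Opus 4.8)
The plan is to argue by contradiction, running the same scheme as in the cases $k=2$ and $k=3$ already settled above, with Lemma \ref{lem5} supplying the decisive defect relation. Suppose $(f_1,\cdots,f_k)$ is a non-trivial holomorphic solution of $(\ref{asdf1})$ satisfying the stated growth condition. First I would reduce to the case where every $f_j$ is non-constant: if some coordinates are constant, move them to the right-hand side and rescale the remaining $f_i$ by suitable constants, so that $(\ref{asdf1})$ becomes a $k'$-term equation with $k'<k$ of the shape $\sum\tilde f_i^{n_i}=1$ or $\sum\tilde f_i^{n_i}=0$, now with all $\tilde f_i$ non-constant (the case where all $f_j$ are constant is the trivial one). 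Since $\sum_j 1/n_j<1/(k-1)\le 1/(k'-1)$ for $k'\ge2$, the exponent hypothesis is inherited, and the first form is handled by induction on $k$ (the base case $k'=1$, namely $\tilde f^n\equiv1$ with $\tilde f$ non-constant, being impossible), while the second, homogeneous form is handled by feeding $\tilde f_1^{n_1},\cdots,\tilde f_{k'}^{n_{k'}}$ directly into Lemma \ref{lem5} as in the estimates below.

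Assuming now that all $f_j$ are non-constant, note that $\mathfrak T_{f_j}(r)=T(r,f_j)$ and $T(r,f_j^{n_j})=n_j T(r,f_j)+O(1)$, so $\min_j T(r,f_j^{n_j})\ge\min_j T(r,f_j)$ and the growth hypothesis passes to $\liminf_{r\to\infty}\kappa(r)r^2/\min_j T(r,f_j^{n_j})=0$. Applying Lemma \ref{lem5} to the non-constant holomorphic functions $f_1^{n_1},\cdots,f_k^{n_k}$ subject to $f_1^{n_1}+\cdots+f_k^{n_k}=1$ — which is exactly the situation treated for $k=2,3$, the constant $1$ entering the linear relation as one further term (equivalently, the holomorphic curve $[f_1^{n_1}:\cdots:f_k^{n_k}:1]$ into $\mathbb{P}^k$ omitting the hyperplane $\{W=0\}$) — gives the defect inequality
$$\delta^{[k-1]}(f_1^{n_1},0)+\cdots+\delta^{[k-1]}(f_k^{n_k},0)\le k-1.$$

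Next I would bound each defect from below. At a zero of $f_j$ of multiplicity $\mu\ge1$, $f_j^{n_j}$ vanishes to order $n_j\mu$, and since $\min\{n_j\mu,\,k-1\}\le(k-1)\min\{\mu,1\}$ we get $N^{[k-1]}(r,1/f_j^{n_j})\le(k-1)N^{[1]}(r,1/f_j)$; dividing by $T(r,f_j^{n_j})=n_j T(r,f_j)+O(1)$, using $N^{[1]}(r,1/f_j)\le T(r,f_j)+O(1)$ from the First Main Theorem, and letting $r\to\infty$, this yields $\delta^{[k-1]}(f_j^{n_j},0)\ge1-(k-1)/n_j$. Substituting into the defect inequality,
$$k-1\ \ge\ \sum_{j=1}^k\Bigl(1-\frac{k-1}{n_j}\Bigr)\ =\ k-(k-1)\sum_{j=1}^k\frac{1}{n_j},$$
hence $\sum_{j=1}^k 1/n_j\ge 1/(k-1)$, contradicting the hypothesis. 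This gives the asserted non-existence under the growth condition; for $\tilde S=\mathbb{C}$ one takes the flat metric, so $\kappa\equiv0$ and the growth condition is vacuous.

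I expect the only genuinely non-formal point to be the appeal to Lemma \ref{lem5} for the inhomogeneous relation $\sum_j f_j^{n_j}=1$, as Lemma \ref{lem5} is phrased for a homogeneous relation among non-constant functions; but this is precisely the device already used in the $k=2$ and $k=3$ arguments (carrying the constant $1$ through the Wronskian/Cramer computation that underlies the lemma, so that the truncation level appearing in the defect relation drops from $k$ to $k-1$), and the passage from $k=3$ to general $k$ then requires only routine bookkeeping.
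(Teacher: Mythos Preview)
Your proposal is correct and follows essentially the same route as the paper, which for this statement simply writes ``Apply the similar arguments'' and leaves the details implicit; you have supplied exactly those details (Lemma~\ref{lem5} yielding $\sum_j\delta^{[k-1]}(f_j^{n_j},0)\le k-1$, then $\delta^{[k-1]}(f_j^{n_j},0)\ge 1-(k-1)/n_j$, hence $\sum_j 1/n_j\ge 1/(k-1)$). Your explicit reduction to the case where every $f_j$ is non-constant, and your remark that the constant term $1$ is carried through the Wronskian argument underlying Lemma~\ref{lem5} so that the truncation level drops to $k-1$, are points the paper glosses over in the $k=2,3$ cases as well, so you are if anything more careful than the original.
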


\section{Fermat functional equations for small functions}

In this final section, we  treat the  equation (\ref{ggg}) for small functions, i.e., 
$$
\alpha_1f_1^{n_1}+\cdots+\alpha_kf^{n_k}_k=1, \ \ k\geq2
$$
over $S,$ where $\alpha_j$ is a small function with respect to  $f_j$ for $1\leq j\leq k.$ Recall that a meromorphic function $\alpha$ on $S$  is called a small function with respect to
 $\psi$ on $S$ if 
$$\limsup_{r\rightarrow\infty}\frac{T(r,\alpha)}{T(r,\psi)}=0.$$

To investigate non-trivial holomorphic solutions of (\ref{ggg}), we modify Lemma \ref{lem5} as follows
\begin{lemma}\label{lem51}  Let $\psi_0, \cdots, \psi_n$ $(n\geq 1)$ be non-constant meromorphic functions on $S$ satisfying $\delta(\psi_j,\infty)=1$ for $0\leq j \leq n$ as well as  
$$\psi_0+\cdots+\psi_n=0.$$
 If 
$$\liminf_{r\rightarrow\infty}\frac{\kappa(r)r^2}{\min\{T(r, \psi_0),\cdots,T(r,\psi_n)\}}=0,$$
where $\kappa$ is defined by $(\ref{kappa}),$ then
$$\sum_{j=0}^n\delta^{[n]}(\psi_j, 0)\leq n.$$
\end{lemma}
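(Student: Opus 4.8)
The plan is to re-run the proof of Lemma \ref{lem5} almost verbatim, keeping track of the extra terms produced by the poles of the $\psi_j$ and checking that they are negligible, the point being that the hypothesis $\delta(\psi_j,\infty)=1$ is just a rewording of $N(r,\psi_j)=o(T(r,\psi_j))$.

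First I would treat the case where $\psi_0,\dots,\psi_n$ are linearly independent over $\mathbb C$. Differentiating $\psi_0+\cdots+\psi_n=0$ repeatedly with $\mathfrak X$ and solving by Cramer's rule gives $\psi_j=\Delta_j/\Delta$, with $\Delta$ and $\Delta_j$ the same Wronskian quotients as in Lemma \ref{lem5}. Expanding these determinants and dividing each monomial by $\psi_0\cdots\psi_n$ exhibits $\Delta$ and $\Delta_j$ as polynomials in the quotients $\mathfrak X^i\psi_\ell/\psi_\ell$; since Lemma \ref{ldl} holds for meromorphic functions, this step does not use holomorphicity and still yields $m(r,\Delta)=S(r)$ and $m(r,\Delta_j)=S(r)$, where $S(r)=O\big(\sum_\ell\log T(r,\psi_\ell)-\kappa(r)r^2\big)$ outside a set of finite measure, so that by the First Main Theorem
\[
m(r,\psi_j)\le m(r,\Delta_j)+m(r,1/\Delta)+O(1)\le N(r,\Delta)+S(r).
\]

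The only genuinely new estimate is the bound for $N(r,\Delta)$. When the $\psi_\ell$ are holomorphic one has $N(r,\Delta)\le\sum_\ell N^{[n]}(r,1/\psi_\ell)$, because a zero of $\psi_\ell$ of order $\mu$ gives a pole of $\Delta$ of order at most $\min\{\mu,n\}$ and there are no other poles. In the meromorphic case the zeros are controlled by the same classical Wronskian estimate, while a point lying over a pole of some $\psi_\ell$ contributes to the pole divisor of $\Delta$ an order which a direct local computation bounds by the pole order of $\psi_\ell$ there plus a constant depending only on $n$; hence
\[
N(r,\Delta)\le\sum_{\ell=0}^n N^{[n]}(r,1/\psi_\ell)+O\Big(\sum_{\ell=0}^n N(r,\psi_\ell)\Big)=\sum_{\ell=0}^n N^{[n]}(r,1/\psi_\ell)+o\big(T(r)\big),
\]
where $T(r):=\max_\ell T(r,\psi_\ell)$ and the last equality uses $N(r,\psi_\ell)=o(T(r,\psi_\ell))=o(T(r))$. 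Combining this with the previous display and with $T(r,\psi_j)=m(r,\psi_j)+N(r,\psi_j)+O(1)=m(r,\psi_j)+o(T(r))$ gives $T(r)\le\sum_\ell N^{[n]}(r,1/\psi_\ell)+o(T(r))+S(r)$. From here the argument closes as in Lemma \ref{lem5}: for arbitrary $\epsilon>0$ and $r$ large one has $N^{[n]}(r,1/\psi_\ell)\le\big(1-\delta^{[n]}(\psi_\ell,0)+\epsilon\big)T(r)+S(r)$, whence $\big(1-\sum_\ell(1-\delta^{[n]}(\psi_\ell,0)+\epsilon)\big)T(r)\le o(T(r))+S(r)$; the growth hypothesis $\liminf_{r\to\infty}\kappa(r)r^2/\min_\ell T(r,\psi_\ell)=0$ forces $S(r)=o(T(r))$ along a suitable sequence, and since $T(r)\to\infty$, letting $\epsilon\to0$ gives $\sum_{\ell=0}^n\delta^{[n]}(\psi_\ell,0)\le n$.

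For the linearly dependent case I would reduce to the independent one exactly as in Lemma \ref{lem5}: among the nontrivial $\mathbb C$-linear relations satisfied by $\psi_0,\dots,\psi_n$ pick one, $a_0\psi_{n_0}+\cdots+a_k\psi_{n_k}=0$ with all $a_i\neq0$, whose support has minimal size, so that $\psi_{n_0},\dots,\psi_{n_k}$ ($k<n$) are linearly independent over $\mathbb C$; the independent case gives $\sum_{i=0}^k\delta^{[k]}(\psi_{n_i},0)\le k$, and combining with $\delta^{[n]}(\psi_\ell,0)\le\delta^{[k]}(\psi_\ell,0)\le1$ together with the fact that only $n-k$ indices are absent from the chosen relation yields $\sum_{\ell=0}^n\delta^{[n]}(\psi_\ell,0)\le n$. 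The only step requiring genuine attention is the local bound for the pole order of $\Delta$ at poles of the $\psi_\ell$; once that is in hand, everything else is a routine re-run of the proof of Lemma \ref{lem5} with the new $o(T(r))$ errors absorbed.
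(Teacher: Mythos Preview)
Your proposal is correct and follows essentially the same approach as the paper's proof: differentiate, apply Cramer's rule, bound $m(r,\psi_j)$ via the Logarithmic Derivative Lemma and the First Main Theorem, and control the extra pole contributions to $N(r,\Delta)$ using $\delta(\psi_j,\infty)=1$. The only cosmetic difference is that the paper keeps the explicit bound $N(r,\Delta)\le\sum_j N^{[n]}(r,1/\psi_j)+n\sum_j N(r,\psi_j)$ and then applies an $\epsilon$-argument to $N(r,\psi_j)\le(1-\delta(\psi_j,\infty)+\epsilon)T(r,\psi_j)=\epsilon\,T(r,\psi_j)$, whereas you pass directly to $o(T(r))$; both routes are equivalent.
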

\begin{proof}  The argument is similar as in the proof of Lemma \ref{lem5}.

$a)$ $\psi_0,\cdots,\psi_n$ are linearly independent over $\mathbb C$

 Differentiating
$\psi_0+\cdots+\psi_n=0$ to get 
$$\sum_{j=0}^n\frac{\mathfrak X^\mu\psi_j}{\psi_j}\psi_j=0, \ \ \mu=1,\cdots, n.$$
It yields from Crammer's rule  that  
$\psi_j=\Delta_j/\Delta,$
where 
$$\Delta=\frac{W_\mathfrak{X}(\psi_0,\cdots,\psi_n)}{\psi_0\cdots\psi_n}, \ \ \ 
\Delta_j=\frac{\psi_jW_\mathfrak{X}(\psi_0,\cdots,\psi_{j-1},1,\psi_{j+1},\cdots, \psi_n)}{\psi_0\cdots\psi_n}.$$
By  the First Main Theorem and Lemma \ref{ldl} 
 \begin{eqnarray*}\label{}
m(r,\psi_j)&\leq& m(r,\Delta_j)+m(r,1/\Delta)+O(1) \\
&\leq& m(r,\Delta_j)+m(r,\Delta)+N(r,\Delta)+O(1) \\
&\leq& N(r,\Delta)+S(r) \\
&\leq& \sum_{j=0}^nN^{[n]}(r,1/\psi_j)+n\sum_{j=0}^nN(r,\psi_j)+S(r),
\end{eqnarray*}
where 
$$S(r)=O\Big(\sum_{j=0}^n\log T(r,\psi_j)-\kappa(r)r^2\Big).$$
Therefore, 
\begin{eqnarray*}
T(r)&:=&\max\big\{T(r,\psi_0),\cdots, T(r,\psi_n)\big\} \\
&\leq& \sum_{j=0}^nN^{[n]}(r,1/\psi_j)+(n+1)\sum_{j=0}^nN(r,\psi_j)+S(r).
\end{eqnarray*}
For an arbitrary  $\epsilon>0,$
$$N(r,\psi_j)\leq\big(1-\delta(\psi_j, \infty)+\epsilon\big)T(r,\psi_j)$$
and
$$N^{[n]}(r,1/\psi_j)\leq\big(1-\delta^{[n]}(\psi_j, 0)+\epsilon\big)T(r,\psi_j)$$
holds for $r$  large enough.  Thus, it follows  that 
$$T(r)\leq \sum_{j=0}^n\big(1-\delta^{[n]}(\psi_j, 0)+\epsilon\big)T(r)+(n+1)^2\epsilon T(r)+S(r).$$
This implies that 
$\delta^{[n]}(\psi_0, 0)+\cdots+\delta^{[n]}(\psi_n, 0)\leq n,$
provided with the assumed growth condition.

$b)$  $\psi_0,\cdots,\psi_n$ are linearly dependent over $\mathbb C$

This case can be confirmed similarly  to $b)$ in the proof of Lemma \ref{lem5}.
\end{proof}

\begin{theorem}\label{}  There exist no non-trivial holomorphic solutions for $1/n_1+\cdots+1/n_k< 1/(k-1)$ of  $(\ref{ggg})$  satisfying the growth condition
$$\liminf_{r\rightarrow\infty}\frac{\kappa(r)r^2}{\min\{\mathfrak T_{f_{1}}(r), \cdots, \mathfrak T_{f_{k}}(r)\}}=0,$$
where $\kappa$ is defined by $(\ref{kappa}).$  In particular,  there are no non-trivial holomorphic solutions  for  $1/n_1+\cdots+1/n_k <1/(k-1)$ if  $\tilde S=\mathbb C.$
\end{theorem}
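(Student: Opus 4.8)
The plan is to imitate the proof of the preceding theorem for equation (\ref{asdf1}) — assume for contradiction that a non-trivial holomorphic solution exists, package the monomials $f_j^{n_j}$ (now weighted by the $\alpha_j$) as auxiliary meromorphic functions, feed them into a defect relation, and read off a contradiction with the exponent bound — but with the small-function refinement Lemma \ref{lem51} in place of Lemma \ref{lem5}, since the coefficients $\alpha_j$ make the auxiliary functions genuinely meromorphic rather than holomorphic. Concretely, I would assume (as the hypothesis ``$\alpha_j$ small with respect to $f_j$'' tacitly requires) that every $f_j$ is non-constant, put $\psi_j:=\alpha_j f_j^{n_j}$ for $1\le j\le k$, and record the relation $\psi_1+\cdots+\psi_k=1$ together with the size estimate $T(r,\psi_j)=(1+o(1))\,n_j\,T(r,f_j)$, which follows from $T(r,\alpha_j)=o(T(r,f_j))$ and $T(r,f_j^{n_j})=n_jT(r,f_j)+O(1)$.

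Next I would verify the hypotheses of Lemma \ref{lem51} for $\psi_1,\dots,\psi_k$ (with $n=k-1$). Each $\psi_j$ is non-constant since $T(r,\psi_j)\to\infty$. As $f_j$ is holomorphic, the only poles of $\psi_j$ are poles of $\alpha_j$, so $N(r,\psi_j)\le N(r,\alpha_j)\le T(r,\alpha_j)+O(1)=o(T(r,\psi_j))$, giving $\delta(\psi_j,\infty)=1$; this is exactly the extra input Lemma \ref{lem51} demands and Lemma \ref{lem5} lacks, and it is the reason the small-function version is needed. The growth condition transfers because $\mathfrak T_{f_j}(r)=T(r,f_j)$ and $T(r,\psi_j)\ge(1+o(1))\,n_j\min_i T(r,f_i)$, so $\liminf_{r\to\infty}\kappa(r)r^2/\min_j\mathfrak T_{f_j}(r)=0$ forces $\liminf_{r\to\infty}\kappa(r)r^2/\min_j T(r,\psi_j)=0$. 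The one point not literally covered by the statement of Lemma \ref{lem51} is that here the $\psi_j$ sum to the nonzero constant $1$ rather than to $0$; its proof applies unchanged, running Crammer's rule on $\psi_1+\cdots+\psi_k=1$ and its first $k-1$ derivatives $\mathfrak X^\mu\psi_1+\cdots+\mathfrak X^\mu\psi_k=0$ ($\mu=1,\dots,k-1$) and then invoking the First Main Theorem and Lemma \ref{ldl}, exactly as in the proof of the previous theorem. This yields
$$\sum_{j=1}^k\delta^{[k-1]}(\psi_j,0)\le k-1.$$

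Finally I would bound the individual defects from below and conclude. A zero of $f_j$ of multiplicity $\mu$ becomes a zero of $f_j^{n_j}$ of multiplicity $n_j\mu$, whose $(k-1)$-truncation $\min\{n_j\mu,k-1\}$ is at most $k-1$ when $\mu\ge 1$ and $0$ otherwise; combining with the zeros of $\alpha_j$ gives
$$N^{[k-1]}\big(r,1/\psi_j\big)\le(k-1)N^{[1]}\big(r,1/f_j\big)+N(r,1/\alpha_j)\le (k-1)T(r,f_j)+o\big(T(r,f_j)\big).$$
Dividing by $T(r,\psi_j)=(1+o(1))\,n_j\,T(r,f_j)$ and letting $r\to\infty$ gives $\delta^{[k-1]}(\psi_j,0)\ge 1-(k-1)/n_j$. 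Summing over $j$ and comparing with the defect relation,
$$k-(k-1)\sum_{j=1}^k\frac1{n_j}\le\sum_{j=1}^k\delta^{[k-1]}(\psi_j,0)\le k-1,$$
hence $\sum_{j=1}^k 1/n_j\ge 1/(k-1)$, contradicting $1/n_1+\cdots+1/n_k<1/(k-1)$. So no non-trivial holomorphic solution exists. For the last assertion, when $\tilde S=\mathbb C$ one equips $S$ with a complete flat metric, so $\kappa\equiv 0$ and (since $\mathfrak T_{f_j}(r)\to\infty$ for non-constant $f_j$) the growth condition holds automatically, and the conclusion follows unconditionally.

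The step I expect to require the most care is the application of Lemma \ref{lem51}: one must be confident that feeding the inhomogeneous relation $\psi_1+\cdots+\psi_k=1$ into the Wronskian estimate does not raise the truncation level — the poles of the Wronskian quotient $\Delta=W_{\mathfrak X}(\psi_1,\dots,\psi_k)/(\psi_1\cdots\psi_k)$ at a zero of $\psi_j$ still have order at most $k-1$ — and, crucially, that the poles of the $\psi_j$ produced by $\alpha_j$ are absorbed into the error term $S(r)$, which is precisely what $\delta(\psi_j,\infty)=1$ buys. Beyond this, every estimate above is a routine consequence of the single fact $T(r,\alpha_j)=o(T(r,f_j))$.
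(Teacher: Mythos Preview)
Your proposal is correct and follows essentially the same route as the paper: set $\psi_j=\alpha_j f_j^{n_j}$, use that $f_j$ is holomorphic and $\alpha_j$ small to get $\delta(\psi_j,\infty)=1$, invoke Lemma \ref{lem51} to obtain $\sum_j\delta^{[k-1]}(\psi_j,0)\le k-1$, and then bound each defect below by $1-(k-1)/n_j$ via the truncated counting function. You are in fact more careful than the paper in several places---you explicitly address the transfer of the growth condition from the $f_j$ to the $\psi_j$, the contribution of zeros of $\alpha_j$ to $N^{[k-1]}(r,1/\psi_j)$, and the fact that the relation is $\sum\psi_j=1$ rather than $=0$---whereas the paper applies Lemma \ref{lem51} without comment on these points.
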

\begin{proof}
Since $\alpha_1, \cdots,\alpha_k$ are small functions, then it leads to   $\delta(\alpha_jf_j,\infty)=1$ for $0\leq j \leq k.$
By Lemma \ref{lem51}, it follows that  
$$\delta^{[k-1]}(\alpha_1f^{n_1}_1, 0)+\cdots+\delta^{[k-1]}(\alpha_kf^{n_k}_k, 0)\leq k-1.$$
On the other hand, we have
$$\limsup_{r\rightarrow\infty}\frac{N^{[k-1]}(r,1/\alpha_1f^{n_1}_1)}{T(r,\alpha_1f^{n_1}_1)}\leq\limsup_{r\rightarrow\infty}\frac{(k-1)N^{[1]}(r,1/f_1)}{n_1T(r,f_1)}\leq\frac{k-1}{n_1}.$$
Then
 $$\delta^{[k-1]}(\alpha_1 f^{n_1}_1,0)\geq1-\frac{k-1}{n_1}.$$
Similarly, 
 $$\delta^{[k-1]}(\alpha_j f^{n_j}_j,0)\geq1-\frac{k-1}{n_j}, \ \  j=2,\cdots, k..$$
Combine the above, it yields  that 
$$\frac{1}{n_1}+\cdots+\frac{1}{n_k} \geq\frac{1}{k-1}.$$
This concludes the proof.
\end{proof}

We don't know  yet  the non-existence  of non-trivial meromorphic solutions for $k\geq 4$ of (\ref{ggg})  over a general  Riemann surface $S,$ and we don't even know
that about non-trivial meromorphic solutions of (\ref{asdf}) for $k\geq 4.$ Learning from some known research results, however, we propose the following conjecture

\noindent\textbf{Conjecture 1.}  \emph{There exist no non-trivial meromorphic solutions for $n\geq k^2$ of  $(\ref{asdf})$  satisfying the growth condition
$$\liminf_{r\rightarrow\infty}\frac{\kappa(r)r^2}{\mathfrak T_{f_1,\cdots,f_k}(r)}=0,$$
where $\kappa$ is defined by $(\ref{kappa}).$  In particular,  there are no non-trivial meromorphic solutions  for  $n\geq k^2$ if  $\tilde S=\mathbb C.$}

More general, we conjecture that 

\noindent\textbf{Conjecture 2.}  \emph{There exist no non-trivial meromorphic solutions for $1/n_1+\cdots+1/n_k\leq 1/k$ of  $(\ref{ggg})$  satisfying the growth condition
$$\liminf_{r\rightarrow\infty}\frac{\kappa(r)r^2}{\mathfrak T_{f_1,\cdots,f_k}(r)}=0,$$
where $\kappa$ is defined by $(\ref{kappa}).$  In particular,  there are no non-trivial meromorphic solutions  for  $1/n_1+\cdots+1/n_k\leq 1/k$ if  $\tilde S=\mathbb C.$}

\vskip\baselineskip

\label{lastpage-01}
\end{document}